\newcommand{\R}{\mathbb{R}}
\newcommand{\E}{\mathbb{E}}
\newcommand{\N}{\mathbb{N}}
\newcommand{\LCM}{{\rm lcm}}
\newcommand{\GCD}{{\rm gcd}}
\newcommand{\Cov}{{\rm Cov}}
\newcommand{\Var}{{\rm Var}\,}
\renewcommand{\P}{\mathbb{P}}
\newcommand{\1}{\mathbbm{1}}
\renewcommand {\leq}{\leqslant}
\renewcommand {\geq}{\geqslant}
\newcommand{\mmp}{\mathbb{P}}
\newcommand{\me}{\mathbb{E}}
\newcommand{\mr}{\mathbb{R}}
\newcommand{\mn}{\mathbb{N}}
\newcommand{\eqdistr}{\stackrel{{\rm d}}{=}}
\newcommand{\todistr}{\overset{{\rm d}}{\underset{n\to\infty}\longrightarrow}}
\newcommand{\todistri}{\overset{{\rm d},\infty}{\underset{n\to\infty}\longrightarrow}}
\newcommand{\toprobab}{\overset{{\mmp}}{\underset{n\to\infty}\longrightarrow}}
\newcommand{\tofd}{\overset{{\rm f.d.d.}}{\underset{n\to\infty}\longrightarrow}}
\theoremstyle{plain}
\newtheorem{theorem}{Theorem}[section]
\newtheorem{lemma}[theorem]{Lemma}
\newtheorem{proposition}[theorem]{Proposition}
\theoremstyle{definition}
\theoremstyle{remark}
\begin{document}

\title[A Brownian limit for the LCM of a random {\scriptsize $m$}-tuple of integers]{A Brownian weak limit for the least common multiple of a random {\normalsize $m$}-tuple of integers}

\author{Dariusz Buraczewski}
\address{Dariusz Buraczewski, Mathematical Institute University of Wroclaw, Pl. Grunwaldzki 2/4
50-384 Wroclaw, Poland}\email{dbura@math.uni.wroc.pl}

\author{Alexander Iksanov}
\address{Alexander Iksanov, Faculty of Computer Science and Cybernetics, Taras Shev\-chen\-ko National University of Kyiv, 01601 Kyiv, Ukraine}
\email{iksan@univ.kiev.ua}

\author{Alexander Marynych}
\address{Alexander Marynych, Faculty of Computer Science and Cybernetics, Taras Shev\-chen\-ko National University of Kyiv, 01601 Kyiv, Ukraine}
\email{marynych@unicyb.kiev.ua}

\begin{abstract}
Let $B_n(m)$ be a set picked uniformly at random among all $m$-elements subsets of $\{1,2,\ldots,n\}$. We provide a pathwise construction of the collection 
$(B_n(m))_{1\leq m\leq n}$ and prove that the logarithm of the least common multiple of the integers in $(B_n(\lfloor mt\rfloor))_{t\geq 0}$, properly centered and normalized, converges to a Brownian motion when both $m,n$ tend to infinity. Our approach consists of two steps. First, we show that the aforementioned result is a consequence of a multidimensional central limit theorem for the logarithm of the least common multiple of $m$ independent random variables having uniform distribution on $\{1,2,\ldots,n\}$. Second, we offer a novel approximation of the least common multiple of a random sample by the product of the elements of the sample with neglected multiplicities in their prime decompositions.
\end{abstract}

\keywords{Brownian motion, central limit theorem, coupon collector's problem, least common multiple}

\subjclass[2010]{Primary: 11K65, 60F05;  Secondary: 11A05}

\maketitle

\section{Introduction}

The analysis of divisibility properties of {\it random integers} is a classical problem in the probabilistic number theory going back to pioneer works by Dirichlet~\cite{Dirichlet:1849} and  Ces\`aro~\cite{Cesaro:1884,Cesaro:1885,Cesaro:1885b}. Among other results, Ces\`aro in~\cite{Cesaro:1885b} has proved that the expected least common multiple ($\LCM$) of two integers picked uniformly at random from the set $[n]:=\{1,2,\ldots,n\}$ is asymptotically, as $n\to\infty$, equal to their product multiplied by the constant $\zeta(3)/\zeta(2)$, where $\zeta$ is the Riemann zeta function. In the modern probabilistic language, this result can be stated as
\begin{equation}\label{eq:cesaro_seminal_result}
\lim_{n\to\infty}\E \left(\frac{\LCM(U_1^{(n)},U_2^{(n)})}{U_1^{(n)}U_2^{(n)}}\right)=\lim_{n\to\infty}\E \left(\frac{1}{\GCD(U_1^{(n)},U_2^{(n)})}\right)=\frac{\zeta(3)}{\zeta(2)},
\end{equation}
where $U_1^{(n)},U_2^{(n)}$ are independent copies of a random variable $U^{(n)}$ with distribution
\begin{equation}\label{eq:uniform_distribution}
\P\{U^{(n)}=k\}=1/n,\quad k\in [n],
\end{equation}
and $\GCD$ denotes the greatest common divisor.

There are various ways to generalize \eqref{eq:cesaro_seminal_result} and some of them have received attention in probabilistic as well as number theoretic literature. For example, one can ask about the asymptotic distribution (instead of the asymptotic average) of sequences of random variables
$$
\left(\frac{\LCM(U_1^{(n)},U_2^{(n)},\ldots,U_{m}^{(n)})}{U_1^{(n)}U_2^{(n)}\cdots U_m^{(n)}}\right)_{n\in\N}\quad\text{or}\quad \left(\frac{\LCM(U_1^{(n)},U_2^{(n)},\ldots,U_{m}^{(n)})}{n^m}\right)_{n\in\N}
$$
where $m\in\N$ is a fixed integer. This problem has been solved in \cite{BosMarRas:19}, see also \cite{Fernandez+Fernandez:2013,Hilberdink+Toth:2016}, by showing that both fractions converge in distribution to proper random variables, see Eq.~(15) and (16) in \cite{BosMarRas:19} and also Proposition \ref{prop:fixed_m} below. Another way towards generalization of \eqref{eq:cesaro_seminal_result} is to replace the random set $\{U_1^{(n)},U_2^{(n)},\ldots,U_{m}^{(n)}\}$ by a more sophisticated (and/or with cardinality depending on $n$) random subset of $[n]$. In \cite{CillRueSarka:14} the following model has been proposed: fix $\theta\in(0,1)$, remove every element $j\in [n]$, independently of the other elements, with probability $1-\theta$ and denote the remaining subset by $A_n(\theta)$. This model  has been intensively analyzed in \cite{AlsKabMar:19}, see also \cite{Sanna:2020}, where the authors proved various limit theorems, including a strong law of large numbers for $L_n(\theta):=\log \LCM(A_n(\theta))$, functional limit theorems  for the process $(L_{\lfloor nt\rfloor}(\theta))_{t\in [0,\,1]}$ and Poisson limit theorems when $\theta=\theta_n$ is either close to zero or one.

Another class of examples is related to the theory of random permutations. Let $\mathfrak{S}_n$ be the symmetric group of all permutations of $[n]$. If $\sigma\in\mathfrak{S}_n$ is a random permutation picked according to some probability distribution on $\mathfrak{S}_n$, then the
 collection of cycle lengths of $\sigma$ is a random subset of $[n]$ and its least common multiple is equal to the order ${\rm ord}(\sigma)$ of the permutation $\sigma$. This object has been studied for Ewens' permutations (including the case of the uniform distribution on $\mathfrak{S}_n$), see \cite{Arratia+Tavare:1992}, and also for more general probability measures on $\mathfrak{S}_n$, see \cite{Gnedin+Iksanov+Marynych:2012,Storm+Zeindler:2015}. For example, the famous Erd\H{o}s--Tur\'{a}n law \cite{Erdos+Turan:1967} states that $\log {\rm ord}(\sigma)$ is asymptotically normal, as $n\to\infty$, if $\sigma$ is picked uniformly at random from $\mathfrak{S}_n$.

In the present paper we investigate another model of choosing a random subset of $[n]$. Let $B_n(m)$ be a subset chosen uniformly at random among all subsets of $[n]$ containing exactly $m$ elements. To the best of our knowledge, the asymptotic behavior of 
$\log\LCM(B_n(m))$ has only been (partly) investigated in \cite{CillRueSarka:14}. Theorem 1.2 therein states that for $m=m_n<n$,
$$
\frac{(1-m/n)\log \LCM(B_n(m))}{m\log(n/m)}\toprobab 1,
$$
where $\toprobab$ denotes convergence in probability. The purpose of our work
is to shed more light on this model. We provide here an explicit pathwise construction of the whole random collection $(B_n(m))_{1\leq m\leq n}$ and then prove new limit theorems for the properly normalized sequence of stochastic processes
$$
\Big(\log \LCM(B_n(\lfloor t m\rfloor ))\Big)_{t\geq 0}
$$
assuming that $m = o(n)$ and $m,n\to\infty$ (Theorem \ref{thm:main_result1}). As an important ingredient of our approach we also derive limit theorems for accompanying random processes
$$
\left(\log \LCM({U_1^{(n)},\ldots,U_{\lfloor m t\rfloor }^{(n)}})\right)_{t\geq0},
$$
where $(U_j^{(n)})_{j\geq 1}$ is a sequence of independent copies of a
random variable $U^{(n)}$ with distribution \eqref{eq:uniform_distribution}, and both $m,n\to\infty$ (Theorem \ref{thm:main_result2}).
The case of fixed $m$ follows essentially from the aforementioned results proved in \cite{BosMarRas:19}, however the case when $m_n\to \infty$ requires a completely different and novel approach, whilst existing techniques fail for this problem.

The paper has the following structure. In Section \ref{sec:coupon_collector} we provide an explicit construction of the collection
$(B_n(m))_{1\leq m\leq n}$ and relate it to the classical coupon collector problem. Section \ref{sec:coupon_collector} culminates with formula \eqref{eq:redefinition}, which defines all $B_n(m)$, $1\leq m\leq n$ on the same probability space via the accompanying sequence $(U_j^{(n)})_{j\geq 1}$ and an appropriate sequence of stopping times $(\tau^{(n)}(m))_{m\geq 1}$. The main results are presented in Subsections \ref{subsec:fixed_m} (Proposition \ref{prop:fixed_m} and Theorem \ref{thm:fixed_m}) and \ref{subsec:diverging_m} (Theorems \ref{thm:main_result1} and \ref{thm:main_result2}). Our strategy of the proof of the main results is discussed in Subsection \ref{subsec:strategy}. In Section \ref{sec:coupon_aux} some elementary facts  about the sequence $(\tau^{(n)}(m))_{m\geq 1}$, used later in the proofs, are collected. The proofs of Theorems \ref{thm:main_result1} and \ref{thm:main_result2} are scattered across Sections \ref{sec:central_moments}, \ref{sec:proof_1} and \ref{sec:proof_2}. The proof of Proposition \ref{prop:fixed_m} is given in the Appendix.

\section{Explicit construction of \texorpdfstring{$(B_n(m))_{1\leq m\leq n}$}{Bn(m)}}\label{sec:coupon_collector}
Let $(U_j^{(n)})_{j\in\N}$ be a sequence of independent copies of a random variable $U^{(n)}$ with the uniform distribution on $[n]$, that is, the distribution given by~\eqref{eq:uniform_distribution}. For $n\in\N$, $1\leq m\leq n$, put
\begin{multline*}
\tau^{(n)}(m):=\inf\{j\in\N:\text{there are exactly }m\text{ different values among } \\U_1^{(n)},U_2^{(n)},\ldots,U_j^{(n)}\}.
\end{multline*}
Note that $\P\{m\leq \tau^{(n)}(m)<\infty\}=1$. The variable $\tau^{(n)}(m)$ can be interpreted in terms of the well-known coupon collector problem. Assume that there are infinitely many coupons and that each coupon has one of $n$ different types. Then $\tau^{(n)}(m)$ is the number of coupons a collector needs to buy in order to have coupons of exactly $m$ different types. In particular, $\tau^{(n)}(n)$ is the number of purchases needed to acquire the entire collection. We put $\tau^{(n)}(0):=0$ and $\tau^{(n)}(m):=\tau^{(n)}(n)$ for $m>n$.

The following representation is immediate. For $i=1,\ldots,n$, let $X_{i,n}$ be the number of purchases needed to buy a coupon of a new type given that the number of different types in the current collection is $i-1$. It is straightforward to check that the variables $(X_{i,n})_{i=1,\ldots,n}$ are mutually independent and
$$
\P\{X_{i,n}=j\}=\left(\frac{i-1}{n}\right)^{j-1}\left(1-\frac{i-1}{n}\right),\quad j\in\N.
$$
Furthermore,
\begin{equation}\label{eq:tau_sum_representation}
\tau^{(n)}(m)=X_{1,n}+X_{2,n}+\cdots+X_{m,n},\quad 1\leq m\leq n.
\end{equation}

We have the following result. 

\begin{lemma}\label{lem:explicit_construction}
For every fixed $1\leq m\leq n$, the random set
$$
\{U_{1}^{(n)},U_{2}^{(n)},\ldots,U_{\tau^{(n)}(m)}^{(n)}\}
=\{U_{\tau^{(n)}(1)}^{(n)},U_{\tau^{(n)}(2)}^{(n)},\ldots,U_{\tau^{(n)}(m)}^{(n)}\}
$$
is uniformly distributed among all subsets of $[n]$ containing exactly $m$ elements.
\end{lemma}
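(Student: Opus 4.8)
The plan is to prove the claim by a symmetry (exchangeability) argument, showing that every $m$-element subset of $[n]$ is equally likely to be the set of distinct values collected up to the stopping time $\tau^{(n)}(m)$. First I would note the set identity asserted in the statement: the collection of distinct values among $U_1^{(n)},\ldots,U_{\tau^{(n)}(m)}^{(n)}$ is exactly $\{U_{\tau^{(n)}(1)}^{(n)},\ldots,U_{\tau^{(n)}(m)}^{(n)}\}$, since by definition each $\tau^{(n)}(i)$ is precisely the first time a value not seen before appears, so $U_{\tau^{(n)}(i)}^{(n)}$ is the $i$-th \emph{new} coupon type, and these $m$ types are pairwise distinct and exhaust all distinct values seen by time $\tau^{(n)}(m)$. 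This identity requires no probability, just the definitions.

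For the distributional part, I would condition on the sequence of \emph{first-occurrence order} of types. Concretely, let $S_n := \{U_1^{(n)},\dots,U_{\tau^{(n)}(m)}^{(n)}\}$ be the random $m$-set in question, and fix any particular $m$-subset $A=\{a_1,\dots,a_m\}\subseteq[n]$. The event $\{S_n=A\}$ is the disjoint union, over all bijections $\pi$ from $\{1,\dots,m\}$ to $A$, of the events ``the first $m$ distinct values to appear are $\pi(1),\pi(2),\dots,\pi(m)$ in that order.'' Because the $U_j^{(n)}$ are i.i.d.\ uniform on $[n]$, the probability that the distinct values appear in a prescribed order $b_1,b_2,\dots,b_m$ depends only on $m$ and $n$ and not on the particular distinct values $b_1,\dots,b_m$ chosen — intuitively, relabelling the coupon types by any permutation of $[n]$ leaves the i.i.d.\ uniform law invariant, and the event ``the first $m$ distinct values, in order, are $b_1,\dots,b_m$'' is mapped to the corresponding event for the permuted labels. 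Hence $\P\{\text{distinct values appear in order } b_1,\dots,b_m\}$ is a constant $c=c(m,n)$ independent of the choice of ordered tuple of distinct labels, so $\P\{S_n=A\}=m!\,c$ for every $m$-subset $A$, and since these probabilities sum to $1$ over the $\binom{n}{m}$ subsets we get $\P\{S_n=A\}=\binom{n}{m}^{-1}$, which is the uniform law.

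To make the permutation-invariance rigorous I would introduce, for a permutation $\rho$ of $[n]$, the transformed sequence $(\rho(U_j^{(n)}))_{j\in\N}$, observe that it has the same law as $(U_j^{(n)})_{j\in\N}$, that $\tau^{(n)}(i)$ is unchanged under this transformation (it depends only on the pattern of repetitions, not on the values), and that $U_{\tau^{(n)}(i)}^{(n)}$ is mapped to $\rho(U_{\tau^{(n)}(i)}^{(n)})$; choosing $\rho$ to send one ordered tuple of distinct values to another then yields the equality of the two probabilities. The main (and essentially only) obstacle is organizing this symmetry argument cleanly — in particular being careful that $\tau^{(n)}(m)$ is a.s.\ finite (noted already in the text) so that $S_n$ is a well-defined random $m$-set, and that the events indexed by orderings are genuinely disjoint and exhaustive; none of this involves any hard estimate, so the proof should be short once the bookkeeping is set up. An alternative, equivalent route is to compute $\P\{S_n = A\}$ directly via the sum-of-geometrics representation \eqref{eq:tau_sum_representation} and inclusion–exclusion, but the symmetry argument avoids that computation entirely.
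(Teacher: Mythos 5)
Your proposal is correct, but it proves the lemma by a different route than the paper. The paper argues by induction on $m$: it decomposes the event $\{S_n=\{b_1,\dots,b_m\}\}$ according to which element $b_j$ is discovered last, computes explicitly that the conditional probability of the next new value being $b_j$, given that the first $m-1$ distinct values are $\{b_1,\dots,b_m\}\setminus\{b_j\}$, equals $(n-m+1)^{-1}$, and then invokes the induction hypothesis $\binom{n}{m-1}^{-1}$ for the first $m-1$ values; summing over $j$ gives $m\bigl((n-m+1)\binom{n}{m-1}\bigr)^{-1}=\binom{n}{m}^{-1}$. Your argument replaces this computation entirely by the relabelling symmetry $\rho$ of $[n]$, which fixes the stopping times $\tau^{(n)}(i)$ (they depend only on the repetition pattern) and permutes the values; this shows the probability of any prescribed order of first appearances is a constant $c(m,n)$, whence $\P\{S_n=A\}=m!\,c$ is constant in $A$ and the normalization forces it to be $\binom{n}{m}^{-1}$. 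Both arguments are complete and elementary; yours avoids induction and any explicit conditional-probability calculation, and in fact yields the stronger statement that the ordered tuple $\bigl(U^{(n)}_{\tau^{(n)}(1)},\dots,U^{(n)}_{\tau^{(n)}(m)}\bigr)$ is uniform over ordered $m$-tuples of distinct elements of $[n]$ (i.e., it is a sequential sample without replacement), while the paper's induction delivers exactly the unordered statement it needs. The bookkeeping points you flag (a.s.\ finiteness of $\tau^{(n)}(m)$, disjointness and exhaustiveness of the order-indexed events) are all benign and handled as you describe.
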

\begin{proof}
It is enough to show that
$$
\P\{\{U_{\tau^{(n)}(1)}^{(n)},U_{\tau^{(n)}(2)}^{(n)},\ldots,U_{\tau^{(n)}(m)}^{(n)}\}=\{b_1,b_2,\ldots,b_m\}\}=1/\binom{n}{m},\quad 1\leq m\leq n.
$$
for every fixed set $\{b_1,b_2,\ldots,b_m\}\subset [n]$ of pairwise distinct integers. We argue by induction on $m$. For $m=1$, the claim is obvious because $\tau^{(n)}(1)=1$ by definition. We have, for $2\leq m\leq n$,
\begin{align*}
&\hspace{-0.2cm}\P\{\{U_{\tau^{(n)}(1)}^{(n)},U_{\tau^{(n)}(2)}^{(n)},\ldots,U_{\tau^{(n)}(m)}^{(n)}\}=\{b_1,b_2,\ldots,b_m\}\}\\
&\sum_{j=1}^{m}\P\{U_{\tau^{(n)}(m)}^{(n)}=b_j|U_{\tau^{(n)}(1)}^{(n)},U_{\tau^{(n)}(2)}^{(n)},\ldots,U_{\tau^{(n)}(m-1)}^{(n)}=\{b_1,b_2,\ldots,b_m\}\setminus \{b_j\}\}\\
&\times\P\{U_{\tau^{(n)}(1)}^{(n)},U_{\tau^{(n)}(2)}^{(n)},\ldots,U_{\tau^{(n)}(m-1)}^{(n)}=\{b_1,b_2,\ldots,b_m\}\setminus \{b_j\}\}.
\end{align*}
The first probability under the sum is equal to $(n-m+1)^{-1}$ and the second probability, by the induction assumption, is $\binom{n}{m-1}^{-1}$. Therefore,
\begin{multline*}
\P\{\{U_{\tau^{(n)}(1)}^{(n)},U_{\tau^{(n)}(2)}^{(n)},\ldots,U_{\tau^{(n)}(m)}^{(n)}\}=\{b_1,b_2,\ldots,b_m\}\}\\
=\sum_{j=1}^{m}\Big((n-m+1)\binom{n}{m-1}\Big)^{-1}=m \Big((n-m+1)\binom{n}{m-1}\Big)^{-1}=1/\binom{n}{m},
\end{multline*}
as wanted.
\end{proof}
The explicit construction in Lemma \ref{lem:explicit_construction} allows us to construct the whole collection $(B_n(m))_{1\leq m\leq n}$ in a consistent way from the sequence  $(U_j^{(n)})_{j\in\N}$. Thus, from now on we {\it redefine} the sets $B_n(m)$ by putting
\begin{equation}\label{eq:redefinition}
B_n(m):=\{U_{1}^{(n)},U_{2}^{(n)},\ldots,U_{\tau^{(n)}(m)}^{(n)}\},\quad 1\leq m\leq n.
\end{equation}
Now we can speak about the distribution of $(B_n(i),B_n(j))$, $i\neq j$ and more generally about finite-dimensional distributions of the collection $(B_n(m))_{1\leq m\leq n}$.

\section{Limit theorems for the least common multiple of \texorpdfstring{$B_n(m)$}{Bn(m)}}

Denote by $\mathcal{P}$ the set of prime numbers. Also, let $\lambda_p(n)$ denote the multiplicity of a prime number $p\in\mathcal{P}$
in the unique decomposition of $n\in\N$ into prime factors, that is,
$$ n=\prod_{p\in\mathcal{P}}p^{\lambda_p(n)}.$$
In what follows we tacitly assume that all products and sums with indices $p,q,r,s$ only extend over prime numbers.
We also stipulate that ``${\rm const}$'' is a constant whose value is of no importance and may change from one appearance to another. Also, all unspecified limit relations are assumed to hold as $n\to\infty$.

\subsection{The case of fixed \texorpdfstring{$m$}{m}.}\label{subsec:fixed_m}
Let $((\mathcal{G}_k(2),\mathcal{G}_k(3),\mathcal{G}_k(5),\ldots))_{k\in\N}$ be a sequence of independent copies of an infinite vector $(\mathcal{G}(2),\mathcal{G}(3),\mathcal{G}(5),\ldots)$ with mutually independent coordinates having a geometric distribution
\begin{equation}\label{geom}
\P\{\mathcal{G}(p)\geq j\}=p^{-j},\quad j\in\N_0,\quad p\in\mathcal{P}.
\end{equation}
The importance of these geometric variables stems from the following lemma which has a long history, see, for instance, \cite[Formulas (2.5)-(2.7)]{Kubilius:1964} and \cite{Bill:74}, and is presented here in the form borrowed from \cite{BosMarRas:19}.

The distribution given in \eqref{eq:uniform_distribution} is a discrete uniform distribution. We recall that there also exists a continuous uniform distribution $\mu$, say on $[0,1]$ defined by $\mu({\rm d}x)=\1_{(0,1)}(x){\rm d}x$. We shall write $\todistri$ and  $\todistr$ to denote convergence in distribution in $\R^{\infty}$ endowed with the product topology and in $\mr$, respectively.
\begin{lemma}\label{lem:conv_to_geom}
Let
\begin{equation*}
     U^{(n)}=\prod_{p\in\mathcal{P}}p^{\lambda_p(U^{(n)})}
\end{equation*}
be the decomposition of $U^{(n)}$ with distribution \eqref{eq:uniform_distribution} into prime factors. Then
\begin{enumerate}[label={(\roman{*})},ref={(\roman{*})}]
     \item\label{itt:1}
\begin{equation*}
     \bigl(\lambda_p(U^{(n)})\bigr)_{p\in\mathcal{P}}\todistri \Big(\mathcal{G}(p)\Big)_{p\in\mathcal{P}};
\end{equation*}

     \item\label{itt:2}
\begin{equation*}
     \left(n^{-1} U^{(n)},\bigl(\lambda_p(U^{(n)})\bigr)_{p\in\mathcal{P}}\right)\todistri \left(U,\left(\mathcal{G}(p)\right)_{p\in\mathcal{P}}\right),
\end{equation*}
with $U$ being uniformly distributed on $[0,1]$ and independent of $\left(\mathcal{G}(p)\right)_{p\in\mathcal{P}}$;
     \item\label{itt:3}for $p,q\in\mathcal{P}$, $p\neq q$ and $k_p,k_q\in\N_0$, 
\begin{equation*}
     \P\{\lambda_p(U^{(n)})=k_p,\lambda_q(U^{(n)})=k_q\}=(1-p^{-1})(1-q^{-1})p^{-k_p}q^{-k_q}+O(n^{-1}),
\end{equation*}
where the constant in the $O$-term does not depend on $(p,q,k_p,k_q)$.
\end{enumerate}
\end{lemma}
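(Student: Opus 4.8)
The plan is to deduce all three claims from one elementary identity: for pairwise distinct primes $p_1,\ldots,p_r$ and exponents $k_1,\ldots,k_r\in\N_0$, the event $\{\lambda_{p_i}(U^{(n)})\ge k_i\text{ for all }i\}$ coincides with the event that $d:=\prod_{i=1}^{r}p_i^{k_i}$ divides $U^{(n)}$, whence
\begin{equation*}
\P\{\lambda_{p_i}(U^{(n)})\ge k_i,\ i=1,\ldots,r\}=\frac{1}{n}\left\lfloor\frac{n}{d}\right\rfloor,
\end{equation*}
a quantity that differs from $d^{-1}$ by at most $n^{-1}$, \emph{uniformly} over all choices of the primes and the exponents (when $d>n$ the left-hand side vanishes and $d^{-1}<n^{-1}$, so the bound still holds). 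I will also invoke the standard facts that weak convergence in $\R^\infty$ equipped with the product topology is equivalent to weak convergence of all finite-dimensional projections, and that for $\N_0$-valued random vectors this is in turn equivalent to pointwise convergence of the joint probability mass functions.

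For part~\ref{itt:3}, and for the finite-dimensional form of part~\ref{itt:1}, I would apply inclusion--exclusion: taking successive differences of the identity above in each coordinate (each $k_i$ replaced by $k_i$ or $k_i+1$) over the resulting $2^r$ terms gives
\begin{equation*}
\P\{\lambda_{p_i}(U^{(n)})=k_i,\ i=1,\ldots,r\}=\prod_{i=1}^{r}(1-p_i^{-1})p_i^{-k_i}+O(n^{-1}),
\end{equation*}
again uniformly in $k_1,\ldots,k_r$ (for fixed $r$ there are only $2^r$ error terms, each of absolute value at most $n^{-1}$). Setting $r=2$ is exactly part~\ref{itt:3}; letting $n\to\infty$ for arbitrary $r$ identifies the limiting joint mass function with $\prod_{i=1}^{r}\P\{\mathcal{G}(p_i)=k_i\}$, which, by the reduction described above, is part~\ref{itt:1}.

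For part~\ref{itt:2} I would repeat the count while also tracking the magnitude of $U^{(n)}$. Fix $x\in[0,1]$, pairwise distinct primes $p_1,\ldots,p_r$, exponents $k_1,\ldots,k_r\in\N_0$, and set $d:=\prod_{i=1}^{r}p_i^{k_i}$. An integer $u\in[n]$ satisfies $\lambda_{p_i}(u)=k_i$ for every $i$ if and only if $u=dj$ with $j\ge 1$ and $p_i\nmid j$ for all $i$; among such $u$, those with $u\le nx$ are precisely those with $j\le nx/d$ (here we use $x\le 1$). Counting the admissible $j$ by inclusion--exclusion over the subsets of $\{p_1,\ldots,p_r\}$ yields $\tfrac{nx}{d}\prod_{i=1}^{r}(1-p_i^{-1})+O(1)$, and hence
\begin{equation*}
\P\{n^{-1}U^{(n)}\le x,\ \lambda_{p_i}(U^{(n)})=k_i,\ i=1,\ldots,r\}=x\prod_{i=1}^{r}(1-p_i^{-1})p_i^{-k_i}+O(n^{-1}).
\end{equation*}
Summing the last display over $k_i\in\{0,\ldots,\ell_i\}$, $i=1,\ldots,r$, shows that the joint distribution function of $\bigl(n^{-1}U^{(n)},\lambda_{p_1}(U^{(n)}),\ldots,\lambda_{p_r}(U^{(n)})\bigr)$ converges at every point to that of $\bigl(U,\mathcal{G}(p_1),\ldots,\mathcal{G}(p_r)\bigr)$ with $U$ uniform on $[0,1]$ and independent of $\mathcal{G}(p_1),\ldots,\mathcal{G}(p_r)$; this is weak convergence of the projection, and since the primes were arbitrary, part~\ref{itt:2} follows.

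I do not expect a real obstacle here, as the result is classical and the estimates elementary. The only points I would be careful to spell out are the \emph{uniformity} of the $O(n^{-1})$ remainders in part~\ref{itt:3} (so that the error constant there is genuinely independent of $p$, $q$, $k_p$ and $k_q$, including the degenerate regime $p^{k_p}q^{k_q}>n$), and the passage from the $\R^\infty$-valued statements in parts~\ref{itt:1} and~\ref{itt:2} to finitely many coordinates.
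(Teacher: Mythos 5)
Your proof is correct. Note, however, that the paper itself does not prove this lemma at all: it is stated as a known result, "presented here in the form borrowed from" Bostan--Marynych--Raschel, with further pointers to Kubilius (Formulas (2.5)--(2.7)) and Billingsley (1974). So there is no in-paper argument to compare against; what you have done is reconstruct the classical proof. Your reconstruction is the standard one and all steps check out: the identity $\P\{\lambda_{p_i}(U^{(n)})\ge k_i,\ i=1,\ldots,r\}=n^{-1}\lfloor n/d\rfloor$ with $d=\prod_i p_i^{k_i}$, the uniform bound $|n^{-1}\lfloor n/d\rfloor-d^{-1}|\le n^{-1}$ including the degenerate case $d>n$, the $2^r$-term inclusion--exclusion giving part (iii) with a constant at most $4$, the reduction of convergence in $\R^\infty$ to finite-dimensional marginals and of the latter (for $\N_0$-valued vectors) to pointwise convergence of mass functions, and the refined count for part (ii) via $u=dj$ with $j\le nx/d$ and $p_i\nmid j$. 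The only things worth spelling out if this were written in full are the two points you already flag yourself (uniformity of the remainders and the passage to finitely many coordinates), plus the routine observation that convergence of the joint distribution functions at points $(x,\ell_1,\ldots,\ell_r)$ with integer $\ell_i$ suffices, since both sides are constant in the discrete coordinates between integers and the limit is continuous in $x$.
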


With the help of this lemma the following result has been proved in \cite{BosMarRas:19}. See also \cite{Fernandez+Fernandez:2013} for the cases $m=2,3$.
\begin{proposition}[Formula (16) in \cite{BosMarRas:19}]\label{thm:fixed_m_BMR:19}
For every fixed $m\in\N$,
\begin{multline*}
\log \LCM(U_1^{(n)},U_2^{(n)},\ldots,U_m^{(n)})-m\log n\\
\todistr \sum_{j=1}^m \log U_j+\sum_p \log p\cdot \Big(\max_{1\leq k\leq m}\mathcal{G}_k(p)-\sum_{k=1}^{m}\mathcal{G}_k(p)\Big),
\end{multline*}
where $(U_j)_{j=1,\ldots,m}$ are independent random variables with the uniform distribution on $[0,1]$ which are also independent of $(\mathcal{G}_k(p))_{k\in\N,p\in\mathcal{P}}$.
\end{proposition}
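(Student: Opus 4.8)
The plan is to reduce the assertion to Lemma~\ref{lem:conv_to_geom}\ref{itt:2} via the two elementary arithmetic identities
\[
\log\LCM(a_1,\ldots,a_m)=\sum_{p}\log p\cdot\max_{1\le k\le m}\lambda_p(a_k),\qquad \log a_k=\sum_{p}\log p\cdot\lambda_p(a_k),
\]
valid for $a_1,\ldots,a_m\in\N$ (the first sum being finite). Applying both with $a_k=U_k^{(n)}$ and subtracting $m\log n=\sum_{k=1}^m\log n$, one obtains the exact decomposition $\log\LCM(U_1^{(n)},\ldots,U_m^{(n)})-m\log n=A_n+B_n$ with
\[
A_n:=\sum_{k=1}^m\log\frac{U_k^{(n)}}{n},\qquad B_n:=\sum_{p}\log p\cdot\Bigl(\max_{1\le k\le m}\lambda_p(U_k^{(n)})-\sum_{k=1}^m\lambda_p(U_k^{(n)})\Bigr)\le 0 .
\]
The random variable on the right-hand side of the asserted limit has exactly the matching form $A+B$, where $A:=\sum_{k=1}^m\log U_k$ and $B:=\sum_{p}\log p\cdot(\max_{1\le k\le m}\mathcal{G}_k(p)-\sum_{k=1}^m\mathcal{G}_k(p))$, so it suffices to prove $A_n+B_n\todistr A+B$.

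Before that I would check that $B$ is a genuine (almost surely finite) random variable and that its truncations $B^{(P)}:=\sum_{p\le P}\log p\cdot(\max_{1\le k\le m}\mathcal{G}_k(p)-\sum_{k=1}^m\mathcal{G}_k(p))$ converge to it a.s. The summands indexed by distinct primes are independent, and the elementary inequality $\sum_{k=1}^m x_k-\max_{1\le k\le m}x_k\le\sum_{1\le i<j\le m}\min(x_i,x_j)$ for nonnegative reals, together with $\E\min(\mathcal{G}_1(p),\mathcal{G}_2(p))=\sum_{j\ge1}p^{-2j}\le(p^2-1)^{-1}$, gives
\[
\sum_{p}\log p\cdot\E\Bigl(\sum_{k=1}^m\mathcal{G}_k(p)-\max_{1\le k\le m}\mathcal{G}_k(p)\Bigr)\le\operatorname{const}\cdot\sum_{p}\frac{\log p}{p^{2}}<\infty ,
\]
with a constant depending only on $m$; hence the series defining $B$ converges absolutely a.s.\ and $B^{(P)}\to B$ a.s.

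Now fix a prime $P$ and let $B_n^{(P)}$ denote the corresponding truncation of $B_n$. Since $U_1^{(n)},\ldots,U_m^{(n)}$ are i.i.d., Lemma~\ref{lem:conv_to_geom}\ref{itt:2}, read one prime coordinate at a time and combined over the $m$ samples (marginal convergence of independent sequences yields joint convergence to the product law), gives
\[
\Bigl(\frac{U_k^{(n)}}{n},\,\bigl(\lambda_p(U_k^{(n)})\bigr)_{p\le P}\Bigr)_{k=1}^{m}\todistr\Bigl(U_k,\,\bigl(\mathcal{G}_k(p)\bigr)_{p\le P}\Bigr)_{k=1}^{m},
\]
where the $m$ limiting blocks are independent and, within each block, $U_k$ is independent of $(\mathcal{G}_k(p))_p$; in particular $(U_k)_{k}$ and $(\mathcal{G}_k(p))_{k,p}$ are independent, as the statement requires. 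The functional sending $(x_k,(\ell_{k,p})_{p\le P})_{k=1}^m$ to $\sum_{k}\log x_k+\sum_{p\le P}\log p\cdot(\max_{1\le k\le m}\ell_{k,p}-\sum_{k=1}^m\ell_{k,p})$ is continuous on $(0,\infty)^m\times\R^{m\cdot\#\{p\le P\}}$, a set which carries the limit law with probability one, so the continuous mapping theorem gives $A_n+B_n^{(P)}\todistr A+B^{(P)}$ for each fixed $P$. (The one point needing care is that $\log$ is evaluated at $U_k^{(n)}/n\in(0,1]$ while the limits $U_k$ are a.s.\ strictly positive, so no mass escapes to $-\infty$.)

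It remains to let $P\to\infty$. On the limit side $A+B^{(P)}\to A+B$ a.s. On the prelimit side the discarded tail is negligible \emph{uniformly in $n$}: by Markov's inequality, the inequality $\sum_k x_k-\max_k x_k\le\sum_{i<j}\min(x_i,x_j)$ once more, and $\P\{\lambda_p(U^{(n)})\ge j\}=\lfloor n/p^j\rfloor/n\le p^{-j}$, one gets for every $\varepsilon>0$
\[
\P\bigl\{|B_n-B_n^{(P)}|\ge\varepsilon\bigr\}\le\frac1\varepsilon\sum_{p>P}\log p\cdot\E\Bigl(\sum_{k=1}^m\lambda_p(U_k^{(n)})-\max_{1\le k\le m}\lambda_p(U_k^{(n)})\Bigr)\le\frac{\operatorname{const}}{\varepsilon}\sum_{p>P}\frac{\log p}{p^{2}},
\]
the last bound being free of $n$ and tending to $0$ as $P\to\infty$. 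A standard approximation lemma for convergence in distribution then turns the family of relations $A_n+B_n^{(P)}\todistr A+B^{(P)}$ (indexed by $P$) into $A_n+B_n\todistr A+B$, which is the claim. I expect this last passage to be the main point: it is where the purely finite-dimensional (product-topology) convergence supplied by Lemma~\ref{lem:conv_to_geom} must be promoted to convergence of an honestly infinite random series, and this hinges entirely on the crude but decisive estimate $\lfloor n/p^j\rfloor\le n/p^j$, which makes $\sum_p(\log p)/p^2$ summable; all remaining ingredients are only the continuous mapping theorem and bookkeeping.
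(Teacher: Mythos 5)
Your argument is correct and follows essentially the same route as the paper's Appendix proof of the stronger Proposition \ref{prop:fixed_m}: split the prime sum at a finite threshold, apply Lemma \ref{lem:conv_to_geom}(ii) and the continuous mapping theorem to the truncated part, and close with Billingsley's approximation theorem. The only difference is that you prove the uniform-in-$n$ tail bound directly via $\sum_k x_k-\max_k x_k\le\sum_{i<j}\min(x_i,x_j)$ and $\lfloor n/p^j\rfloor/n\le p^{-j}$, whereas the paper cites formula (22) of \cite{BosMarRas:19} for this step.
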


Using the same techniques as in \cite{BosMarRas:19} Proposition \ref{thm:fixed_m_BMR:19} can be strengthened as follows.
\begin{proposition}\label{prop:fixed_m}
\begin{multline*}
\Big(\log \LCM(U_1^{(n)},U_2^{(n)},\ldots,U_m^{(n)})-m\log n\Big)_{m\in\N}\\
\todistri \Big(\sum_{j=1}^m \log U_j+\sum_p \log p\cdot \Big(\max_{1\leq k\leq m}\mathcal{G}_k(p)-\sum_{k=1}^m \mathcal{G}_k(p)\Big)\Big)_{m\in\N},
\end{multline*}
where $(U_j)_{j\geq 1}$ are independent random variables  with the uniform distribution on $[0,1]$ which are also independent of $(\mathcal{G}_k(p))_{k\in\N,p\in\mathcal{P}}$.
\end{proposition}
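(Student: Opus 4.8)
The plan is to upgrade the one-dimensional convergence of Proposition \ref{thm:fixed_m_BMR:19} to convergence of the whole sequence indexed by $m$, i.e.\ convergence in $\R^\infty$ with the product topology. By standard theory, convergence of all finite-dimensional distributions is equivalent to convergence in $\R^\infty$ with the product topology, so it suffices to fix $M\in\N$ and prove that the $\R^M$-valued random vector
$$
\Big(\log \LCM(U_1^{(n)},\ldots,U_m^{(n)})-m\log n\Big)_{1\leq m\leq M}
$$
converges in distribution to the corresponding vector on the right-hand side. The natural route is to first establish a joint version of Lemma \ref{lem:conv_to_geom}, namely that the array $(\lambda_p(U_k^{(n)}))_{1\leq k\leq M,\,p\in\mathcal{P}}$ together with the rescaled values $(n^{-1}U_k^{(n)})_{1\leq k\leq M}$ converges in distribution (in the product topology) to $(U_k,(\mathcal{G}_k(p))_{p\in\mathcal{P}})_{1\leq k\leq M}$, where the $U_k$ are i.i.d.\ uniform on $[0,1]$, the arrays $(\mathcal{G}_k(p))_{p}$ are independent across $k$, and the whole family is mutually independent. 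Since $U_1^{(n)},\ldots,U_M^{(n)}$ are independent, this is just the $M$-fold product of the convergence in Lemma \ref{lem:conv_to_geom}\ref{itt:2}, and convergence in distribution is preserved under taking products of independent sequences.

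Next I would pass from the array of prime multiplicities to the logarithm of the LCM. Write, for each $m$,
$$
\log \LCM(U_1^{(n)},\ldots,U_m^{(n)}) = \sum_p \log p\cdot \max_{1\leq k\leq m}\lambda_p(U_k^{(n)}),
$$
and
$$
\sum_{k=1}^m \log U_k^{(n)} = \sum_{k=1}^m \log n + \sum_{k=1}^m \sum_p \log p\cdot \lambda_p(U_k^{(n)}),
$$
the latter being an identity up to the substitution $\log U_k^{(n)} = \log n + \log(n^{-1}U_k^{(n)})$. Subtracting, one gets the exact identity
$$
\log \LCM(U_1^{(n)},\ldots,U_m^{(n)}) - m\log n
= \sum_{k=1}^m \log\!\big(n^{-1}U_k^{(n)}\big) + \sum_p \log p\cdot\Big(\max_{1\leq k\leq m}\lambda_p(U_k^{(n)}) - \sum_{k=1}^m \lambda_p(U_k^{(n)})\Big).
$$
The map sending the array $(\lambda_p(U_k^{(n)}))_{k\leq M,p}$ and the vector $(n^{-1}U_k^{(n)})_{k\leq M}$ to the $\R^M$-vector on the left-hand side is, however, not continuous on $\R^\infty$ (it involves an infinite sum over primes), so the continuous mapping theorem does not apply directly. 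The standard remedy is truncation: fix a threshold $Q$, split the sum over primes into $p\leq Q$ and $p>Q$, apply the continuous mapping theorem to the finite part (which is a genuinely continuous functional of finitely many coordinates), and then control the tail uniformly in $n$.

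The main obstacle is therefore the uniform tail estimate: one must show that for every $\varepsilon>0$,
$$
\limsup_{n\to\infty}\P\Big\{\Big|\sum_{p>Q}\log p\cdot\Big(\max_{1\leq k\leq m}\lambda_p(U_k^{(n)}) - \sum_{k=1}^m \lambda_p(U_k^{(n)})\Big)\Big| > \varepsilon\Big\}\xrightarrow[Q\to\infty]{}0
$$
uniformly in $m\leq M$, and the analogous (trivial) statement for the limit. Since $0\leq \sum_{k=1}^m\lambda_p(U_k^{(n)}) - \max_{1\leq k\leq m}\lambda_p(U_k^{(n)}) \leq \sum_{k=1}^m\lambda_p(U_k^{(n)})$, it is enough to bound $\E\big[\sum_{p>Q}\log p\sum_{k=1}^m\lambda_p(U_k^{(n)})\big] = m\sum_{p>Q}\log p\cdot\E[\lambda_p(U^{(n)})]$ and invoke Markov's inequality. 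The elementary bound $\E[\lambda_p(U^{(n)})] = \sum_{j\geq 1}\P\{p^j\mid U^{(n)}\} = \sum_{j\geq 1}\lfloor n/p^j\rfloor/n \leq \sum_{j\geq 1}p^{-j} = (p-1)^{-1}$, together with the convergence of $\sum_p \log p/(p-1)^2$ — or, after one more summation by parts, of $\sum_p \log p/(p(p-1))$ — one checks $\sum_{p>Q}\log p\cdot\E[\lambda_p(U^{(n)})]$ does not converge; the correct fix is to center: replace $\lambda_p(U_k^{(n)})$ by $\lambda_p(U_k^{(n)}) - \E\mathcal{G}(p)$ is not available here, so instead one uses that the relevant quantity $\sum_{k=1}^m\lambda_p - \max_k\lambda_p$ vanishes on the event $\{\lambda_p(U_k^{(n)})=0\ \forall k\}$, whose complement has probability at most $m\,\P\{p\mid U^{(n)}\}\leq m/p$; combined with $\log p/p$ being summable over a zeta-type comparison after noting $\sum_p (\log p)/p$ diverges but $\sum_p (\log p)\,\P\{p^2\mid\text{something}\}$ converges, one obtains the tail bound from $\sum_{p>Q}(\log p)\E\big[(\lambda_p(U^{(n)}) - \1_{\{\lambda_p(U^{(n)})\geq 1\}})_+\big]\leq \sum_{p>Q}(\log p)\sum_{j\geq 2}p^{-j} = \sum_{p>Q}(\log p)/(p(p-1))\to 0$. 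This tail series converges, and the same estimate shows the limiting series $\sum_p \log p\cdot(\max_k\mathcal{G}_k(p) - \sum_k\mathcal{G}_k(p))$ converges a.s., so the standard three-$\varepsilon$ / Theorem 3.2 of Billingsley argument closes the proof: finite-$Q$ convergence plus vanishing tails uniformly in $n$ and in the limit yields the claimed convergence in $\R^M$, hence in $\R^\infty$.
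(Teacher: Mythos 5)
Your overall architecture coincides with the paper's: reduce to finite-dimensional convergence, use the exact identity expressing $\log \LCM(U_1^{(n)},\ldots,U_m^{(n)})-m\log n$ through $\sum_{k\leq m}\log(n^{-1}U_k^{(n)})$ plus the prime sum of $\max_k\lambda_p-\sum_k\lambda_p$, truncate the prime sum at a finite threshold, apply the continuous mapping theorem to the truncated part via (the $M$-fold product of) Lemma \ref{lem:conv_to_geom}\ref{itt:2}, and close with Billingsley's Theorem 3.2. The paper does exactly this, delegating the uniform tail bound to formula (22) of \cite{BosMarRas:19} and the a.s.\ convergence of the limit series to Theorem 2.3(i) there.

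The one place where you supply details instead of a citation, the tail estimate, contains a genuine error. Your concluding bound controls
$$\sum_{p>Q}\log p\;\E\big[(\lambda_p(U^{(n)})-\1_{\{\lambda_p(U^{(n)})\geq 1\}})_+\big]\leq\sum_{p>Q}\frac{\log p}{p(p-1)},$$
but the quantity you must control is $\E\big[\sum_{k=1}^m\lambda_p(U_k^{(n)})-\max_{1\leq k\leq m}\lambda_p(U_k^{(n)})\big]$, and the implicit inequality $\sum_k\lambda_p-\max_k\lambda_p\leq\sum_k(\lambda_p(U_k^{(n)})-1)_+$ is false: if exactly two of the $U_k^{(n)}$ are divisible by $p$, each exactly once, the left-hand side equals $1$ while the right-hand side is $0$. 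The difference $\sum_k\lambda_p-\max_k\lambda_p$ has two sources, excess multiplicities within one sample member and collisions between distinct sample members, and your estimate captures only the first. The repair is the decomposition
$$\sum_{k=1}^m\lambda_p(U_k^{(n)})-\max_{1\leq k\leq m}\lambda_p(U_k^{(n)})\leq\sum_{k=1}^m(\lambda_p(U_k^{(n)})-1)_+ + (N_p-1)_+,\qquad N_p:=\sum_{k=1}^m\1_{\{\lambda_p(U_k^{(n)})\geq 1\}},$$
which follows from $\lambda_p=(\lambda_p-1)_++\1_{\{\lambda_p\geq1\}}$ termwise and $\max_k\lambda_p(U_k^{(n)})\geq\1_{\{N_p\geq 1\}}$. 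The first term has expectation at most $m/(p(p-1))$ as you computed; for the second, $(N_p-1)_+\leq N_p(N_p-1)/2$ gives $\E(N_p-1)_+\leq\binom{m}{2}p^{-2}$. Both are summable against $\log p$, so the tail bound, and by the same computation with $\mathcal{G}_k(p)$ in place of $\lambda_p(U_k^{(n)})$ the a.s.\ convergence of the limiting series, does hold, and your argument closes once this step is corrected.
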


We shall give a short proof of Proposition \ref{prop:fixed_m} in the Appendix. Since, for every fixed $m$,
$$
\lim_{n\to\infty}\P\{\tau^{(n)}(m)=m\}=1,
$$
see formula \eqref{eq:tau_to_m_convergence} below, Proposition \ref{prop:fixed_m} immediately yields the following.
\begin{theorem}\label{thm:fixed_m}
\begin{multline*}
\Big(\log \LCM(B_n(m))-m\log n\Big)_{m\in\N}\\
\todistri \Big(\sum_{j=1}^m\log U_j+\sum_p \log p\cdot \Big(\max_{1\leq k\leq m}\mathcal{G}_k(p)-\sum_{k=1}^{m}\mathcal{G}_k(p)\Big)\Big)_{m\in\N}.
\end{multline*}
\end{theorem}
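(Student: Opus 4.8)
The plan is to deduce Theorem \ref{thm:fixed_m} from Proposition \ref{prop:fixed_m} by exploiting the coupling \eqref{eq:redefinition}, together with the fact that for each fixed $m$ the stopping time $\tau^{(n)}(m)$ equals $m$ with probability tending to $1$. Concretely, I would work on the single probability space carrying the i.i.d.\ sequence $(U_j^{(n)})_{j\in\N}$ and the associated stopping times, so that $B_n(m)=\{U_1^{(n)},\dots,U_{\tau^{(n)}(m)}^{(n)}\}$. On the event $\{\tau^{(n)}(m)=m\}$ the elements $U_1^{(n)},\dots,U_m^{(n)}$ are automatically pairwise distinct, hence
\[
\LCM(B_n(m)) = \LCM(U_1^{(n)},U_2^{(n)},\dots,U_m^{(n)}),
\]
and consequently $\log\LCM(B_n(m))-m\log n$ coincides, on that event, with the random variable appearing in Proposition \ref{prop:fixed_m}.

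Next I would promote this from fixed $m$ to the full vector. Fix $M\in\N$; since $\{\tau^{(n)}(m)=m\}$ is decreasing in $m$ (more different values require at least as many purchases, and $\tau^{(n)}(m)\ge m$ always), the event $A_{n,M}:=\bigcap_{m=1}^{M}\{\tau^{(n)}(m)=m\}$ equals $\{\tau^{(n)}(M)=M\}$, which by \eqref{eq:tau_to_m_convergence} (that is, $\P\{\tau^{(n)}(m)=m\}\to1$ for fixed $m$) has probability tending to $1$ as $n\to\infty$. On $A_{n,M}$ the random vector
\[
\Big(\log\LCM(B_n(m))-m\log n\Big)_{1\leq m\leq M}
\]
coincides identically with
\[
\Big(\log\LCM(U_1^{(n)},\dots,U_m^{(n)})-m\log n\Big)_{1\leq m\leq M}.
\]
Therefore the two random vectors differ on an event of probability $o(1)$, so they have the same weak limit in $\R^M$; by Proposition \ref{prop:fixed_m} (projected onto the first $M$ coordinates) this limit is the stated one. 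Since $M$ is arbitrary and convergence in distribution in $\R^\infty$ with the product topology is equivalent to convergence of all finite-dimensional projections, the claimed convergence $\todistri$ follows.

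This argument has essentially no hard step: the two ingredients are the exact coupling from Lemma \ref{lem:explicit_construction}/\eqref{eq:redefinition} and the elementary coupon-collector fact $\P\{\tau^{(n)}(m)=m\}\to1$, which is just $\prod_{i=1}^{m}\bigl(1-\tfrac{i-1}{n}\bigr)\to1$ and is recorded as \eqref{eq:tau_to_m_convergence} later in the paper. The only point requiring a line of care is the reduction to finitely many coordinates, namely noting that $\{\tau^{(n)}(m)=m\}$ is nested in $m$ so that the bad event for the first $M$ coordinates is a single event whose probability vanishes; after that, the "differ on a negligible event implies same weak limit" principle (Slutsky-type, or a direct Portmanteau argument) closes the proof. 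No quantitative control of the discrepancy is needed, only that it occurs with vanishing probability, which is why the case of fixed $m$ is so much softer than the diverging-$m$ regime treated in Theorems \ref{thm:main_result1}--\ref{thm:main_result2}.
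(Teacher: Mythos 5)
Your argument is correct and is exactly the route the paper takes: Theorem \ref{thm:fixed_m} is deduced from Proposition \ref{prop:fixed_m} via the coupling \eqref{eq:redefinition} and the fact \eqref{eq:tau_to_m_convergence} that $\P\{\tau^{(n)}(m)=m\}\to 1$ for each fixed $m$, the paper simply declaring this step immediate. Your additional observations (nestedness of the events $\{\tau^{(n)}(m)=m\}$ in $m$ and the reduction to finite-dimensional projections) are correct details that the paper leaves implicit.
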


\subsection{The case \texorpdfstring{$m=m_n\to\infty$}{m->infinity} and \texorpdfstring{$m_n=o(n)$}{m=o(n)}.}\label{subsec:diverging_m}

Theorem \ref{thm:fixed_m} dealing with the case of fixed $m$ follows, for the most part, from the previously known results. The case $m_n\to\infty$ turns out to be more intriguing and requires a different approach.

As usual, we write $(\mathcal{Z}_n(t))_{t\geq 0}\tofd (\mathcal{Z}(t))_{t\geq 0}$ to  denote weak  convergence  of  finite-dimensional distributions, that is, for any $k\in\mn$ and any $0\leq t_1<t_2<\cdots<t_k<\infty$, $(\mathcal{Z}_n(t_1),\ldots,\mathcal{Z}_n(t_k))$ converges in distribution to $(\mathcal{Z}(t_1),\ldots, \mathcal{Z}(t_k))$ as $n\to\infty$. For every fixed $n\in\N$ and $y\geq 0$, put
\begin{equation}\label{eq:cn}
c_n(y):=\sum_{p\leq n}\log p\, (1-(1-n^{-1}\lfloor n/p\rfloor)^y).
\end{equation}
We distinguish two cases:
\begin{itemize}
\item[(A)] $m_n\leq n^{1/2}$ for all sufficiently large $n$ and $\lim_{n\to\infty}m_n=\infty$;
\item[(B)] $m_n>n^{1/2}$ for all sufficiently large $n$ and $m_n=o(n)$ as $n\to\infty$.
\end{itemize}
Here is our first main result.
\begin{theorem}\label{thm:main_result1}
Let $(B(t))_{t\geq 0}$ be a standard Brownian motion.
\begin{itemize}
\item[(i)] If (A) holds, then
$$
\left(\frac{\log\LCM(B_n(\lfloor m_nt\rfloor))-c_n(\lfloor m_n t\rfloor)}{\sqrt{~2^{-1}m_n}\log m_n}\right)_{t\geq 0}
\tofd (B(t))_{t\geq 0}.
$$
\item[(ii)] If (B) holds and $m_n=O(n(\log n)^{-1})$, then
$$
\left(\frac{\log\LCM(B_n(\lfloor m_n t\rfloor))-c_n( -n\log (1-(m_n t)/n) )}{\sqrt{2^{-1}m_n(\log n-\log m_n)(3\log m_n-\log n)}}\right)_{t\geq 0}
\tofd (B(t))_{t\geq 0}.
$$
\end{itemize}
\end{theorem}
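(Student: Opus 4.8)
The plan is to deduce Theorem~\ref{thm:main_result1} from Theorem~\ref{thm:main_result2} using the pathwise construction of Section~\ref{sec:coupon_collector}. Since the least common multiple of a family of integers depends only on the set of distinct values occurring in it, \eqref{eq:redefinition} yields, with $Y_n(k):=\log\LCM(U_1^{(n)},\ldots,U_k^{(n)})$,
\[
\log\LCM(B_n(m))=Y_n\big(\tau^{(n)}(m)\big),\qquad 1\le m\le n,
\]
so that the only difference between the processes of Theorems~\ref{thm:main_result1} and \ref{thm:main_result2} is the replacement of the deterministic index $\lfloor m_nt\rfloor$ by the coupon--collector index $\tau^{(n)}(\lfloor m_nt\rfloor)$. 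I would fix $k\in\N$ and $0\le t_1<\cdots<t_k$, put $M_i:=\lfloor m_nt_i\rfloor$, establish the one-dimensional statement at each $t_i$ with all error terms $o_{\mathbb{P}}(1)$ after normalising by the relevant denominator $s_n$, and then obtain the asserted joint convergence from the joint form of Theorem~\ref{thm:main_result2} together with Slutsky's lemma.

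In régime (A), where $m_n\le n^{1/2}$, this is quick. By \eqref{eq:tau_sum_representation} and the elementary estimates of Section~\ref{sec:coupon_aux}, $\tau^{(n)}(M_i)\ge M_i$ and $\E[\tau^{(n)}(M_i)-M_i]=\sum_{\ell=1}^{M_i}\tfrac{\ell-1}{n-\ell+1}=O(m_n^2/n)$; since $Y_n$ is nondecreasing and each further draw raises $\log\LCM$ by at most $\log n$,
\[
0\le Y_n\big(\tau^{(n)}(M_i)\big)-Y_n(M_i)\le\big(\tau^{(n)}(M_i)-M_i\big)\log n,
\]
whose mean $O(m_n^2\log n/n)$ is $o(\sqrt{m_n}\log m_n)$ because $m_n\le n^{1/2}$ and $m_n\to\infty$. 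Hence $Y_n(\tau^{(n)}(M_i))-Y_n(M_i)=o_{\mathbb{P}}(s_n)$, and part~(i) follows from Theorem~\ref{thm:main_result2} (régime~(A)).

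Régime (B), where $m_n=O(n(\log n)^{-1})$, is the substantive case. Now $\E[\tau^{(n)}(M_i)-M_i]\asymp m_n^2/n$ is no longer $o(s_n/\log n)$, which is exactly why part~(ii) recentres at $c_n$ evaluated at $a_n(t):=-n\log(1-(m_nt)/n)$, a deterministic surrogate for $\E[\tau^{(n)}(\lfloor m_nt\rfloor)]=n(H_n-H_{n-\lfloor m_nt\rfloor})=a_n(t)+O(1)$, with $H_j$ the $j$-th harmonic number. Writing $V_n(k):=Y_n(k)-c_n(k)$, I split $Y_n(\tau^{(n)}(M_i))-c_n(a_n(t_i))$ as
\[
V_n(M_i)\;+\;\big(V_n(\tau^{(n)}(M_i))-V_n(M_i)\big)\;+\;\big(c_n(\tau^{(n)}(M_i))-c_n(a_n(t_i))\big).
\]
Divided by $s_n$, the first summand tends to $B(t_i)$ by Theorem~\ref{thm:main_result2}. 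The third is negligible: from $\Var(\tau^{(n)}(M_i))=\sum_{\ell=1}^{M_i}\tfrac{(\ell-1)/n}{(1-(\ell-1)/n)^2}=O(m_n^2/n)$ and $\E[\tau^{(n)}(M_i)]=a_n(t_i)+O(1)$ one gets $|\tau^{(n)}(M_i)-a_n(t_i)|=O_{\mathbb{P}}(m_nn^{-1/2})$, whence, since $\sup_{y\ge0}c_n'(y)=O(\log n)$ (Mertens), $|c_n(\tau^{(n)}(M_i))-c_n(a_n(t_i))|=O_{\mathbb{P}}(m_nn^{-1/2}\log n)=o_{\mathbb{P}}(s_n)$, the last step using $m_n=O(n(\log n)^{-1})$.

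The middle summand is the crux and, I expect, the main obstacle. It is the increment of the centred process $V_n$ over a \emph{random} window of length of order $m_n^2/n$; for $m_n$ close to $n/\log n$ this window is comparable to $m_n$, so the separate increments of $Y_n$ and of $c_n$ across it are individually much larger than $s_n$ and only their near-cancellation is usable. I would control it by a Doob decomposition along the draws: with $\mathcal F_j:=\sigma(U_1^{(n)},\ldots,U_j^{(n)})$ and $\tau:=\tau^{(n)}(M_i)$, write $V_n(\tau)-V_n(M_i)=\sum_{j=M_i+1}^{\tau}(\eta_j+\zeta_j)$, where $\zeta_j:=\E[V_n(j)-V_n(j-1)\mid\mathcal F_{j-1}]$ is $\mathcal F_{j-1}$-measurable and $\eta_j$ a martingale difference. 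The martingale part is harmless: optional stopping and the trivial bound $0\le Y_n(j)-Y_n(j-1)\le\log n$ give $\E[(\sum_{j=M_i+1}^{\tau}\eta_j)^2]\le(\log n)^2\E[\tau-M_i]=O((\log n)^2m_n^2/n)=o(s_n^2)$. For the predictable part one uses the split $Y_n=S_n+R_n$ into the ``squarefree'' approximation $S_n(k):=\sum_{p\le n}\log p\cdot\1\{p\mid\LCM(U_1^{(n)},\ldots,U_k^{(n)})\}$, which has $\E S_n(k)=c_n(k)$, and the multiplicity correction $R_n(k):=\sum_{p\le n}\log p\,(\max_{j\le k}\lambda_p(U_j^{(n)})-1)^+$; correspondingly $\zeta_j=\zeta_j^{R}+\zeta_j^{S}$, where $\zeta_j^{R}\ge 0$ and $\zeta_j^{S}=\sum_{p\le n}\log p\,\tfrac{\lfloor n/p\rfloor}{n}\big(\1\{p\nmid\LCM(U_1^{(n)},\ldots,U_{j-1}^{(n)})\}-(1-\tfrac{\lfloor n/p\rfloor}{n})^{j-1}\big)$ has mean zero. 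Cutting at the event $\{\tau\le M_i+Km_n^2/n\}$, which by Chebyshev has probability $\ge 1-\varepsilon$ once $K=K(\varepsilon)$ is large, one estimates $\E R_n(M_i+\lceil Km_n^2/n\rceil)-\E R_n(M_i)=O(m_n^{3/2}/n)$ (a routine consequence of Lemma~\ref{lem:conv_to_geom} and the prime number theorem, giving $\E R_n(k)=\Theta(\sqrt k)$) and the sum of $\E|\zeta_j^{S}|$ over the window through the second moment of $\zeta_j^{S}$ --- and \emph{here} is the real difficulty: one must show that the cross-prime correlations entering $\Var(\zeta_j^{S})$, which stem only from the floors $\lfloor n/p\rfloor$ in \eqref{eq:uniform_distribution}, essentially cancel, so that $\Var(\zeta_j^{S})$ takes its diagonal order $O((\log j)/j)$; this is precisely the sort of arithmetic second-moment estimate underlying the central moment computations of Section~\ref{sec:central_moments}, and with it the window sum of $\E|\zeta_j^{S}|$ is $o(s_n)$. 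Both estimates render the predictable part $o_{\mathbb{P}}(s_n)$. Collecting the three summands, and carrying out the argument simultaneously over $t_1,\ldots,t_k$ with the joint statement of Theorem~\ref{thm:main_result2} and Slutsky's lemma, completes the proof; the hypothesis $m_n=O(n(\log n)^{-1})$ --- which guarantees $\log(n/m_n)\to\infty$ --- is used both here and in the invoked Theorem~\ref{thm:main_result2}.
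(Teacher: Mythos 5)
Your part (i) is correct and is essentially the paper's argument: the Lipschitz bound $0\le Y_n(\tau^{(n)}(M))-Y_n(M)\le(\tau^{(n)}(M)-M)\log n$ together with $\E[\tau^{(n)}(M)-M]=O(m_n^2/n)$ gives $o(\sqrt{m_n}\log m_n)$ in $L^1$ when $m_n\le n^{1/2}$.

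For part (ii), however, your proposal has a genuine gap, and the gap is created by the choice of anchor point. You decompose around the index $M_i=\lfloor m_nt_i\rfloor$, so your middle term is the increment of the centred process $V_n=Y_n-c_n$ over the random window $[M_i,\tau^{(n)}(M_i)]$, whose length is of order $m_n^2/n$ (the \emph{bias} of $\tau^{(n)}$). This forces you into the Doob decomposition, and the predictable part of that decomposition hinges on the estimate $\Var(\zeta_j^{S})=O((\log j)/j)$, which you explicitly leave unproved ("here is the real difficulty"). It is indeed a difficulty: the crude bound $|\Cov(\1_{\{p\nmid\cdot\}},\1_{\{q\nmid\cdot\}})|\le j/n$ summed over $p\ne q$ gives $j\log^2n/n$, which for $j\asymp m_n$ near $n/\log n$ vastly exceeds the diagonal order $(\log j)/j$, so a genuine cancellation argument is required and is missing. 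Your claimed increment bound $\E R_n(M_i+\lceil Km_n^2/n\rceil)-\E R_n(M_i)=O(m_n^{3/2}/n)$ is also not a consequence of $\E R_n(k)=\Theta(\sqrt k)$ alone (though the weaker bound $O(\sqrt{m_n})=o(\sqrt{a_n})$, which follows from Lemma \ref{lem:red1}, would suffice there). The paper avoids the entire random-window problem by anchoring at the \emph{deterministic} index $l=\lfloor\E\tau^{(n)}(\lfloor m_nt\rfloor)\rfloor$ instead of at $\lfloor m_nt\rfloor$: then only the \emph{fluctuation} of $\tau^{(n)}$ enters, $\E|\tau^{(n)}(\lfloor m_nt\rfloor)-l|\le 1+(\Var\tau^{(n)}(\lfloor m_nt\rfloor))^{1/2}=O(m_n n^{-1/2})$, and the same crude Lipschitz bound already yields $\E|Y_n(\tau^{(n)}(\lfloor m_nt\rfloor))-Y_n(l)|=O(m_nn^{-1/2}\log n)=o(\sqrt{a_n})$ precisely under the hypothesis $m_n=O(n(\log n)^{-1})$ — no cancellation, martingale, or second-moment arithmetic needed. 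The remaining convergence of $a_n^{-1/2}(Y_n(l)-c_n(-n\log(1-(m_nt)/n)))$ is then a deterministic-index statement, obtained by rerunning Lemmas \ref{lem:red1}, \ref{red3}, \ref{red4} and \ref{lem:limit} with the index $l=O(m_n)$ and checking that the extra $O(m_n^2/n)$ i.i.d.\ summands contribute variance $O(n^{-1}m_n^2\Var(\log\widetilde U_1^{(n,m_n)}))=o(a_n)$. I recommend you restructure your part (ii) along these lines; as written, the middle term of your decomposition is not proved.
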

Put
$$
Y_n(m):=\log \LCM (U_1^{(n)},U_2^{(n)},\ldots,U^{(n)}_{m}),\quad 1\leq m\leq n
$$
and note that
\begin{multline}\label{eq:random_change_of_time}
\log\LCM(B_n(\lfloor m_n t\rfloor))= \log \LCM (U_1^{(n)},U_2^{(n)},\ldots,U^{(n)}_{\tau^{(n)}(\lfloor m_n t\rfloor)})\\
=Y_n(\tau^{(n)}(\lfloor m_n t\rfloor)),\quad t\geq 0.
\end{multline}
We deduce Theorem \ref{thm:main_result1} from the following result, a counterpart of Proposition \ref{prop:fixed_m} for diverging $m_n$, which is interesting on its own.
\begin{theorem}\label{thm:main_result2}
Let $(B(t))_{t\geq 0}$ be a standard Brownian motion.
\begin{itemize}
\item[(i)] If (A) holds, then
$$
\left(\frac{Y_n(\lfloor m_n t\rfloor)-c_n(\lfloor m_n t\rfloor)}{\sqrt{~2^{-1}m_n}\log m_n}\right)_{t\geq 0}
\tofd (B(t))_{t\geq 0}.
$$
\item[(ii)] If (B) holds, then
$$
\left(\frac{Y_n(\lfloor m_n t\rfloor)-c_n(\lfloor m_n t\rfloor)}{\sqrt{2^{-1}m_n(\log n-\log m_n)(3\log m_n-\log n)}}\right)_{t\geq 0}
\tofd (B(t))_{t\geq 0}.$$
\end{itemize}

\end{theorem}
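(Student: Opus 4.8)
\emph{Strategy.} I would deduce Theorem~\ref{thm:main_result2} from a central limit theorem for a sum of i.i.d.\ random variables, following the scheme announced in the abstract. Write $N_p=N_p(m):=\#\{1\le k\le m:\ p\mid U_k^{(n)}\}$, which is binomial with parameters $m$ and $n^{-1}\lfloor n/p\rfloor$, and split
$$
Y_n(m)=\sum_{p\le n}\log p\,\max_{1\le k\le m}\lambda_p\bigl(U_k^{(n)}\bigr)=\widetilde Y_n(m)+D_n(m),\qquad
\widetilde Y_n(m):=\sum_{p\le n}\log p\,\mathbbm 1\{N_p\ge 1\},
$$
where $\widetilde Y_n(m)$ is the logarithm of the product of the primes actually dividing the sample, each counted once, and $D_n(m)=\sum_{j\ge 2}\sum_{p}\log p\,\mathbbm 1\{\exists k:\ p^{j}\mid U_k^{(n)}\}\ge 0$. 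One first checks the exact identity $c_n(m)=\E\widetilde Y_n(m)$. For the error, $1-(1-x)^m\le mx$ gives $\E D_n(m)\le m\sum_p\log p/(p(p-1))=O(m)$, and after cancellation against $\psi(n)-\vartheta(n)$ one gets the sharper $\E D_n(m)=O(\sqrt m)$; an Efron--Stein estimate using $|D_n(m)-D_n^{(k)}(m)|\le R(U_k^{(n)})+R({U'_k}^{(n)})$ with $R(a)=\log\!\bigl(a/\mathrm{rad}(a)\bigr)$ and $\E R(U_1^{(n)})^2=O(1)$ yields $\Var D_n(m)=O(m)$. Hence $D_n(\lfloor m_nt\rfloor)$ divided by the normalisation tends to $0$ in $L^2$ in both cases, and it remains to analyse $\widetilde Y_n(m)-\E\widetilde Y_n(m)$.

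\emph{Reduction to an i.i.d.\ sum.} Since $N_p-\mathbbm 1\{N_p\ge1\}=(N_p-1)^+$,
$$
\widetilde Y_n(m)=\sum_{k=1}^{m}\log\mathrm{rad}\bigl(U_k^{(n)}\bigr)-\sum_{p\le n}\log p\,(N_p-1)^{+},
$$
and splitting $\sum_{p\le n}\log p\,N_p=\sum_{p<m}\log p\,N_p+\sum_{m\le p\le n}\log p\,N_p$ one sees that the \emph{small-prime} sums $\sum_{p<m}\log p\,N_p$ cancel, leaving
$$
\widetilde Y_n(m)=\vartheta(m)+\sum_{k=1}^{m}L_k-\sum_{p<m}\log p\,\mathbbm 1\{N_p=0\}-\sum_{m\le p\le n}\log p\,(N_p-1)^{+},
\qquad L_k:=\!\!\sum_{\substack{m\le p\le n\\ p\mid U_k^{(n)}}}\!\!\log p,
$$
where the $L_k$ (the ``large-prime radical parts'' of the $U_k^{(n)}$) are i.i.d. In the relevant regime the last two sums are essentially supported on primes of order $m$, and a direct second-moment computation bounds the variance of each by $O(m\log m)$, which is of strictly smaller order than the square of either normalising constant (for (B) this uses $(\log n-\log m)(3\log m-\log n)\gg\log m$). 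Therefore
$$
\widetilde Y_n(m)-\E\widetilde Y_n(m)=\sum_{k=1}^{m}\bigl(L_k-\E L_k\bigr)+o_{\mathbb P}(b_n),
$$
$b_n$ being the denominator in the statement; the same holds, jointly, for the finitely many times $\lfloor m_n t_i\rfloor$.

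\emph{The variance.} Since $\Var\bigl(\sum_{k\le m}(L_k-\E L_k)\bigr)=m\,\Var L^{(n)}$ with $L^{(n)}:=\sum_{m\le p\le n,\,p\mid U^{(n)}}\log p$, the heart of the proof is to show $\Var L^{(n)}\sim\tfrac12\log^2m$ under (A) and $\Var L^{(n)}\sim\tfrac12(\log n-\log m)(3\log m-\log n)$ under (B). Decomposing $\log\mathrm{rad}(U^{(n)})=S^{(n)}+L^{(n)}$ with $S^{(n)}=\sum_{p<m,\,p\mid U^{(n)}}\log p$, and using that $\Var\log\mathrm{rad}(U^{(n)})=O(1)$ (because $\log\mathrm{rad}=\log U^{(n)}-R(U^{(n)})$, $\Var\log U^{(n)}\to1$ and $\E R(U^{(n)})^2=O(1)$), one gets $\Var L^{(n)}=\Var S^{(n)}\,(1+o(1))$. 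Now
$$
\Var S^{(n)}=\sum_{p<m}\log^2 p\,\tfrac{\lfloor n/p\rfloor}{n}\Bigl(1-\tfrac{\lfloor n/p\rfloor}{n}\Bigr)+\sum_{p\ne q<m}\log p\log q\,\operatorname{Cov}\bigl(\mathbbm 1\{p\mid U^{(n)}\},\mathbbm 1\{q\mid U^{(n)}\}\bigr);
$$
the diagonal is $\sim\tfrac12\log^2m$ by $\sum_{p\le x}\log^2p/p\sim\tfrac12\log^2x$, while $\operatorname{Cov}=n^{-1}\lfloor n/pq\rfloor-n^{-2}\lfloor n/p\rfloor\lfloor n/q\rfloor$ is $O(1/n)$ when $pq\le n$ and $-n^{-2}\lfloor n/p\rfloor\lfloor n/q\rfloor\asymp-1/(pq)$ when $pq>n$. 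Under (A) every pair of primes $<m\le n^{1/2}$ satisfies $pq\le n$, so the off-diagonal is $O(\vartheta(m)^2/n)=O(1)$. Under (B) the pairs $pq>n$ contribute, after the change of variables $p=(n/m)e^{u}$ and Mertens' estimate, the term $-\tfrac12(2\log m-\log n)^2$ up to lower order, and $\tfrac12\log^2m-\tfrac12(2\log m-\log n)^2=\tfrac12(\log n-\log m)(3\log m-\log n)$; the residual $pq\le n$ off-diagonal is shown negligible by exploiting the cancellation between $n^{-1}\lfloor n/pq\rfloor$ and its principal term. This is the step I expect to be the main obstacle: it is a somewhat delicate analytic-number-theoretic estimate and it is exactly where the dichotomy (A)/(B) and the factor $(\log n-\log m)(3\log m-\log n)$ come from.

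\emph{Passage to the limit.} Replacing the $t_i$-dependent cut-off $\lfloor m_nt_i\rfloor$ in the $L_k$ by the common cut-off $m_n$ is legitimate, the discrepancy $\sum_{k\le m_n}\sum_{\lfloor m_nt_i\rfloor\le p<m_n}\log p\,\mathbbm 1\{p\mid U_k^{(n)}\}$ having variance $O(m_n\log m_n)=o(b_n^2)$ since $\log m_n-\log(m_nt_i)=O(1)$. Thus for $0<t_1<\dots<t_r$,
$$
\Bigl(b_n^{-1}\bigl(Y_n(\lfloor m_nt_i\rfloor)-c_n(\lfloor m_nt_i\rfloor)\bigr)\Bigr)_{i}
=\Bigl(b_n^{-1}\sum_{k\le\lfloor m_nt_i\rfloor}(\widehat L_k-\E\widehat L_k)\Bigr)_{i}+o_{\mathbb P}(1),
$$
with $\widehat L_k=\sum_{m_n\le p\le n,\,p\mid U_k^{(n)}}\log p$ i.i.d.\ and $m_n\Var\widehat L^{(n)}\sim b_n^2$. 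The increments of this rescaled random walk over disjoint blocks $(\lfloor m_nt_{i-1}\rfloor,\lfloor m_nt_i\rfloor]$ are independent, each a sum of $\sim m_n(t_i-t_{i-1})$ i.i.d.\ summands with $0\le\widehat L_k\le\min(\vartheta(m_n),\log n)$; the Lindeberg condition holds because the truncated second moment is at most $(\min(\vartheta(m_n),\log n))^2\,\mathbb P\{\widehat L^{(n)}>\varepsilon b_n\}/\Var\widehat L^{(n)}$, which tends to $0$ either since $b_n\gg\log n$ (case (B)) or, in case (A), from a Chernoff bound $\mathbb P\{\widehat L^{(n)}>t\}\le\exp(-ct\log t/\log m_n)$ beating the $\vartheta(m_n)^2=O(m_n^2)$ prefactor (using $m_n\to\infty$). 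Hence each normalised increment converges to $N(0,t_i-t_{i-1})$, jointly independent, so the vector converges to the Gaussian law with covariance $t_i\wedge t_j$, i.e.\ to $(B(t_i))_{i}$. (Theorem~\ref{thm:main_result1} then follows by inserting the coupon-collector time change $\tau^{(n)}(\lfloor m_nt\rfloor)\approx-n\log(1-m_nt/n)$, but that is a separate matter.)
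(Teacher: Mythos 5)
Your architecture is essentially the paper's: you approximate $Y_n(m)$ by the ``radical'' sum $\sum_{p\le n}\log p\,\1_{\{N_p\ge 1\}}$ with an $O(\sqrt m)$ expectation bound on the nonnegative error (Lemma \ref{lem:red1}; the Efron--Stein variance bound is superfluous, Markov suffices), you identify $c_n(m)$ as its mean, you discard the primes $p\le m$ and replace $\1_{\{N_p\ge1\}}$ by $N_p$ for $p>m$ at a variance cost $O(m\log m)$ (Lemmas \ref{red3} and \ref{red4}; your $(N_p-1)^+$ is exactly the paper's correction term), and you land on the i.i.d.\ sum of $L_k=\log\widetilde U_k^{(n,m_n)}$ with $\Var L^{(n)}\sim a_n/m_n$ (Theorems \ref{lem:variance} and \ref{lem:variance2}). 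One place where the paper's route is genuinely cleaner: in case (B) you propose to compute $\Var S^{(n)}$ over the \emph{small} primes $p<m$, which forces you to control the off-diagonal pairs with $pq\le n$, where the fractional parts in $n^{-1}\lfloor n/(pq)\rfloor-n^{-2}\lfloor n/p\rfloor\lfloor n/q\rfloor$ do not cancel; this is salvageable (the error is $O(\log n)$ because $\sum_{pq\le n}\log p\log q=O(n\log n)$, and $\log n=o((\log n-\log m_n)(3\log m_n-\log n))$), but the paper instead computes the variance of the \emph{large}-prime part directly, where the off-diagonal vanishes identically since $\lfloor n/(pq)\rfloor=0$ for $p,q>m_n>n^{1/2}$ --- the ``delicate analytic estimate'' you flag as the main obstacle simply never arises.

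There is, however, one step that fails as written: your Lindeberg verification in case (A). The tail bound $\P\{\widehat L^{(n)}>t\}\le\exp(-ct\log t/\log m_n)$ is false: for $t=\tfrac12\log n$ one has $\P\{\widehat L^{(n)}\ge t\}\ge\P\{U^{(n)}\text{ has a prime factor exceeding }n^{1/2}\}\to\log 2$, since any such prime lies in $(m_n,n]$ under (A); more relevantly, the centered lower tail $\P\{\widehat L^{(n)}<\E\widehat L^{(n)}-\varepsilon b_n\}$ (governed by the event that $U^{(n)}$ has an abnormally large $m_n$-smooth part) is not exponentially small in $\varepsilon b_n$ at the rate you claim. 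Without that bound, your prefactor $(\min(\vartheta(m_n),\log n))^2$ combined with Chebyshev only gives $O(\min(m_n^2,\log^2 n)/m_n)$, which is not $o(\log^2 m_n)$ when $m_n$ grows slowly relative to $\log n$ --- and case (A) allows arbitrarily slow growth of $m_n$. The paper closes this exact hole by proving the fourth central moment bound $\E\bigl(\log\widetilde U^{(n,m_n)}-\E\log\widetilde U^{(n,m_n)}\bigr)^4=O(\log^4 m_n)$ (Lemma \ref{lem:fourth_moment}) and then applying Cauchy--Schwarz together with Chebyshev, which yields the Lindeberg condition at rate $O(m_n^{-1/2}\log^2 m_n)=o(\log^2 m_n)$ uniformly over all $m_n\to\infty$. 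You should replace your Chernoff step by such a moment bound (the computation is a four-fold version of your variance calculation and presents no new difficulties).
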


\subsection{Strategy of proof of Theorem \ref{thm:main_result2}}\label{subsec:strategy}

For some samples $\{c^{(n)}_1,\ldots, c^{(n)}_{m_n}\}$ of random integers taking values in $[n]$ the logarithm of the least common multiple
\begin{equation}\label{lcm}
\log \LCM(\{c_1^{(n)},\ldots, c^{(n)}_{m_n}\})=\sum_{p\leq n}\log p\,\max_{1\leq k\leq m_n}\,\lambda_p(c^{(n)}_k)
\end{equation}
may be well-approximated by $\log \prod_{k=1}^{m_n}c_k^{(n)}$. For instance, this is known to be the case when $\{c_1^{(n)},\ldots, c_{m_n}^{(n)}\}$ are the cycle lengths (with $m_n$ being the total number of cycles) of a wide class of random permutations including Ewens' permutations, see \cite{Gnedin+Iksanov+Marynych:2012,Storm+Zeindler:2015}. Intuitively, such an approximation is successful provided that `most' of the values among $c^{(n)}_1,\ldots, c^{(n)}_{m_n}$ are distinct and `most' of the positive multiplicities $\lambda_p(c_k^{(n)})$, $p\in\mathcal{P}$, $k=1,\ldots, m_n$ are ones. Of course, many samples do not enjoy these properties and particularly neither do $\{U_1^{(n)},\ldots, U_{m_n}^{(n)}\}$ that we are focused on.

Roughly speaking, the previous approximation argument is based on comparison of $\max_{1\leq k\leq m_n}\,\lambda_p(c^{(n)}_k)$ and $\sum_{k=1}^{m_n}\lambda_p(c^{(n)}_k)$. However, it seems that in many cases $\max_{1\leq k\leq m_n}\,\lambda_p(c^{(n)}_k)$ should be closer to $\sum_{k=1}^{m_n}\1_{\{\lambda_p(c^{(n)}_k)\geq 1\}}$ rather than to $\sum_{k=1}^{m_n}\lambda_p(c^{(n)}_k)$, and our strategy is to exploit this line of reasoning. We shall show in Lemma \ref{lem:red1} that $Y_n(\lfloor m_n t\rfloor)=\log \LCM(U_1^{(n)},\ldots, U^{(n)}_{\lfloor m_n t\rfloor})$ is well-approximated by $\sum_{p\leq n}\log p\, \1_{\{\max_{1\leq k\leq \lfloor m_n t\rfloor}\lambda_p(U_k^{(n)})\geq 1\}}$. Furthermore, we shall prove in Lemmas \ref{red3} and \ref{red4}, respectively, that `small' primes $p\leq m_n$ do not give significant contribution to $Y_n(\lfloor m_n t\rfloor)$ and that in the range of `large' primes $p>m_n$ the indicators $\1_{\{\max_{1\leq k\leq \lfloor m_n t\rfloor}\lambda_p(U_k^{(n)})\geq 1\}}$ can be safely replaced by $\sum_{k=1}^{\lfloor m_n t\rfloor}\1_{\{\lambda_p(U_k^{(n)})\geq 1\}}$. Summarizing, we are going to approximate $Y_n(\lfloor m_n t\rfloor)$ by
$$
\sum_{m_n<p\leq n}\log p\sum_{k=1}^{\lfloor m_n t\rfloor}\1_{\{\lambda_p(U_k^{(n)})\geq 1\}}=\sum_{k=1}^{\lfloor m_n t\rfloor}\sum_{m_n<p\leq n}\log p\cdot \1_{\{\lambda_p(U_k^{(n)})\geq 1\}}
$$ which is the sum of independent random variables. A limit theorem for the approximating processes is given in Lemma \ref{lem:limit}.

We close this section by a discussion of inefficiency of a tempting alternative approach. A specialization of formula \eqref{lcm} reads 
$$
Y_n(\lfloor m_n t\rfloor)=\sum_{p\leq n}\log p \max_{1\leq k\leq \lfloor m_n t\rfloor}\lambda_p(U_k^{(n)}),\quad t\geq 0.
$$
As far as a proof of Theorem \ref{thm:main_result2} is concerned, a naive idea justified in part by  Lemma \ref{lem:conv_to_geom} is to replace the terms $\lambda_p(U_k^{(n)})$ with their limits $\mathcal{G}_k(p)$ and to approximate $(Y_n(\lfloor m_n t\rfloor))_{t\geq 0}$ by $(\widehat Y_n(\lfloor m_n t\rfloor))_{t\geq 0}$, where
\begin{equation}\label{eq:y_n_geom_def}
\widehat{Y}_n(\lfloor m_n t\rfloor)=\sum_{p\leq n}\log p \max_{1\leq k\leq \lfloor m_n t\rfloor}\mathcal{G}_k(p),\quad t\geq 0.
\end{equation}
The right-hand side is the sum of independent random variables which ensures that the analysis of $\widehat{Y}_n(\lfloor m_n t\rfloor)$ is simple. 
It turns out that unless $\log n/\log m_n\to 1$, that is, $m_n$ is rather close to $n$, this intuition is {\it wrong} in a sense that the limit relation in Theorem \ref{thm:main_result2} does not hold with $Y_n$ replaced by $\widehat{Y}_n$. The details can be found in Proposition \ref{prop:geom} given in the Appendix. This is an unexpected and peculiar phenomenon because most of the known results in probabilistic number theory involving discrete
uniform random variables can be proved using the formal substitution $\lambda_p(U^{(n)})\mapsto \mathcal{G}(p)$. The list includes:
\begin{itemize}
\item[(i)] the Hardy--Ramanujan central limit theorem for the number of prime divisors;
\item[(ii)] the Erd\H{o}s--Kac central limit theorem for strongly additive functions \cite{Erdos+Kac:1940};
\item[(iii)] the functional central limit theorem for the counts of prime factors \cite{Bill:74};
\item[(iv)] the Kubilius theorems on convergence to infinitely divisible laws \cite[Section 5]{Kubilius:1964};
\item[(v)] Proposition \ref{prop:fixed_m} of the present paper.
\end{itemize}
A detailed discussion, proofs and further examples can be found in \cite{Arratia:2002} and in Section 1.2 of \cite{Arratia+Barbour+Tavare:2003}. In particular, an optimal coupling between $(\lambda_p(U^{(n)})_{p\in\mathcal{P}}$ and $(\mathcal{G}(p))_{p\in\mathcal{P}}$ is constructed in \cite{Arratia:2002}. A partial explanation of the inefficiency of this approach in our situation is that the cumulative error caused by replacing $\lambda_p(U_k^{(n)})$ by $\mathcal{G}_k(p)$, $k=1,\ldots,m_n$, which is negligible when the number $m_n$ of such replacements is bounded (as in examples (i)-(v) above), becomes significant with a growth of the sample.

\section{Some auxiliary results related to the coupon collector problem}\label{sec:coupon_aux}

In this section we discuss the asymptotic behaviour of the stopping time $\tau^{(n)}(m_n)$ as $n\to\infty$. According to formula \eqref{eq:random_change_of_time}, this information is of principal importance for deducing  Theorem \ref{thm:main_result1} from Theorem \ref{thm:main_result2}. Using \eqref{eq:tau_sum_representation} we infer
\begin{equation}\label{eq:tau_expectation}
\E \tau^{(n)}(m)=\sum_{k=1}^{m}\E X_{k,n}=\sum_{i=1}^{m}n(n-i+1)^{-1}=n(H_n-H_{n-m}),
\end{equation}
where $H_n:=\sum_{k=1}^{n}k^{-1}$ is the $n$th harmonic number. Moreover,
\begin{multline*}
\Var \tau^{(n)}(m)=\sum_{k=1}^{m}\Var X_{k,n}=\sum_{k=1}^m ((k-1)/n)((n-k+1)/n)^{-2}\\
=n\sum_{k=n-m+1}^n k^{-2}(n-k)=n^2(H_{n,2}-H_{n-m,2})-n(H_n-H_{n-m}),
\end{multline*}
where $H_{n,2}:=\sum_{k=1}^n k^{-2}$. 

Assume now that $m=m_n<n$ depends on $n$ is such a way that $m_n=o(n)$. By using the standard expansions
$$H_n=\log n+\gamma+(2n)^{-1}+O(n^{-2}),\quad H_{n,2}=\zeta(2)-n^{-1}+2^{-1}n^{-2}+O(n^{-3}),$$ where $\gamma$ is the Euler-Mascheroni constant,
we obtain
\begin{equation}\label{eq:tau_expansion0}
\E \tau^{(n)}(m_n)=m_n+O\left(m_n^2/n\right),\quad \Var \tau^{(n)}(m_n)=O(m_n^2/n).
\end{equation}
In particular, if $m_n=o(n^{1/2})$, then
$$
\tau^{(n)}(m_n)-m_n\toprobab 0,
$$
or, in other words,
\begin{equation}\label{eq:tau_to_m_convergence}
\lim_{n\to\infty}\P\{\tau^{(n)}(m_n)=m_n\}=1.
\end{equation}

\section{Asymptotics of the central moments \texorpdfstring{of $\sum_{m_n<p\leq n}\log p\cdot \1_{\{\lambda_p(U_k^{(n)})\geq 1\}}$}{}}\label{sec:central_moments}

For $n\in\N$ and $1\leq m<n$, put
$$
\widetilde{U}^{(n,m)}=\prod_{m<p\leq n} p^{\1_{\{\lambda_p(U^{(n)})\geq 1\}}}.$$ In this section we investigate the
behavior of $\E \Big(\log \widetilde{U}^{(n,m_n)}-\E \log \widetilde{U}^{(n,m_n)}\Big)^{2s}$ for $s=1,2$ as $n\to\infty$. The results obtained here
are an important ingredient in the proof of Theorem \ref{thm:main_result2}.

\subsection{Auxiliary results}
We start with several auxiliary facts.
\begin{lemma}\label{lem:log_uniform_moments}
For $s\in\N$,
$$
\lim_{n\to\infty}\E \left(\log U^{(n)}-\E \log U^{(n)}\right)^{2s}=\E (\mathcal{E}_1-\E\mathcal{E}_1)^{2s}=\E (\mathcal{E}_1-1)^{2s},
$$
where $\mathcal{E}_1$ is a random variable with the exponential distribution of unit mean.
\end{lemma}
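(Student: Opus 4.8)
The plan is to reduce the statement to the asymptotics of the non-centered moments of $\log(U^{(n)}/n)$, which are elementary Riemann sums. Write $V_n:=\log(U^{(n)}/n)=\log U^{(n)}-\log n$. Centering by the mean is unaffected by a deterministic shift, so
$$
\E\bigl(\log U^{(n)}-\E\log U^{(n)}\bigr)^{2s}=\E\bigl(V_n-\E V_n\bigr)^{2s}=\sum_{j=0}^{2s}\binom{2s}{j}(-\E V_n)^{2s-j}\,\E\bigl(V_n^{\,j}\bigr),
$$
and it therefore suffices to identify $\lim_{n\to\infty}\E(V_n^{\,k})$ for $k=0,1,\ldots,2s$.

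The key computation is that for every fixed $k\in\N_0$,
$$
\E\bigl(V_n^{\,k}\bigr)=\frac1n\sum_{j=1}^{n}\bigl(\log(j/n)\bigr)^{k}\ \longrightarrow\ \int_0^1(\log x)^k\,{\rm d}x=(-1)^k k!,
$$
the last equality following from the substitution $x=e^{-t}$. I would stress that the only subtle point is that $x\mapsto|\log x|^k$ is \emph{unbounded} at the origin, so this is not the Riemann sum of a bounded continuous function; however $x\mapsto|\log x|^k$ is monotone on $(0,1]$ and Lebesgue-integrable there, hence squeezing $\tfrac1n\sum_{j=2}^n(\log(j/n))^k$ between $\int_{1/n}^{1}(\log x)^k\,{\rm d}x$ and its shift by one mesh step gives the convergence for the tail $j\ge2$, while the single omitted term $\tfrac1n\bigl(\log(1/n)\bigr)^k=(-1)^k n^{-1}(\log n)^k$ tends to $0$. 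In particular $\E V_n\to-1$.

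Finally I would pass to the limit termwise in the binomial expansion (legitimate, finitely many terms, each factor convergent), obtaining
$$
\lim_{n\to\infty}\E\bigl(V_n-\E V_n\bigr)^{2s}=\sum_{j=0}^{2s}\binom{2s}{j}\cdot 1^{2s-j}\cdot(-1)^j j!=\sum_{j=0}^{2s}\binom{2s}{j}(-1)^j j!,
$$
and recognise the right-hand side as $\E(\mathcal{E}_1-1)^{2s}$: since $\E\mathcal{E}_1^{\,j}=j!$ and $\E\mathcal{E}_1=1$, the binomial expansion of $(\mathcal{E}_1-1)^{2s}$ equals $\sum_{j=0}^{2s}\binom{2s}{j}(-1)^{2s-j}j!=\sum_{j=0}^{2s}\binom{2s}{j}(-1)^{j}j!$, which matches. (Alternatively, $V_n\todistr-\mathcal{E}_1$ by Lemma~\ref{lem:conv_to_geom}\ref{itt:2} and the continuous mapping theorem, whence $V_n-\E V_n\todistr 1-\mathcal{E}_1$; this upgrades to convergence of $2s$-th moments via uniform integrability, which is exactly the boundedness of $\E|V_n|^{2s+1}$, once more a convergent Riemann sum.) There is no genuine obstacle here; the only non-routine ingredient is the handling of the logarithmic singularity at the origin in the Riemann sums, dealt with by the monotonicity argument above.
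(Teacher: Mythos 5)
Your proof is correct and follows essentially the same route as the paper's: a direct computation of moments that reduces the relevant sum to an integral of a power of the logarithm, followed by the binomial theorem (the paper likewise dismisses the uniform-integrability route as less convenient). The only cosmetic difference is that you normalize first and treat $\tfrac1n\sum_{j}(\log(j/n))^k$ as a monotone Riemann sum for $\int_0^1(\log x)^k\,{\rm d}x$, whereas the paper applies the Euler--Maclaurin formula to expand $\E(\log U^{(n)})^r$ in powers of $\log n$ and lets the binomial theorem perform the cancellation.
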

\begin{proof}
To justify this it is tempting to use distributional convergence of $\log n-\log U^{(n)}$ to $\mathcal{E}_1$ in combination with a uniform integrability argument. However, we find it simpler to exploit a more analytic approach based on a direct calculation of the moments: for $r\in\N$,
\begin{multline*}
\E (\log U^{(n)})^r=n^{-1}\sum_{k=1}^n \log^r k=n^{-1}\int_1^n\log^r x\,{\rm d}x+O(n^{-1}\log^r n)\\=\sum_{i=0}^r (-1)^i i! \binom{r}{i} \log^{r-i} n+O(n^{-1}\log^r n). 
\end{multline*}
This follows from the Euler--Maclaurin summation formula. An application of the binomial theorem completes the proof.
\end{proof}

As usual, $a_n\sim b_n$ as $n\to\infty$ means that $\lim_{n\to\infty}(a_n/b_n)=1$.

\begin{lemma}\label{lem:inter}
Let $s\in\N$ and $(X_n)_{n\geq 1}$ and $(Y_n)_{n\geq 1}$ be arbitrarily dependent sequences of random variables with finite moments of order $2s$. If $\E X_n^{2s}=O(1)$ and $\lim_{n\to\infty}\E Y_n^{2s}=\infty$, then $\E (X_n-Y_n)^{2s}\sim \E Y_n^{2s}$ as $n\to\infty$.
\end{lemma}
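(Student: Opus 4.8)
The plan is to reduce the claim to the elementary inequality
$$
\bigl|\,\E(X_n-Y_n)^{2s}-\E Y_n^{2s}\,\bigr|\;\leq\;\sum_{i=1}^{2s}\binom{2s}{i}\,\E\bigl(|X_n|^i\,|Y_n|^{2s-i}\bigr),
$$
which follows by expanding $(X_n-Y_n)^{2s}=\sum_{i=0}^{2s}\binom{2s}{i}(-X_n)^i Y_n^{2s-i}$ and isolating the $i=0$ term. It then suffices to show that each summand on the right is $o\bigl(\E Y_n^{2s}\bigr)$ as $n\to\infty$, after which dividing through by $\E Y_n^{2s}$ (which tends to infinity, hence is eventually positive) yields $\E(X_n-Y_n)^{2s}/\E Y_n^{2s}\to 1$.

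First I would dispose of the cross terms using H\"older's inequality: for $1\leq i\leq 2s$,
$$
\E\bigl(|X_n|^i\,|Y_n|^{2s-i}\bigr)\;\leq\;\bigl(\E X_n^{2s}\bigr)^{i/(2s)}\,\bigl(\E Y_n^{2s}\bigr)^{(2s-i)/(2s)},
$$
where I use that $X_n$ and $Y_n$ have finite $2s$-th moments so the right-hand side is finite, and that $|X_n|^{2s}=X_n^{2s}$, $|Y_n|^{2s}=Y_n^{2s}$ are even powers. Since $\E X_n^{2s}=O(1)$, the first factor is bounded by a constant, and since $2s-i\leq 2s-1$, the second factor is $O\bigl((\E Y_n^{2s})^{(2s-1)/(2s)}\bigr)=o\bigl(\E Y_n^{2s}\bigr)$ because $\E Y_n^{2s}\to\infty$. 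Summing the finitely many terms $i=1,\dots,2s$ preserves the $o\bigl(\E Y_n^{2s}\bigr)$ bound, which is exactly what is needed.

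There is essentially no hard part here; the statement is a soft robustness lemma asserting that adding a bounded-in-$L^{2s}$ perturbation to a sequence whose $L^{2s}$ norm blows up does not affect the leading asymptotics of the $2s$-th moment. The only point requiring a modicum of care is that $X_n$ and $Y_n$ are allowed to be arbitrarily dependent, so one cannot factorize expectations of products; this is precisely why H\"older (rather than independence) is the right tool, and why the argument goes through with no assumption on the joint law. One should also note that $\E Y_n^{2s}$ being eventually positive and finite legitimizes the division and the use of $\sim$, and that working with even exponents throughout guarantees that $\E X_n^{2s}\geq 0$ and $\E Y_n^{2s}\geq 0$, so no absolute values are lost in the H\"older step.
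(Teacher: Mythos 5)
Your proposal is correct and follows essentially the same route as the paper: expand $(X_n-Y_n)^{2s}$ by the binomial theorem, isolate the $Y_n^{2s}$ term, and control every remaining cross term by H\"older's inequality together with $\E X_n^{2s}=O(1)$ and $\E Y_n^{2s}\to\infty$. The argument is complete as stated.
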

\begin{proof}
We start with a representation $$\E (X_n-Y_n)^{2s}=\sum_{k=0}^{2s}(-1)^k \binom{2s}{k}\E X_n^{2s-k}Y_n^k.$$ For $k=1,\ldots, 2s-1$, an application of H\"{o}lder's inequality yields $$|\E X_n^{2s-k}Y_n^k|\leq (\E X_n^{2s})^{1-k/(2s)} (\E Y_n^{2s})^{k/(2s)}=o(\E Y_n^{2s})$$ because $k<2s$. Thus, we have proved that  $\E (X_n-Y_n)^{2s}=\E Y_n^{2s}+o(\E Y_n^{2s})$.
\end{proof}

\begin{lemma}\label{lem:u_to_tilde_u}
The following asymptotic relations hold:
\begin{equation}\label{eq:k_est1}
K_n:=\E \left(\log U^{(n)}-\log \widetilde{U}^{(n)}\right)^4=O(1),\quad n\to\infty,
\end{equation}
where $\widetilde{U}^{(n)}=\prod_{p\leq n} p^{\1_{\{\lambda_p(U^{(n)})\geq 1\}}}$ for $n\in\mn$,
and
\begin{equation}\label{eq:k_est}
\E \left(\log U^{(n)}-\log \widetilde{U}^{(n)}-\E \log U^{(n)}+\E \log \widetilde{U}^{(n)}\right)^4=O(1),\quad n\to\infty.
\end{equation}
\end{lemma}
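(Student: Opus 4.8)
The plan is to prove \eqref{eq:k_est1} first and then deduce \eqref{eq:k_est} from it by an elementary centering argument. The key observation is that
$$
\log U^{(n)}-\log\widetilde U^{(n)}=\sum_{p\leq n}\log p\,(\lambda_p(U^{(n)})-\1_{\{\lambda_p(U^{(n)})\geq 1\}})=\sum_{p\leq n}\log p\,(\lambda_p(U^{(n)})-1)^+ ,
$$
that is, only primes $p$ with $\lambda_p(U^{(n)})\geq 2$, equivalently $p^2\mid U^{(n)}$, contribute. So one should think of the random variable $D^{(n)}:=\log U^{(n)}-\log\widetilde U^{(n)}$ as a sum of the form $\sum_{p^2\leq n}\log p\,(\lambda_p(U^{(n)})-1)^+$, a ``sparse'' sum because $\P\{p^2\mid U^{(n)}\}=n^{-1}\lfloor n/p^2\rfloor\leq p^{-2}$. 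First I would write $D^{(n)}=\sum_{p}\xi_p^{(n)}$ with $\xi_p^{(n)}:=\log p\,(\lambda_p(U^{(n)})-1)^+$, expand the fourth moment as
$$
\E (D^{(n)})^4=\sum_{p,q,r,s}\E\big(\xi_p^{(n)}\xi_q^{(n)}\xi_r^{(n)}\xi_s^{(n)}\big),
$$
and bound each term. For a single prime, $\E(\xi_p^{(n)})^k\leq \log^k p\sum_{j\geq 2}(j-1)^k\P\{\lambda_p(U^{(n)})=j\}\leq \log^k p\sum_{j\geq 2}(j-1)^k p^{-j}= \text{const}\cdot p^{-2}\log^k p$ for each fixed $k$, uniformly in $n$ and $p$; I would get this from the crude bound $\P\{\lambda_p(U^{(n)})\geq j\}=n^{-1}\lfloor n/p^j\rfloor\leq p^{-j}$ and summing the resulting geometric-type series.

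The second step is to organize the multi-index sum by the partition of $\{p,q,r,s\}$ into equal blocks. Using the independence statement of Lemma \ref{lem:conv_to_geom}\ref{itt:3} is not quite enough here because the $O(n^{-1})$ error is not summable over all pairs of primes; instead I would use a genuine bound valid for all $n$: for distinct primes $p_1,\dots,p_\ell$, the events $\{\lambda_{p_i}(U^{(n)})\geq j_i\}$ force $p_1^{j_1}\cdots p_\ell^{j_\ell}\mid U^{(n)}$, so the joint probability is $n^{-1}\lfloor n/(p_1^{j_1}\cdots p_\ell^{j_\ell})\rfloor\leq \prod_i p_i^{-j_i}$, which \emph{factorizes as an upper bound}. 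Hence $\E\prod_i|\xi_{p_i}^{(n)}|^{k_i}\leq \prod_i \text{const}\cdot p_i^{-2}\log^{k_i}p_i$ whenever the $p_i$ are distinct. Summing over each block: the diagonal term $\sum_p \E(\xi_p^{(n)})^4\leq \text{const}\sum_p p^{-2}\log^4 p<\infty$; the $2+2$ and $2+1+1$ and $1+1+1+1$ type terms are bounded by products of convergent series $\big(\sum_p p^{-2}\log^{k}p\big)$. Every one of these series converges (comparison with $\sum_m m^{-2}\log^k m$), so the whole expansion is $O(1)$, proving \eqref{eq:k_est1}.

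For \eqref{eq:k_est}, note that the expression there is exactly $D^{(n)}-\E D^{(n)}$ raised to the fourth power, and $\E D^{(n)}=\sum_p\E\xi_p^{(n)}\leq\text{const}\sum_p p^{-2}\log p<\infty$ is bounded uniformly in $n$ by the same estimate. Therefore $\E(D^{(n)}-\E D^{(n)})^4\leq 8\,\E(D^{(n)})^4+8(\E D^{(n)})^4=O(1)$ by convexity of $x\mapsto x^4$ (or the $c_r$-inequality), and \eqref{eq:k_est} follows. The main obstacle is purely bookkeeping: making sure that the factorized upper bound $\prod_i p_i^{-2}\log^{k_i}p_i$ genuinely applies across \emph{all} index coincidence patterns and that no pattern produces a divergent series; the crucial point making this work is that the ``small'' factor is $p^{-2}$ (coming from $\lambda_p\geq 2$) rather than $p^{-1}$, so even the single-prime series $\sum_p p^{-2}\log^4 p$ converges and there is no delicate cancellation to track.
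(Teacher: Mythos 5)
Your proposal is correct and follows essentially the same route as the paper: the same decomposition $\log U^{(n)}-\log\widetilde U^{(n)}=\sum_{p\le n}\log p\,(\lambda_p(U^{(n)})-1)_+$, the same multinomial expansion organized by coincidence patterns of the four prime indices, the same factorized bound $\P\{\lambda_{p_1}(U^{(n)})\ge j_1,\ldots\}\le\prod_i p_i^{-j_i}$ leading to convergent series $\sum_p p^{-2}\log^k p$, and the same elementary inequality to pass to the centered fourth moment. No gaps.
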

\begin{proof}
We write with the help of the binomial theorem
\begin{align*}
&\hspace{-0.1cm}K_n= \E \Big(\sum_{p\leq n}\log p\cdot (\lambda_p(U^{(n)})-\1_{\{\lambda_p(U^{(n)})\geq 1\}})\Big)^4\\
&=\sum_{p\leq n}\log^4 p\times\E (\lambda_p(U^{(n)})-1)^4_+\\
&+4\sum_{p\neq q \leq n}\log^3 p\log q\times \E (\lambda_p(U^{(n)})-1)^3_+(\lambda_q(U^{(n)})-1)_+\\
&+ 6\sum_{p<q\leq n}\log^2 p\log^2 q\times \E (\lambda_p(U^{(n)})-1)^2_+(\lambda_q(U^{(n)})-1)^2_+\\
&+12\sum_{p\neq q\neq r \leq n}\log^2 p\log q\log r\times\E (\lambda_p(U^{(n)})-1)^2_+(\lambda_q(U^{(n)})-1)_+(\lambda_r(U^{(n)})-1)_+\\ &+24\sum_{p<q<r<s\leq n} \log p\log q\log r\log s\times\\
&\hspace{0.5cm} \E (\lambda_p(U^{(n)})-1)_+(\lambda_q(U^{(n)})-1)_+(\lambda_r(U^{(n)})-1)_+ (\lambda_s(U^{(n)})-1)_+\\
&=\sum_{i=1}^5 K_i(n).
\end{align*}
For any $k\in\N$, any positive integers $v_1,\ldots, v_k$ and any distinct prime numbers $p_1,\ldots, p_k$, we have
\begin{align*}
&\hspace{-0.3cm}\E \left(\prod_{i=1}^k  (\lambda_{p_i}(U^{(n)})-1)^{v_i}_+\right)\\
&=\sum_{j_1,\ldots, j_k\geq 2}(j_1-1)^{v_1}\cdots (j_k-1)^{v_k}\P\{\lambda_{p_1}(U^{(n)})=j_1,\ldots, \lambda_{p_k}(U^{(n)})=j_k\}\\
&\leq \sum_{j_1,\ldots, j_k\geq 2}(j_1-1)^{v_1}\cdots(j_k-1)^{v_k}\P\{\lambda_{p_1}(U^{(n)})\geq j_1,\ldots, \lambda_{p_k}(U^{(n)})\geq j_k\}\\
&=\sum_{j_1,\ldots, j_k\geq 2}(j_1-1)^{v_1}\cdots (j_k-1)^{v_k}\lfloor n/(p_1^{j_1}\cdots p_k^{j_k})\rfloor n^{-1}\\
&\leq \sum_{j_1,\ldots, j_k\geq 2}j_1^{v_1}p_1^{-j_1}\cdots j_k^{v_k}p_k^{-j_k}\leq {\rm const}\cdot\prod_{i=1}^k p_i^{-2}.
\end{align*}
Now, for $i=1,\ldots,5$, the asymptotic estimate $K_i(n)=O(1)$ follows from the preceding bound, $\sum_p p^{-2}\log^4 p<\infty$ and the following inequalities:
\begin{align*}
K_2(n)&\leq {\rm const}\cdot\sum_{p\neq q \leq n}(p^{-2}\log^3 p)(q^{-2}\log q)\\
&\hspace{5cm}\leq{\rm const}\cdot\left(\sum_{p}p^{-2}\log^3 p\right)\left(\sum_{p}p^{-2}\log p\right);\\
K_3(n)&\leq {\rm const}\cdot\sum_{p<q \leq n}(p^{-2}\log^2 p)(q^{-2}\log^2 q) \leq {\rm const}\cdot\Big(\sum_{p}p^{-2}\log^2 p\Big)^2;
\end{align*}
\begin{align*}
K_4(n)&\leq {\rm const}\cdot\sum_{p\neq q\neq r \leq n}(p^{-2}\log^2 p)(q^{-2}\log q)(r^{-2}\log r)\\
&\hspace{5cm} \leq {\rm const}\cdot\Big(\sum_{p}p^{-2}\log^2 p\Big) \Big(\sum_{p}p^{-2}\log p\Big)^2;\\
K_5(n)&\leq {\rm const}\cdot\sum_{p<q<r<s \leq n} (p^{-2}\log p)(q^{-2}\log q)(r^{-2}\log r)(s^{-2}\log s)\\
&\hspace{5cm} \leq {\rm const}\cdot \Big(\sum_{p}p^{-2}\log p\Big)^4.
\end{align*}
Thus, \eqref{eq:k_est1} holds. Inequality \eqref{eq:k_est} follows immediately, because for any random variable $X$ with $\E X^4<\infty$ we have $\E (X-\E X)^4\leq 8\E X^4$.
\end{proof}

\subsection{Asymptotics of the variance}

The function $\pi$ defined by $\pi(x):=\sum_{p\leq x} 1$ for $x\geq 0$ is called a prime counting function. Recall that the prime number theorem (see, for instance, Theorem 6.2.1 in \cite{BGT}) states that
\begin{equation}\label{pnt}
\pi(x)~\sim x/\log x,~\quad x\to\infty.
\end{equation}
For later multiple use, we note that \eqref{pnt} in combination with integration by parts entails
\begin{equation}\label{pnt_integral}
\int_{I_x}f(y){\rm d}\pi(y)~\sim~\int_{I_x}(f(y)/\log y){\rm d}y,\quad x\to\infty 
\end{equation}
where $f(x)=x^{-a}\log^b x$ and either (a) $I_x=[2,\,x]$, $a\leq 1$, $b>0$ or (b) $I_x=(x,\infty)$, $a>1$, $b\geq 0$.

While investigating the asymptotics of $\Var (\log \widetilde{U}^{(n,m_n)})$ we treat the two cases (A) $m_n\leq n^{1/2}$ for large $n$ and $m_n\to\infty$ and (B) $m_n> n^{1/2}$ for large $n$ and $m_n=o(n)$ separately in Theorems \ref{lem:variance} and \ref{lem:variance2}. Here is a brief explanation of such a case distinction. In case (B), the variance of $\log \widetilde{U}^{(n,m_n)}$ is given by the sum of two terms which can exhibit different first order asymptotics (formula \eqref{eq:22'}). This necessitates us to provide several terms expansions for each. In the case considered the $m$'s are reasonably large and kill the influence of nonprincipal terms of these expansions (in particular, it turns out that two terms expansions suffice). In case (A), the aforementioned reasoning does not help. We state, without going into details, that
\begin{equation}\label{aux1001}
\Var (\log \widetilde{U}^{(n,m_n)})~=~2^{-1}\log^2 m_n+O(\log n \log^{-2}m_n),
\end{equation}
where, to the best of our knowledge, the big-oh term cannot be improved. Hence, formula \eqref{aux1001} as it stands only provides the correct asymptotics $\Var (\log \widetilde{U}^{(n,m_n)})\sim 2^{-1}\log^2 m_n$ for $m_n$ satisfying $m_n e^{-\log^{1/4}n}\to\infty$. For smaller $m$ formula \eqref{aux1001} is useless for us. In view of these issues we offer an alternative approach for the case (A). We show that $\Var (\log \widetilde{U}^{(n,m_n)})\sim \Var \Big(\sum_{p\leq m_n}\log p\cdot \1_{\{\lambda_p(U^{(n)})\geq 1\}}\Big)$. The variance on the right-hand side which is given by the sum of three terms is much easier to deal with, for the first order asymptotics of the terms is enough. On the other hand, this simple reasoning is not applicable in case (B) because the terms of the sum representing the variance may exhibit different first order behavior. For instance, \eqref{aux1003} still holds true in case (B),
whereas $$2n^{-1}\sum_{p<q\leq m_n}\log p\log q\, \lfloor n/(pq)\rfloor~\sim~ 2^{-1}\log^2 n-\log^2 (n/m_n).$$

\begin{theorem}\label{lem:variance}
Assume that $m_n\leq n^{1/2}$ for all $n$ large enough and $m_n\to\infty$ as $n\to\infty$. Then
$$
\Var \left(\log \widetilde{U}^{(n,m_n)}\right)~\sim~2^{-1} \log^2 m_n,\quad n\to\infty.
$$
\end{theorem}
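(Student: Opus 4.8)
The plan is to follow the strategy announced in the text just before the statement: replace the full sum over primes $p\le n$ by the much shorter sum over $p\le m_n$, and then compute the variance of the latter directly using only first‑order asymptotics of its three constituent terms. Concretely, write $V^{(n,m_n)} := \sum_{p\le m_n}\log p\cdot\1_{\{\lambda_p(U^{(n)})\ge 1\}}$, so that $\log\widetilde U^{(n,m_n)} = V^{(n,m_n)} + R_n$ where $R_n := \sum_{m_n<p\le n}\log p\cdot\1_{\{\lambda_p(U^{(n)})\ge 1\}}$ is the contribution of the ``large'' primes. The two claims to establish are: (1) $\Var(R_n) = o(\log^2 m_n)$, and hence, by Cauchy--Schwarz applied to $\Cov(V^{(n,m_n)},R_n)$, it suffices to compute $\Var(V^{(n,m_n)})$; and (2) $\Var(V^{(n,m_n)}) \sim 2^{-1}\log^2 m_n$.

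For step (1), note that $\P\{\lambda_p(U^{(n)})\ge 1\} = n^{-1}\lfloor n/p\rfloor \le 1/p$, so $\E R_n \le \sum_{m_n<p\le n} p^{-1}\log p$, and by \eqref{pnt_integral} with $f(y) = y^{-1}\log y$ on $I = (m_n,n]$ we get $\E R_n = O(\log^2 n - \log^2 m_n)$; more importantly $\Var(R_n) \le \E R_n^2$, and since $R_n$ is a sum of independent indicators (the $\1_{\{\lambda_p(U^{(n)})\ge 1\}}$ are \emph{not} independent over $p$, but one may bound $\Var(R_n) \le \sum_{m_n<p\le n}\log^2 p\cdot\P\{\lambda_p\ge 1\} + 2\sum_{m_n<p<q\le n}\log p\log q\,|\Cov(\cdot,\cdot)|$, and by Lemma \ref{lem:conv_to_geom}\ref{itt:3} the covariances are $O(n^{-1})$, contributing $O(n^{-1}(\sum_{p\le n}\log p)^2) = O(n)$, absorbed since we will see the main term is $\sum_{m_n<p\le n} p^{-1}\log^2 p \sim 3^{-1}(\log^3 n - \log^3 m_n)$ by \eqref{pnt_integral}). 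Under (A) we have $m_n\le n^{1/2}$, so $\log^3 n \le 8\log^3 m_n$ while $\log^2 m_n\to\infty$ — wait, this gives $\Var(R_n) = O(\log^3 n)$ which is \emph{not} $o(\log^2 m_n)$ in general. This is the main obstacle, and it is exactly why the authors flagged in the remark preceding the theorem that the naive large‑prime bound fails here. The correct route is instead to bound $\Var(R_n)$ by comparison with $\Var(\log U^{(n)} - \log\widetilde U^{(n)}) = O(1)$ from Lemma \ref{lem:u_to_tilde_u} together with $\Var(\log U^{(n)})\sim 1$ (Lemma \ref{lem:log_uniform_moments} with $s=1$) and $\Var(\log\widetilde U^{(n,m_n)})$ itself: one writes $R_n = \log\widetilde U^{(n)} - \log\widetilde U^{(n,m_n)}$ is not quite it either. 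The cleanest argument: it is enough to show $\Var(\log\widetilde U^{(n,m_n)}) \le \Var(V^{(n,m_n)}) + o(\log^2 m_n)$ and the reverse, which reduces to showing $\E R_n^2$ and the cross term are controlled — and here one exploits that $R_n \le \log U^{(n)} - (\text{small‑prime part of }\log U^{(n)})$, ultimately bounding everything by $\Var$ of a sum over $p\le m_n$ of the exact multiplicities plus an $O(1)$ remainder.

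For step (2), since the indicators $\1_{\{\lambda_p(U^{(n)})\ge 1\}}$ for distinct primes $p,q\le m_n$ have $\P\{\lambda_p\ge 1,\lambda_q\ge 1\} = n^{-1}\lfloor n/(pq)\rfloor = (pq)^{-1} + O(n^{-1})$ whereas $\P\{\lambda_p\ge 1\}\P\{\lambda_q\ge 1\} = (p^{-1} + O(n^{-1}))(q^{-1}+O(n^{-1}))$, the covariance $\Cov(\1_{\{\lambda_p\ge 1\}},\1_{\{\lambda_q\ge 1\}}) = O(n^{-1})$ uniformly. Hence
\begin{align*}
\Var(V^{(n,m_n)}) &= \sum_{p\le m_n}\log^2 p\cdot\frac{\lfloor n/p\rfloor}{n}\Bigl(1-\frac{\lfloor n/p\rfloor}{n}\Bigr) + \sum_{p\ne q\le m_n}\log p\log q\cdot O(n^{-1})\\
&= \sum_{p\le m_n}p^{-1}\log^2 p + O\Bigl(\sum_{p\le m_n}\log^2 p\Bigr) + O\Bigl(n^{-1}\bigl(\sum_{p\le m_n}\log p\bigr)^2\Bigr).
\end{align*}
By \eqref{pnt_integral} (case (a), $a=1$, $b=2$) the leading term is $\sum_{p\le m_n}p^{-1}\log^2 p \sim \int_2^{m_n}y^{-1}\log y\,{\rm d}y = 2^{-1}\log^2 m_n$; by the prime number theorem $\sum_{p\le m_n}\log p \sim m_n$ (Chebyshev/PNT), so the last error is $O(m_n^2/n) = O(1)$ under (A), and $\sum_{p\le m_n}\log^2 p = O(m_n\log m_n) = o(\log^2 m_n)$ — no, that is false; $m_n\log m_n$ dwarfs $\log^2 m_n$. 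Here lies the second subtlety: the diagonal term $\sum_{p\le m_n}\log^2 p\cdot(\lfloor n/p\rfloor/n)^2$ must be kept, not discarded — it equals $\sum_{p\le m_n}p^{-2}\log^2 p\lfloor n/p\rfloor \cdot (\text{stuff})$, in fact $\sum_{p\le m_n}\log^2 p\cdot(\lfloor n/p\rfloor/n)^2 \le \sum_p p^{-2}\log^2 p < \infty$, so the ``$O(\sum\log^2 p)$'' bound above is wrong and the correct diagonal contribution is $\sum_{p\le m_n}p^{-1}\log^2 p - \sum_{p\le m_n}\log^2 p\cdot(\lfloor n/p\rfloor/n)^2 + o(1) = 2^{-1}\log^2 m_n + O(1)$. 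Thus $\Var(V^{(n,m_n)}) = 2^{-1}\log^2 m_n + O(1) \sim 2^{-1}\log^2 m_n$, and combined with step (1) this gives the theorem. I expect step (1) — rigorously showing the large‑prime remainder does not disturb the $2^{-1}\log^2 m_n$ asymptotics, despite its naive variance bound being of larger order $\log^3 n$ — to be the real content, forcing one to pass through the $O(1)$ estimates of Lemma \ref{lem:u_to_tilde_u} rather than a crude term‑by‑term bound.
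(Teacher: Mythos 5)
Your step (2) is essentially sound and, after your in-line self-correction, reproduces the paper's computation in a slightly different grouping: you keep the diagonal terms $\sum_{p\le m_n}\log^2 p\cdot\theta_p(1-\theta_p)$ with $\theta_p=n^{-1}\lfloor n/p\rfloor$ and bound the pairwise covariances by $O(n^{-1})$, using $m_n\le n^{1/2}$ to make the off-diagonal contribution $O(m_n^2/n)=O(1)$; the paper instead expands the second moment and exhibits a cancellation of two $\log^2 m_n$ terms, but the two computations are equivalent and both give $\Var(V^{(n,m_n)})=2^{-1}\log^2 m_n+O(1)$. (Incidentally, your aside $\sum_{m_n<p\le n}p^{-1}\log^2 p\sim 3^{-1}(\log^3 n-\log^3 m_n)$ misapplies \eqref{pnt_integral}; the correct asymptotics is $2^{-1}(\log^2 n-\log^2 m_n)$, though this occurs in a branch you discard.)

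The genuine gap is in step (1), and it stems from a misreading of the definition: $\widetilde U^{(n,m)}=\prod_{m<p\le n}p^{\1_{\{\lambda_p(U^{(n)})\ge 1\}}}$, so $\log\widetilde U^{(n,m_n)}$ \emph{is} your large-prime sum $R_n$; the decomposition $\log\widetilde U^{(n,m_n)}=V^{(n,m_n)}+R_n$ is vacuous, and the correct identity is $\log\widetilde U^{(n)}=V^{(n,m_n)}+\log\widetilde U^{(n,m_n)}$. Consequently the goal you set yourself, $\Var(R_n)=o(\log^2 m_n)$, is false --- it would contradict the theorem, since $\Var(R_n)=\Var(\log\widetilde U^{(n,m_n)})\sim 2^{-1}\log^2 m_n$. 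You correctly sense that the crude term-by-term bound fails and you name the right ingredients (Lemma \ref{lem:log_uniform_moments} with $s=1$ and \eqref{eq:k_est} from Lemma \ref{lem:u_to_tilde_u}, which together give $\Var(\log\widetilde U^{(n)})=O(1)$), but you never assemble them. The missing assembly is short: writing $\log\widetilde U^{(n,m_n)}=\log\widetilde U^{(n)}-V^{(n,m_n)}$, the bound $\Var(\log\widetilde U^{(n)})=O(1)$ together with $\Var(V^{(n,m_n)})\to\infty$ and Cauchy--Schwarz on the cross-covariance (this is exactly Lemma \ref{lem:inter} with $s=1$) yields
\begin{equation*}
\Var\bigl(\log\widetilde U^{(n,m_n)}\bigr)=\Var\bigl(\log\widetilde U^{(n)}-V^{(n,m_n)}\bigr)\sim\Var\bigl(V^{(n,m_n)}\bigr),
\end{equation*}
which combined with your step (2) finishes the proof. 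As written, your argument identifies this reduction as ``the real content'' and then leaves it as a gesture rather than a proof, so the proposal is incomplete.
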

\begin{proof}
From Lemma \ref{lem:log_uniform_moments} with $s=1$ we know that
\begin{equation}\label{eq:variance_log_uniform}
\Var \Big(\sum_{p\leq n}\log p\cdot  \lambda_p(U^{(n)})\Big)=\Var (\log U^{(n)})~\to~ 1. 
\end{equation}
Further, relation \eqref{eq:k_est} ensures that
\begin{equation}
\Var (\log U^{(n)}-\log \widetilde{U}^{(n)})=O(1) 
\end{equation}
because $\Var X\leq \sqrt{\E (X-\E X)^4}$ for any random variable $X$ with $\me X^4<\infty$. These asymptotic relations in combination with the inequality $(x+y)^2\leq 2(x^2+y^2)$, $x,y\in\mr$ entail
$$
\Var \Big(\sum_{p\leq n}\log p\cdot \1_{\{\lambda_p(U^{(n)})\geq 1\}} \Big)=\Var \left(\log \widetilde{U}^{(n)} \right)=O(1). 
$$
Further, by Lemma \ref{lem:inter} applied with $s=1$,
\begin{align*}
X_n&:=\log \widetilde{U}^{(n)}-\E \log \widetilde{U}^{(n)}\\
&=\sum_{p\leq n}\log p\cdot \1_{\{\lambda_p(U^{(n)})\geq 1\}}-\E \Big(\sum_{p\leq n}\log p\cdot \1_{\{\lambda_p(U^{(n)})\geq 1\}}\Big),
\end{align*}
and
$$
Y_n:=\sum_{p\leq m_n}\log p\cdot \1_{\{\lambda_p(U^{(n)})\geq 1\}}-\E \Big(\sum_{p\leq m_n}\log p\cdot \1_{\{\lambda_p(U^{(n)})\geq 1\}}\Big)
$$
we have
\begin{multline*}
\Var (\log \widetilde{U}^{(n,m_n)})=\Var \Big(\log \widetilde{U}^{(n)}-\sum_{p\leq m_n}\log p\cdot \1_{\{\lambda_p(U^{(n)})\geq 1\}}\Big)\\
\sim~\Var \Big(\sum_{p\leq m_n}\log p\cdot \1_{\{\lambda_p(U^{(n)})\geq 1\}} \Big)
\end{multline*}
provided the right-hand side diverges to infinity. Thus, it is enough to prove that
\begin{equation}\label{eq:variance_sum_to_m}
\Var \Big(\sum_{p\leq m_n}\log p\cdot \1_{\{\lambda_p(U^{(n)})\geq 1\}}\Big)~\sim~2^{-1} \log^2 m_n. 
\end{equation}

As a preparation, write
\begin{align}\label{aux1006}
&\hspace{-0.2cm}\Var \Big(\sum_{p\leq m_n}\log p\cdot \1_{\{\lambda_p(U^{(n)})\geq 1\}}\Big)\\
&=\E\Big(\sum_{p\leq m_n}\log p\cdot \1_{\{\lambda_p(U^{(n)})\geq 1\}}\Big)^2-\Big(\E\Big(\sum_{p\leq m_n}\log p\cdot \1_{\{\lambda_p(U^{(n)})\geq 1\}}\Big)\Big)^2\notag\\
&=n^{-1}\sum_{p\leq m_n}\log^2 p \lfloor n/p\rfloor+2n^{-1}\sum_{p<q\leq m_n}\log p\log q \lfloor n/(pq)\rfloor.\notag\\
&-\Big(n^{-1}\sum_{p\leq m_n}\log p \lfloor n/p\rfloor\Big)^2\notag
\end{align}
Here, the equalities $\mmp\{\lambda_p(U^{(n)})\geq 1\}=n^{-1}\lfloor n/p\rfloor$ and, for $p\neq q$, $$\mmp\{\lambda_p(U^{(n)})\geq 1, \lambda_q(U^{(n)})\geq 1\}=n^{-1}\lfloor n/(pq)\rfloor$$ have to be recalled.

We start by analyzing the first term
$$\sum_{p\leq m_n} p^{-1} \log^2 p-n^{-1}\sum_{p\leq m_n} \log^2 p \leq n^{-1}\sum_{p\leq m_n} \log^2 p\, \lfloor n/p \rfloor \leq
\sum_{p\leq m_n} p^{-1} \log^2 p,$$ so that
\begin{equation}\label{aux1003}
n^{-1}\sum_{p\leq m_n}\log^2 p \lfloor n/p\rfloor~\sim~ 2^{-1}\log^2 m_n 
\end{equation}
is a consequence of
\begin{multline*}
\sum_{p\leq m_n} p^{-1}\log^2 p ~\sim~ 2^{-1} \log^2 m_n\quad \text{and}\\
n^{-1}\sum_{p\leq m_n}\log^2 p~\sim~ n^{-1} m_n\log m_n=o(\log m_n). 
\end{multline*}
The latter limit relations are justified by \eqref{pnt_integral} with $f(x)=x^{-1}\log^2 x$ and $f(x)=\log^2 x$, respectively. Similarly,
\begin{equation}\label{aux1004}
\Big(n^{-1}\sum_{p\leq m_n}\log p \lfloor n/p\rfloor\Big)^2~\sim~ \log^2 m_n 
\end{equation}
follows from $$\sum_{p\leq m_n} p^{-1}\log p- n^{-1}\sum_{p\leq m_n}\log p~\leq~ n^{-1}\sum_{p\leq m_n} \log p\, \lfloor n/p \rfloor\leq \sum_{p\leq m_n}p^{-1}\log p$$ and
$$\sum_{p\leq m_n} p^{-1}\log p ~\sim~ \log m_n,\quad\quad n^{-1}\sum_{p\leq m_n}\log p ~\sim~ n^{-1} m_n=o(1).$$ Finally, we use
\begin{multline*}
\sum_{p<q\leq m_n}(p^{-1}\log p)(q^{-1}\log q)- n^{-1}\sum_{p<q\leq m_n}\log p\log q \\
\leq n^{-1}\sum_{p<q\leq m_n}\log p\log q\, \lfloor n/(pq)\rfloor \leq \sum_{p<q\leq m_n}(p^{-1}\log p)(q^{-1}\log q)
\end{multline*}
together with $$2\sum_{p<q\leq m_n}(p^{-1}\log p)(q^{-1}\log q)=\Big(\sum_{p\leq m_n}p^{-1}\log p\Big)^2-\sum_{p\leq m_n}p^{-2}\log^2 p~\sim~\log^2 m_n$$ and $$2n^{-1}\sum_{p<q\leq m_n}\log p\log q\leq n^{-1}\Big(\sum_{p\leq m_n}\log p\Big)^2~\sim~n^{-1} m_n^2=O(1)$$ (recall that $m_n\leq n^{1/2}$ by assumption)
to obtain
\begin{equation}\label{aux1005}
2n^{-1}\sum_{p<q\leq m_n}\log p\log q\, \lfloor n/(pq)\rfloor~\sim~ \log^2 m_n.
\end{equation}
A combination of \eqref{aux1003}, \eqref{aux1004} and \eqref{aux1005} proves \eqref{eq:variance_sum_to_m}.
\end{proof}

\begin{theorem}\label{lem:variance2}
Assume that $m_n>n^{1/2}$ for all $n$ large enough and $m_n=o(n)$ as $n\to\infty$. Then \begin{equation}\label{aux1009}
\Var (\log \widetilde{U}^{(n,m_n)})~\sim~2^{-1}(\log n-\log m_n)(3\log m_n-\log n),\quad n\to\infty.
\end{equation}
\end{theorem}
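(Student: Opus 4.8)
The plan is to compute $\Var(\log \widetilde{U}^{(n,m_n)})$ directly. The point is that in case (B) the restriction to primes exceeding $n^{1/2}$ annihilates \emph{all} of the off-diagonal covariance terms, so that, although one needs sharper input than in Theorem~\ref{lem:variance}, the computation is structurally simpler. Writing $\xi_p:=\1_{\{\lambda_p(U^{(n)})\geq 1\}}$ and recalling $\P\{\xi_p=1\}=n^{-1}\lfloor n/p\rfloor$ and $\P\{\xi_p=1,\xi_q=1\}=n^{-1}\lfloor n/(pq)\rfloor$ for $p\neq q$, I would expand the variance by the same bilinear computation as in \eqref{aux1006} to obtain
\begin{multline*}
\Var\bigl(\log \widetilde{U}^{(n,m_n)}\bigr)=n^{-1}\sum_{m_n<p\leq n}\log^2 p\,\lfloor n/p\rfloor+2n^{-1}\sum_{m_n<p<q\leq n}\log p\log q\,\lfloor n/(pq)\rfloor\\-\Bigl(n^{-1}\sum_{m_n<p\leq n}\log p\,\lfloor n/p\rfloor\Bigr)^{2}.
\end{multline*}
The crucial observation is that the double sum vanishes identically for all large $n$: if $m_n<p<q\leq n$ then $p,q>n^{1/2}$, hence $pq>n$ and $\lfloor n/(pq)\rfloor=0$. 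It then remains to find two-term asymptotic expansions of $T_1:=n^{-1}\sum_{m_n<p\leq n}\log^2 p\,\lfloor n/p\rfloor$ and of $T_3:=\bigl(n^{-1}\sum_{m_n<p\leq n}\log p\,\lfloor n/p\rfloor\bigr)^{2}$ precise enough to evaluate their difference.

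I would first replace $\lfloor n/p\rfloor$ by $n/p$ inside the sums; the incurred errors are nonnegative and, since $\sum_{p\leq n}\log p=O(n)$ (Chebyshev's bound, or the prime number theorem \eqref{pnt}), they are bounded by $n^{-1}\sum_{p\leq n}\log^2 p\leq n^{-1}\log n\sum_{p\leq n}\log p=O(\log n)$ and by $n^{-1}\sum_{p\leq n}\log p=O(1)$, respectively. Thus $T_1=\sum_{m_n<p\leq n}p^{-1}\log^2 p+O(\log n)$ and $n^{-1}\sum_{m_n<p\leq n}\log p\,\lfloor n/p\rfloor=\sum_{m_n<p\leq n}p^{-1}\log p+O(1)$. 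Next I would invoke Mertens' estimate $\sum_{p\leq x}p^{-1}\log p=\log x+O(1)$, together with summation by parts, to deduce $\sum_{p\leq x}p^{-1}\log^2 p=2^{-1}\log^2 x+O(\log x)$; subtracting the values at $x=n$ and $x=m_n$ then gives $T_1=2^{-1}(\log^2 n-\log^2 m_n)+O(\log n)$, while $n^{-1}\sum_{m_n<p\leq n}\log p\,\lfloor n/p\rfloor=\log n-\log m_n+O(1)$ and hence $T_3=(\log n-\log m_n)^2+O(\log n)$.

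Combining these with the vanishing of the cross term and using the polynomial identity
\begin{equation*}
2^{-1}(\log^2 n-\log^2 m_n)-(\log n-\log m_n)^2=2^{-1}(\log n-\log m_n)(3\log m_n-\log n),
\end{equation*}
I would arrive at $\Var(\log \widetilde{U}^{(n,m_n)})=2^{-1}(\log n-\log m_n)(3\log m_n-\log n)+O(\log n)$. At this stage the two standing assumptions of case (B) enter decisively: $m_n=o(n)$ forces $\log n-\log m_n\to\infty$, and $m_n>n^{1/2}$ forces $3\log m_n-\log n>2^{-1}\log n$, so the main term is bounded below by a constant multiple of $\log n\,(\log n-\log m_n)$, which grows strictly faster than $\log n$. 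Consequently $O(\log n)=o\bigl((\log n-\log m_n)(3\log m_n-\log n)\bigr)$ and \eqref{aux1009} follows.

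The one genuinely delicate step is the error bookkeeping: one must carry the relevant prime sums to within an explicit additive $O(\log x)$ error rather than merely to within $o(\log^2 x)$. Indeed, when $m_n/n\to 0$ but $\log m_n/\log n\to 1$ (for instance $m_n=n/\log^2 n$) the main term is only of order $\log\log n\cdot\log n=o(\log^2 n)$, so an $o(\log^2 n)$ remainder would be useless. This is precisely why one must use a Mertens-type estimate with an explicit $O(1)$ (equivalently $o(\log x)$) remainder instead of the bare asymptotic equivalence \eqref{pnt_integral} that sufficed for Theorem~\ref{lem:variance}. No further obstacle is anticipated; the remaining steps are the routine summation-by-parts computations indicated above.
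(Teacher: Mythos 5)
Your proposal is correct and follows essentially the same route as the paper: the same variance expansion, the same observation that $\lfloor n/(pq)\rfloor=0$ kills the off-diagonal terms when $p,q>m_n>n^{1/2}$, the same two-term expansions of the remaining sums, and the same algebraic identity at the end. The only difference is that you track the errors via Mertens-type estimates with explicit $O(\log n)$ remainders, whereas the paper works with $o(\log^2 n-\log^2 m_n)$ remainders and absorbs them using $(\log n+\log m_n)/(3\log m_n-\log n)\leq 4$; your sharper bookkeeping is, if anything, the more careful way to justify the step the paper attributes to \eqref{pnt_integral} on the interval $(m_n,n]$.
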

\begin{proof}
Note that $$\log \widetilde{U}^{(n,m_n)}-\me \log \widetilde{U}^{(n,m_n)}=\sum_{m_n< p\leq n}\log p(\1_{\{\lambda_p(U^{(n)})\geq 1\}}-\mmp\{\lambda_p(U^{(n)})\geq 1\})$$ whence
\begin{equation}\label{eq:22'}
\begin{split}
{\rm Var}(\log \widetilde{U}^{(n,m_n)})=\; &
n^{-1}\sum_{m_n<p\leq n}\log^2 p\, \lfloor n/p\rfloor-\Big(n^{-1}\sum_{m_n<p\leq n}\log p\, \lfloor n/p\rfloor\Big)^2\\&+2n^{-1}\sum_{m_n<p<q\leq n}\log p\log q\, \lfloor n/(pq)\rfloor\\=\; &
n^{-1}\sum_{m_n<p\leq n}\log^2 p\, \lfloor n/p\rfloor-\Big(n^{-1}\sum_{m_n<p\leq n}\log p\, \lfloor n/p\rfloor\Big)^2
\end{split}
\end{equation}
which is a counterpart of \eqref{aux1006}. Observe that the last equality follows from the fact that $\lfloor n/(pq)\rfloor=0$ whenever $p>q>m_n>n^{1/2}$. We claim that
\begin{equation}\label{aux1007}
n^{-1}\sum_{m_n< p\leq n}\log^2 p\, \lfloor n/p \rfloor~=~2^{-1}\log^2 n-2^{-1}\log^2 m_n+o(\log^2 n-\log^2 m_n)
\end{equation}
and
\begin{equation}\label{aux1008}
\Big(n^{-1}\sum_{m_n< p\leq n}\log p\, \lfloor n/p\rfloor\Big)^2=\log^2(n/m_n)+o(\log^{2}(n/m_n)).
\end{equation}
To prove \eqref{aux1007}, write
\begin{align*}
\frac{1}{n}\sum_{m_n< p\leq n}\log^2 p\, \lfloor n/p \rfloor&=\sum_{m_n<p\leq n}p^{-1} \log^2 p+O(\log n)\\
&=2^{-1}\log^2 n-2^{-1}\log^2 m_n+o(\log^2 n-\log^2 m_n)+O(\log n)\\
&=2^{-1}\log^2 n-2^{-1}\log^2 m_n+o(\log^2 n-\log^2 m_n),
\end{align*}
where the first equality follows from the trivial estimate $n/p-1<\lfloor n/p \rfloor\leq n/p$, the second is a consequence of
\eqref{pnt_integral} with $f(x)=x^{-1}\log^2 x$ and $I_x=(m_n,\,n]$, and the third is implied by $$
0\leq \log n/(\log^2 n-\log^2 m_n)\leq 1/\log (n/m_n)~\to~ 0.$$ Formula \eqref{aux1008} follows along similar lines from
$$
n^{-1}\sum_{m_n< p\leq n}\log p\, \lfloor n/p\rfloor=\sum_{m_n<p\leq n}p^{-1} \log p+O(1)=\log (n/m_n)+o(\log(n/m_n)),
$$
where the second equality results from \eqref{pnt_integral} with $f(x)=x^{-1}\log x$ and the term $O(1)$ is killed by $o(\log (n/m_n))$.

Subtracting \eqref{aux1008} from \eqref{aux1007} and using that $\log^2 (n/m_n)\leq \log^2 n-\log^2 m_n$ we arrive at
$$
{\rm Var}(\log \widetilde{U}^{(n,m_n)})=2^{-1}(\log n-\log m_n)(3\log m_n-\log n)+\hat{R}_n,
$$
where
$$
\lim_{n\to\infty}|\hat{R}_n|/(\log^2 n-\log^2 m_n)=0.
$$
This implies \eqref{aux1009} because
\begin{multline*}
\frac{|\hat{R}_n|}{(\log n-\log m_n)(3\log m_n-\log n)}=\frac{|\hat{R}_n|}{\log^2 n-\log ^2 m_n}\frac{\log n+\log m_n}{3\log m_n-\log n}\\
\leq 4\frac{|\hat{R}_n|}{\log^2 n-\log ^2 m_n}~\to~ 0, 
\end{multline*}
where we have used $n^{1/2}<m_n\leq n$ for large $n$ for the inequality.

\end{proof}

\subsection{The big-oh estimate for the fourth central moment}

The proof of Lemma \ref{lem:fourth_moment} is very similar to the proof of Theorem \ref{lem:log_uniform_moments} but is essentially simpler for it only provides an upper estimate rather than the exact rate of growth.
\begin{lemma}\label{lem:fourth_moment}
Assume that $m_n\to\infty$ and $m_n=o(n)$ as $n\to\infty$. Then
\begin{equation}\label{eq:est2}
\E \Big(\log \widetilde{U}^{(n,m_n)}-\E \log \widetilde{U}^{(n,m_n)}\Big)^4 = O(\log^4 m_n),\quad n\to\infty.
\end{equation}
\end{lemma}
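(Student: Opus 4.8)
The plan is to reduce the claim, exactly as in the proof of Theorem~\ref{lem:variance}, to a fourth-moment estimate for the contribution of the \emph{small} primes $p\leq m_n$, which is amenable to a brute-force multinomial expansion, thereby treating the cases (A) and (B) simultaneously. Write $\xi_p:=\1_{\{\lambda_p(U^{(n)})\geq 1\}}$ and $\eta_p:=\xi_p-\E\xi_p$, so that $\log\widetilde U^{(n)}=\sum_{p\leq n}\log p\,\xi_p$ and
$$
\log\widetilde U^{(n,m_n)}=\sum_{m_n<p\leq n}\log p\,\xi_p=\log\widetilde U^{(n)}-\sum_{p\leq m_n}\log p\,\xi_p .
$$
Putting $V_n:=\log\widetilde U^{(n)}-\E\log\widetilde U^{(n)}$ and $W_n:=\sum_{p\leq m_n}\log p\,\xi_p-\E\sum_{p\leq m_n}\log p\,\xi_p$ we get $\log\widetilde U^{(n,m_n)}-\E\log\widetilde U^{(n,m_n)}=V_n-W_n$, and the inequality $(x-y)^4\leq 8(x^4+y^4)$ reduces \eqref{eq:est2} to the two bounds $\E V_n^4=O(1)$ and $\E W_n^4=O(\log^4 m_n)$; since $m_n\to\infty$, the first is absorbed into the second.

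First I would dispose of $\E V_n^4=O(1)$. Writing $V_n$ as $\big(\log U^{(n)}-\E\log U^{(n)}\big)-\big(\log U^{(n)}-\log\widetilde U^{(n)}-\E\log U^{(n)}+\E\log\widetilde U^{(n)}\big)$ and applying $(x-y)^4\leq 8(x^4+y^4)$ once more, this is immediate from Lemma~\ref{lem:log_uniform_moments} with $s=2$, which gives $\E(\log U^{(n)}-\E\log U^{(n)})^4\to\E(\mathcal{E}_1-1)^4<\infty$, together with \eqref{eq:k_est} in Lemma~\ref{lem:u_to_tilde_u}.

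The heart of the matter is $\E W_n^4=O(\log^4 m_n)$. Expanding,
$$
\E W_n^4=\sum_{p_1,p_2,p_3,p_4\leq m_n}\log p_1\log p_2\log p_3\log p_4\;\E\!\big(\eta_{p_1}\eta_{p_2}\eta_{p_3}\eta_{p_4}\big),
$$
and I would group the quadruples $(p_1,\dots,p_4)$ according to the partition of $\{1,2,3,4\}$ induced by coincidences among the indices; there are finitely many patterns, each giving a factor $\prod_{i=1}^k\log^{j_i}q_i$ over $k\leq 4$ distinct primes $q_1,\dots,q_k$ with exponents $j_i\in\N$, $\sum_{i=1}^k j_i=4$. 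For the mixed moment I would use the elementary bound $|\eta_q|^{\,j}\leq\xi_q+\E\xi_q$, valid for every $j\in\N$ because $\xi_q$ is Bernoulli and $|\eta_q|\leq 1$, which yields
$$
\Big|\E\prod_{i=1}^k\eta_{q_i}^{\,j_i}\Big|\leq\E\prod_{i=1}^k(\xi_{q_i}+\E\xi_{q_i})=\sum_{S\subseteq\{1,\dots,k\}}\Big(\prod_{i\notin S}\E\xi_{q_i}\Big)\E\prod_{i\in S}\xi_{q_i}\leq 2^k\prod_{i=1}^k q_i^{-1},
$$
where the last step uses $\E\xi_q=n^{-1}\lfloor n/q\rfloor\leq q^{-1}$ and $\E\prod_{i\in S}\xi_{q_i}=n^{-1}\lfloor n/\prod_{i\in S}q_i\rfloor\leq\prod_{i\in S}q_i^{-1}$. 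Hence each pattern contributes, up to a combinatorial constant, at most $\prod_{i=1}^k\big(\sum_{q\leq m_n}q^{-1}\log^{j_i}q\big)$, and by \eqref{pnt_integral} with $f(x)=x^{-1}\log^{j_i}x$ and $I_x=[2,m_n]$ each factor is $\sim j_i^{-1}\log^{j_i}m_n$. Therefore every pattern contributes $O(\log^{j_1+\dots+j_k}m_n)=O(\log^4 m_n)$, and summing the finitely many patterns gives $\E W_n^4=O(\log^4 m_n)$.

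The one point that requires care, and I expect to be the main thing to get right rather than any individual estimate, is that the multinomial expansion must not be run directly on $\log\widetilde U^{(n,m_n)}$: summing $q^{-1}\log^{j}q$ over the \emph{large} primes $m_n<q\leq n$ produces terms of order $\log^{j}n$, which is far larger than the target $\log^{j}m_n$ when $m_n$ is small (case (A)). This is precisely why the splitting $\log\widetilde U^{(n,m_n)}-\E(\cdot)=V_n-W_n$ is used: it transfers the ``heavy tail'' of primes onto the $O(1)$ quantity $V_n$, and is the same device already employed in the proof of Theorem~\ref{lem:variance}.
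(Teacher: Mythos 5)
Your proof is correct and follows the same overall strategy as the paper's: reduce to the contribution of the small primes $p\leq m_n$ by exploiting the fact that the full-range centered quantity $\log\widetilde U^{(n)}-\E\log\widetilde U^{(n)}$ has bounded fourth moment (via Lemma~\ref{lem:log_uniform_moments} with $s=2$ and \eqref{eq:k_est}), and then control the small-prime sum by a multinomial expansion together with $\E\prod_{i\in S}\xi_{q_i}=n^{-1}\lfloor n/\prod_{i\in S}q_i\rfloor\leq\prod_{i\in S}q_i^{-1}$ and \eqref{pnt_integral}. The one genuine deviation is in the reduction step: the paper invokes Lemma~\ref{lem:inter} with $s=2$, which yields the asymptotic equivalence $\E(X_n-Y_n)^4\sim\E Y_n^4$ but forces an extra verification that $\E Y_n^4\to\infty$ (done there by lower-bounding the fourth central moment by the squared variance and appealing to the variance asymptotics). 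Since only an upper bound is needed, your cruder inequality $(x-y)^4\leq 8(x^4+y^4)$ bypasses that verification entirely and is a mild simplification. Likewise, expanding the centered moment directly via $|\eta_q|^{\,j}\leq\xi_q+\E\xi_q$ is a clean alternative to the paper's route of expanding the uncentered fourth moment and finishing with $\E(X-\E X)^4\leq 8\E X^4$; both are correct, and dropping the distinctness constraint on the primes in each coincidence pattern only enlarges the bound, as you note.
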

\begin{proof}
We divide the proof into two steps. The purpose of Step 1 is to demonstrate that \eqref{eq:est2} is implied by
\begin{equation}\label{eq:est3}
\E \Big(\sum_{p\leq m_n}\log p\cdot \1_{\{\lambda_p(U^{(n)})\geq 1\}}-\E\sum_{p\leq m_n}\log p\cdot \1_{\{\lambda_p(U^{(n)})\geq 1\}}\Big)^4 = O(\log^4 m_n).
\end{equation}
Step 2 is devoted to showing that \eqref{eq:est3} holds true.

\noindent {\sc Step 1.} From Lemma \ref{lem:log_uniform_moments} with $s=2$ we know that
$$\lim_{n\to\infty}\E \big( \log U^{(n)} - \E \log U^{(n)} \big)^4 = 9.$$
This together with \eqref{eq:k_est} yields
$$\lim_{n\to\infty}\E \big( \log \widetilde U^{(n)} - \E \log \widetilde U^{(n)} \big)^4 = O(1).$$
The latter in combination with Lemma \ref{lem:inter} applied with $s=2$,
$$
X_n = \log \widetilde{U}^{(n)} - \E \log \widetilde{U}^{(n)}
=\sum_{p\leq n}\log p\cdot \1_{\{\lambda_p(U^{(n)})\geq 1\}}-\E\sum_{p\leq n}\log p\cdot \1_{\{\lambda_p(U^{(n)})\geq 1\}}
$$
and
$$
Y_n=\sum_{p\leq m_n}\log p\cdot \1_{\{\lambda_p(U^{(n)})\geq 1\}}-\E \sum_{p\leq m_n}\log p\cdot \1_{\{\lambda_p(U^{(n)})\geq 1\}}
$$
enables us to conclude that \eqref{eq:est2} is implied by \eqref{eq:est3} and
$$
\lim_{n\to\infty} \E \Big(\sum_{p\leq m_n}\log p\cdot \1_{\{\lambda_p(U^{(n)})\geq 1\}}-\E \sum_{p\leq m_n}\log p\cdot \1_{\{\lambda_p(U^{(n)})\geq 1\}}\Big)^4=\infty.
$$
The latter holds true in view of
\begin{multline*}
\E \Big(\sum_{p\leq m_n}\log p\cdot \1_{\{\lambda_p(U^{(n)})\geq 1\}}-\E \sum_{p\leq m_n}\log p\cdot \1_{\{\lambda_p(U^{(n)})\geq 1\}}\Big)^4\\
\geq \Big( {\rm Var}\,\Big(\sum_{p\leq m_n}\log p\cdot \1_{\{\lambda_p(U^{(n)})\geq 1\}}\Big)\Big)^2~\to~\infty. 
\end{multline*}
Here, divergence is justified by \eqref{eq:variance_sum_to_m} when $m_n\leq n^{1/2}$ with $m_n\to\infty$ and by Theorem \ref{lem:variance2} together with $$ {\rm Var}\,\Big(\sum_{p\leq m_n}\log p\cdot \1_{\{\lambda_p(U^{(n)})\geq 1\}}\Big)~\sim~\Var (\log \widetilde{U}^{(n,m_n)})$$ (which holds true by another application of Lemma \ref{lem:inter}) when $m_n> n^{1/2}$ with $m_n=o(n)$.

\noindent {\sc Step 2}. Passing to the proof of \eqref{eq:est3} we first note that, for $a>0$,
\begin{equation}\label{eq:sum_log_a_asymp}
\sum_{p\leq m_n}p^{-1} \log^a p=\int_{[2,\,m_n]}x^{-1}\log^a x\,{\rm d}\pi(x)\sim a^{-1}\log^a m_n, 
\end{equation}
where the asymptotic equivalence follows from \eqref{pnt_integral} with $f(x)=x^{-1}\log^{a} x$. With this at hand, we obtain
\begin{align*}
&\hspace{-0.2cm}\E \Big(\sum_{p\leq m_n}\log p\cdot \1_{\{\lambda_p(U^{(n)})\geq 1\}}\Big)^4\\
&=\sum_{p\leq m_n}\log^4 p\, \lfloor n/p\rfloor n^{-1}\\
&+4\sum_{p\neq q\leq m_n}\log^3 p\log q\times \lfloor n/(pq)\rfloor n^{-1}\\
&+6\sum_{p<q\leq m_n}\log^2 p\log^2 q\times \lfloor n/(pq)\rfloor n^{-1}\\
&+12 \sum_{p\neq q\neq r\leq m_n}\log^2 p\log q\log r\times \lfloor n/(pqr)\rfloor n^{-1}\\
&+24\sum_{p<q<r<s\leq m_n}\log p\log q\log r\log s\times \lfloor n/(pqrs)\rfloor n^{-1}.
\end{align*}
Each summand is $O(\log^4 m_n)$, 
by \eqref{eq:sum_log_a_asymp}. For example, for the fourth summand this is a consequence of
\begin{multline*}
\sum_{p\neq q\neq r\leq m_n}\log^2 p\log q\log r\times \lfloor n/(pqr)\rfloor n^{-1}\leq\sum_{p,q,r\leq m_n}(pqr)^{-1}\log^2 p\log q\log r 
\\=\Big(\sum_{p\leq m_n}p^{-1}\log^2 p 
\Big)\Big(\sum_{q\leq m_n}q^{-1}\log q 
\Big)^2  = O(\log^4 m_n). 
\end{multline*}
Since $\E (X-\E X)^4\leq 8\E X^4$ for any random variable $X$ with $\me X^4<\infty$, \eqref{eq:est3} follows.
\end{proof}

\section{Proof of Theorem \ref{thm:main_result2}}\label{sec:proof_1}

We prove Theorem \ref{thm:main_result2} via the sequence of lemmas. For $n\in\mn$ and $t\geq 0$, put
\begin{equation}\label{eq:z_n_definition}
Z_n(t):=\sum_{p\leq n}\log p \cdot\1_{\{\max_{1\leq k\leq \lfloor m_n t\rfloor}\lambda_p(U_k^{(n)})\geq 1\}}.
\end{equation}

\begin{lemma}\label{lem:red1}
Assume that $m_n\to\infty$ and $m_n=o(n)$ as $n\to\infty$. Then, for all $T>0$, 
$$\E \left(\sup_{t\in [0,\,T]} \big(Y_n(\lfloor m_n t\rfloor) - Z_n(t) \big)\right)=O(m_n^{1/2}),\quad n\to\infty.$$
\end{lemma}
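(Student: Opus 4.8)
The plan is to bound the nonnegative random variable $\sup_{t\in[0,T]}\big(Y_n(\lfloor m_nt\rfloor)-Z_n(t)\big)$ by a single deterministic-in-structure quantity that no longer depends on $t$, and then to estimate its expectation. The key observation is that for a fixed realization, both $Y_n(\lfloor m_nt\rfloor)$ and $Z_n(t)$ are built from the same variables $U_1^{(n)},\ldots,U_{\lfloor m_nt\rfloor}^{(n)}$, and the difference is governed entirely by the discrepancy between $\max_{1\le k\le \lfloor m_nt\rfloor}\lambda_p(U_k^{(n)})$ and its truncation at $1$. Precisely, using \eqref{lcm} and \eqref{eq:z_n_definition},
$$
Y_n(\lfloor m_nt\rfloor)-Z_n(t)=\sum_{p\le n}\log p\,\Big(\max_{1\le k\le\lfloor m_nt\rfloor}\lambda_p(U_k^{(n)})-\1_{\{\max_{1\le k\le\lfloor m_nt\rfloor}\lambda_p(U_k^{(n)})\ge 1\}}\Big)=\sum_{p\le n}\log p\,\big(M_p(t)-1\big)_+,
$$
where $M_p(t):=\max_{1\le k\le\lfloor m_nt\rfloor}\lambda_p(U_k^{(n)})$. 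Since $t\mapsto M_p(t)$ is nondecreasing, each summand is nondecreasing in $t$, so the supremum over $[0,T]$ is attained at $t=T$; hence
$$
\sup_{t\in[0,T]}\big(Y_n(\lfloor m_nt\rfloor)-Z_n(t)\big)=\sum_{p\le n}\log p\,\big(M_p(T)-1\big)_+\le \sum_{p\le n}\log p\sum_{k=1}^{\lfloor m_nT\rfloor}\big(\lambda_p(U_k^{(n)})-1\big)_+,
$$
the last step because $(\max_k a_k - 1)_+\le \sum_k (a_k-1)_+$ for nonnegative integers $a_k$.

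Taking expectations and using that the $U_k^{(n)}$ are i.i.d.\ copies of $U^{(n)}$, the right-hand side equals $\lfloor m_nT\rfloor\,\E\big(\log U^{(n)}-\log\widetilde U^{(n)}\big)$, where $\widetilde U^{(n)}=\prod_{p\le n}p^{\1_{\{\lambda_p(U^{(n)})\ge 1\}}}$ as in Lemma \ref{lem:u_to_tilde_u}. It remains to show $\E\big(\log U^{(n)}-\log\widetilde U^{(n)}\big)=O(1)$. This follows from Lemma \ref{lem:u_to_tilde_u}: by \eqref{eq:k_est1} we have $\E\big(\log U^{(n)}-\log\widetilde U^{(n)}\big)^4=O(1)$, and by Jensen (or the Lyapunov inequality) $\E|X|\le (\E X^4)^{1/4}$, so $\E\big(\log U^{(n)}-\log\widetilde U^{(n)}\big)\le (K_n)^{1/4}=O(1)$. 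Therefore
$$
\E\left(\sup_{t\in[0,T]}\big(Y_n(\lfloor m_nt\rfloor)-Z_n(t)\big)\right)\le \lfloor m_nT\rfloor\cdot O(1)=O(m_n),
$$
which is even slightly stronger than the claimed $O(m_n^{1/2})$ unless $T$ is allowed to vary — but in fact $m_nT\cdot O(1)$ is $O(m_n)$ only, not $O(m_n^{1/2})$, so a sharper estimate is needed: one must instead bound $\E\big(\log U^{(n)}-\log\widetilde U^{(n)}\big)$ by $o(m_n^{-1/2})$, which is false, or exploit that the expectation of $(\lambda_p(U^{(n)})-1)_+$ decays like $p^{-2}$ and that $\sum_p p^{-2}\log p<\infty$, giving directly $\E\big(\log U^{(n)}-\log\widetilde U^{(n)}\big)\le\mathrm{const}\cdot\sum_p p^{-2}\log p=O(1)$, and then noting that the \emph{supremum} contributes an extra factor that is genuinely of order $m_n^{1/2}$ rather than $m_n$ because the relevant maxima concentrate.

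The main obstacle is exactly this last point: the crude bound $(\max_k a_k-1)_+\le\sum_k(a_k-1)_+$ loses the cancellation and yields only $O(m_n)$. To reach $O(m_n^{1/2})$ I would instead split the primes at $m_n$. For $p>m_n$ one shows $\E\max_{1\le k\le\lfloor m_nT\rfloor}(\lambda_p(U_k^{(n)})-1)_+ = O(m_n\,p^{-2})$ (union bound over $k$), and $\sum_{p>m_n}\log p\cdot m_n p^{-2}=O(m_n\cdot m_n^{-1}\log m_n)=O(\log m_n)$ by \eqref{pnt_integral}. For $p\le m_n$ one uses that the number of prime powers $p^2\le n$ with $p\le m_n$ dividing at least one of $m_n$ samples, weighted by $\log p$, has expectation bounded by $m_n\sum_{p\le m_n}p^{-2}\log p=O(m_n)$ — still too big. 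The correct route, which I expect the authors to follow, is to bound the \emph{centered} quantity via a second-moment (Doob/maximal) inequality: write $Y_n(\lfloor m_nt\rfloor)-Z_n(t)$ as a sum over $k\le \lfloor m_nt\rfloor$ of nonnegative i.i.d.\ increments with mean $O(1)$ and variance $O(1)$, observe the partial sums form a submartingale, apply Doob's $L^2$ maximal inequality to get $\E\sup_{t\le T}(\cdots)\le\E(\cdots)|_{t=T}$, and then — since the quantity at $t=T$ has mean $O(m_n)$ and we want $O(m_n^{1/2})$ — realize that Lemma \ref{lem:red1} as stated must actually concern the \emph{fluctuation} $Y_n(\lfloor m_nt\rfloor)-Z_n(t)-\E(\cdots)$ or a similar centered object; I would re-read the statement, and failing that, prove the displayed inequality with the honest bound, namely $\E\sup_{t\le T}(Y_n(\lfloor m_nt\rfloor)-Z_n(t))\le\mathrm{const}(T)\cdot m_n^{1/2}$ by combining the $p>m_n$ estimate $O(\log m_n)=o(m_n^{1/2})$ with a Cauchy–Schwarz bound on the $p\le m_n$ contribution: its square has expectation $\le m_n\cdot\E\big(\sum_{p\le m_n}\log p(\lambda_p(U^{(n)})-1)_+\big)^2=m_n\cdot O(1)$ by the moment computation in the proof of Lemma \ref{lem:u_to_tilde_u}, whence that contribution is $O(m_n^{1/2})$ in $L^1$ after another maximal inequality. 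Assembling the two ranges gives the claim.
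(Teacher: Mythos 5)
Your reduction is fine as far as it goes: the identity $Y_n(\lfloor m_nt\rfloor)-Z_n(t)=\sum_{p\le n}\log p\,(M_p(t)-1)_+$ and the observation that each summand is nondecreasing in $t$, so the supremum sits at $t=T$, are exactly the starting point of the paper's argument. But from there the proof has a genuine gap, and you correctly sense it without closing it. Bounding $(M_p(T)-1)_+$ by $\sum_k(\lambda_p(U_k^{(n)})-1)_+$ can only ever give $O(m_n)$, because each i.i.d.\ summand $\sum_{p}\log p\,(\lambda_p(U_k^{(n)})-1)_+$ has expectation bounded \emph{below} by a positive constant (e.g.\ $\log 2\cdot\P\{\lambda_2(U^{(n)})\ge 2\}$). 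Your attempted rescue via Cauchy--Schwarz is wrong for the same reason: for i.i.d.\ nonnegative $X_k$ with $\E X_1>0$ one has $\E\bigl(\sum_{k=1}^{m}X_k\bigr)^2\ge m^2(\E X_1)^2$, not $\le m\,\E X_1^2$; the inequality $(\sum_k X_k)^2\le m\sum_k X_k^2$ gives $m^2\E X_1^2$ after taking expectations, so the $p\le m_n$ block remains $O(m_n)$ in $L^1$. No maximal inequality or centering fixes this, and the lemma is stated correctly as is.

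The missing idea is a different split and a different treatment of small primes. Split at $m_n^{1/2}$, not at $m_n$. For $p\le m_n^{1/2}$ do \emph{not} compare the maximum to the sum over $k$: instead use that $\max_{1\le k\le \lfloor m_nT\rfloor}\lambda_p(U_k^{(n)})$ is stochastically dominated by the maximum of $\lfloor m_nT\rfloor$ geometric variables with parameter $1/p$ (since $\P\{\lambda_p(U^{(n)})\ge j\}=n^{-1}\lfloor n/p^j\rfloor\le p^{-j}$), whose expectation is at most $(1+\log(m_nT))/\log p$ (couple with exponential maxima and sum harmonic numbers). Summing $\log p$ times this over $p\le m_n^{1/2}$ gives $(1+\log(m_nT))\,\pi(m_n^{1/2})=O(m_n^{1/2})$ by the prime number theorem --- the factor $\log p$ cancels, and the count of primes up to $m_n^{1/2}$ is $O(m_n^{1/2}/\log m_n)$, which absorbs the $\log(m_nT)$. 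For $p>m_n^{1/2}$ the summand is nonzero only on $\{M_p(T)\ge 2\}$, and there the union bound $1-(1-x)^{\lfloor m_nT\rfloor}\le m_nTx$ applied to $\sum_{j\ge2}\P\{M_p(T)\ge j\}$ yields a contribution of at most ${\rm const}\cdot m_n\sum_{p>m_n^{1/2}}p^{-2}\log p=O(m_n\cdot m_n^{-1/2})=O(m_n^{1/2})$, essentially the computation you carried out for $p>m_n$ but performed at the threshold where the two error terms balance.
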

\begin{proof}
Fix any $T>0$. Then, for all $t\in [0,T]$,
\begin{align*}
0 & \leq Y_n(\lfloor m_n t\rfloor) - Z_n(t)\\
&=\sum_{p\leq n} \log p \left(\max_{1\leq k\leq \lfloor m_n t\rfloor}\lambda_p(U_k^{(n)})-\1_{\{\max_{1\leq k\leq \lfloor m_n t\rfloor}\lambda_p(U_k^{(n)})\geq 1\}}\right)\\
&\leq \sum_{p\leq m_n^{1/2}}\log p \max_{1\leq k\leq \lfloor m_n t\rfloor}\lambda_p(U_k^{(n)})\\
&+\sum_{m_n^{1/2}<p\leq n} \log p \max_{1\leq k\leq \lfloor m_n t\rfloor}\lambda_p(U_k^{(n)})\1_{\{\max_{1\leq k\leq \lfloor m_n t\rfloor}\lambda_p(U_k^{(n)})\geq 2\}}\\
&:=I_n+J_n.
\end{align*}
It suffices to check that
\begin{equation}\label{eq:Y1_and_Y3_negligible}
\E I_n=O(m_n^{1/2})\quad\text{and}\quad \E J_n=O(m_n^{1/2}).
\end{equation}
To prove the first relation, write
\begin{align*}
\E I_n 
&= \sum_{p\leq m^{1/2}_n}\log p \cdot \E (\max_{1\leq k\leq \lfloor m_n T\rfloor}\lambda_p(U_k^{(n)}))\\
&\leq \sum_{p\leq m^{1/2}_n}\log p\, \sum_{j\geq 1}\P\Big\{\max_{1\leq k\leq \lfloor m_n T\rfloor}\lambda_p(U_k^{(n)})\geq j\Big\}\\
&= \sum_{p\leq m^{1/2}_n}\log p\, \sum_{j\geq 1}(1-(1-n^{-1}\lfloor n/p^j \rfloor)^{\lfloor m_n T\rfloor})\\
&\leq \sum_{p\leq m^{1/2}_n}\log p\, \sum_{j\geq 1}(1-(1-p^{-j})^{\lfloor m_n T\rfloor}).
\end{align*}
Note that
$$
\sum_{j\geq 1}(1-(1-p^{-j})^{\lfloor m_n T\rfloor})=\E (\max_{1\leq k\leq \lfloor m_n T\rfloor}\mathcal{G}_k(p)),$$
where $(\mathcal{G}_k(p))_{k\in\N}$ are  mutually independent random variables with geometric distribution \eqref{geom}. Denote by $\mathcal{E}_1$, $\mathcal{E}_2,\ldots$ independent copies of a random variable having the exponential distribution of unit mean. Using the distributional equality
$$
\max_{1\leq k\leq \lfloor m_n T\rfloor}\mathcal{G}_k(p)\eqdistr \max_{1\leq k\leq \lfloor m_n T\rfloor}\lfloor \log^{-1}p\, \mathcal{E}_k \rfloor=\lfloor \log^{-1}p \max_{1\leq k\leq \lfloor m_n T\rfloor}\mathcal{E}_k \rfloor
$$
we conclude that
\begin{align*}
\sum_{j\geq 1}(1-(1-p^{-j})^{\lfloor m_n T\rfloor})=\E (\max_{1\leq k\leq \lfloor m_n T\rfloor}\mathcal{G}_k(p))
=\E \lfloor \log^{-1} p \max_{1\leq k\leq \lfloor m_n T\rfloor}\mathcal{E}_k \rfloor\\
\leq \E (\log^{-1} p \max_{1\leq k\leq \lfloor m_n T\rfloor}\mathcal{E}_k)=\log^{-1} p \sum_{k=1}^{\lfloor m_n T\rfloor}k^{-1} \leq \log^{-1} p (1+\log (m_n T)).
\end{align*}
Hence,
$$\E I_n \leq \sum_{p\leq m^{1/2}_n} (1+\log (m_n T))=(1+\log (m_n T))\pi(m_n^{1/2}),$$
where $\pi$ is the prime counting function. An application of \eqref{pnt} proves the first relation in \eqref{eq:Y1_and_Y3_negligible}. 

We are now passing to the second relation in \eqref{eq:Y1_and_Y3_negligible}: 
\begin{align*}
&\hspace{-0.3cm}\E J_n \leq \sum_{m^{1/2}_n< p\leq n}\log p \cdot \E (\max_{1\leq k\leq \lfloor m_n T\rfloor}\lambda_p(U_k^{(n)}))\1_{\{\max_{1\leq k\leq \lfloor m_n T\rfloor}\lambda_p(U_k^{(n)})\geq 2\}}\\
&\leq 2\sum_{m^{1/2}_n< p\leq n}\log p \cdot \sum_{j\geq 2}\P\Big\{ \max_{1\leq k\leq \lfloor m_n T\rfloor}\lambda_p(U_k^{(n)})\geq j\Big\}\\
&= 2\sum_{m^{1/2}_n< p\leq n}\log p\,\sum_{j\geq 2}(1-(1-n^{-1}\lfloor n/p^j \rfloor)^{\lfloor m_n T\rfloor})\\
&\leq 2\sum_{m^{1/2}_n< p\leq n}\log p\, \sum_{j\geq 2}(1-(1-p^{-j})^{\lfloor m_n T\rfloor}).
\end{align*}
Using the inequality $1-(1-x)^{\lfloor m_n T\rfloor}\leq x\lfloor m_n T\rfloor\leq x m_n T$, $x\in [0,\,1]$ we infer
\begin{multline*}
\E J_n \leq 2m_n T\sum_{m^{1/2}_n< p\leq n}\log p\,\sum_{j\geq 2}p^{-j}\\
=2m_n T\sum_{m^{1/2}_n< p\leq n}(p^2-p)^{-1} \log p \leq 4m_n T\sum_{m^{1/2}_n< p}p^{-2}\log p=O(m_n^{1/2})
\end{multline*}
having utilized \eqref{pnt_integral} with $f(x)=x^{-2}\log x$ for the last equality.
\end{proof}

Note that
\begin{align*}
\E Z_n(t)&=\E \Big(\sum_{p\leq n}\log p \cdot\1_{\{\max_{1\leq k\leq \lfloor m_n t\rfloor}\lambda_p(U_k^{(n)})\geq 1\}}\Big)\\
&=\sum_{p\leq n}\log p\, (1-(1-n^{-1}\lfloor n/p \rfloor)^{\lfloor m_n t\rfloor})=c_n(\lfloor m_n t\rfloor).
\end{align*}
This equality and Lemma \ref{lem:red1} demonstrate that Theorem \ref{thm:main_result2} follows once we can show that 
\begin{equation}\label{eq:clt_intermediate}
\Big(\frac{Z_n(t)-\E Z_n(t)}{\sqrt{a_n}}\Big)_{t\geq 0} \tofd (B(t))_{t\geq 0},
\end{equation}
where
\begin{equation}\label{eq:a_definition}
a_n=\begin{cases}
2^{-1}m_n\log^2 m_n, & \text{ in case (A)},\\
 2^{-1}m_n(\log n-\log m_n)(3\log m_n-\log n),& \text{ in case (B)}.
\end{cases}
\end{equation}
For later use recall that in view of Theorems \ref{lem:variance} and \ref{lem:variance2}
\begin{equation}\label{eq:an}
a_n \sim m_n \Var \big( \log \widetilde U^{n,m_n}\big).
\end{equation}
As a preparation for the proof of \eqref{eq:clt_intermediate} we need a couple of lemmas. 
\begin{lemma}\label{red3}
Assume that $m_n\to\infty$ and $m_n=o(n)$ as $n\to\infty$. Then
\begin{equation}\label{eq:clt_intermediate_step1}
\Var \Big(\sum_{p\leq m_n}\log p \cdot \1_{\{\max_{1\leq k\leq m_n}\lambda_p(U_k^{(n)})\geq 1\}}\Big)=O(m_n\log m_n),\quad n\to\infty.
\end{equation}
\end{lemma}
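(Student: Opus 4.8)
The plan is to expand the variance over pairs of primes and to estimate the diagonal and off-diagonal contributions separately. Write $W_p:=\1_{\{\max_{1\le k\le m_n}\lambda_p(U_k^{(n)})\ge 1\}}$, so that $W_p=0$ precisely when none of $U_1^{(n)},\ldots,U_{m_n}^{(n)}$ is divisible by $p$, and put $A_p:=n^{-1}\lfloor n/p\rfloor$ and $A_{pq}:=n^{-1}\lfloor n/(pq)\rfloor$ for distinct primes. By independence of the $U_k^{(n)}$ and inclusion--exclusion for a single uniform variable,
$$\P\{W_p=0\}=(1-A_p)^{m_n},\qquad \P\{W_p=0,W_q=0\}=(1-A_p-A_q+A_{pq})^{m_n},$$
whence $\Cov(W_p,W_q)=(1-A_p-A_q+A_{pq})^{m_n}-\bigl((1-A_p)(1-A_q)\bigr)^{m_n}$. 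For the diagonal I would just use $\Var(W_p)\le 1$, so that $\sum_{p\le m_n}\log^2 p\,\Var(W_p)\le \sum_{p\le m_n}\log^2 p=O(m_n\log m_n)$ by the prime number theorem (e.g.\ \eqref{pnt_integral} with $f(x)=\log^2 x$, or Chebyshev's bound $\vartheta(x)=O(x)$).

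The heart of the proof is the off-diagonal estimate. Fix distinct primes $p<q\le m_n$ and set $x:=(1-A_p)(1-A_q)$, $y:=1-A_p-A_q+A_{pq}$; both lie in $[0,1]$ (for $y$: since $A_p\ge A_{pq}$ one has $y\le 1-A_q$, and $y\ge 1-1/p-1/q>0$ for distinct primes). From the elementary bound $|y^{m_n}-x^{m_n}|\le m_n|y-x|\max(x,y)^{m_n-1}$, from $|y-x|=|A_{pq}-A_pA_q|\le 1/(pq)$ (both terms are nonnegative and at most $1/(pq)$), and from $\max(x,y)\le 1-A_q\le e^{-A_q}$ together with $A_q\ge 1/q-1/n\ge 1/(2q)$, valid for all large $n$ uniformly in $q\le m_n=o(n)$, I obtain the decisive estimate
$$|\Cov(W_p,W_q)|\le \frac{m_n}{pq}\,e^{-m_n/(4q)},\qquad p<q\le m_n.$$

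It then remains to sum this. By symmetry and Mertens' bound $\sum_{p<q}p^{-1}\log p=O(\log q)$,
$$\sum_{p\ne q\le m_n}\log p\log q\,|\Cov(W_p,W_q)|\le 2m_n\sum_{q\le m_n}\frac{\log q}{q}\,e^{-m_n/(4q)}\sum_{p<q}\frac{\log p}{p}=O(m_n)\sum_{q\le m_n}\frac{\log^2 q}{q}\,e^{-m_n/(4q)},$$
and the lemma follows once one checks $T_n:=\sum_{q\le m_n}q^{-1}\log^2 q\,e^{-m_n/(4q)}=O(\log m_n)$. The range $q\le m_n^{1/2}$ contributes $O(\log^2 m_n)\,e^{-m_n^{1/2}/4}=o(1)$, so the point is the range $m_n^{1/2}<q\le m_n$, which I would split dyadically: for $0\le j\le\lfloor\tfrac12\log_2 m_n\rfloor$ put $B_j:=\{q\ \mathrm{prime}:m_n2^{-j-1}<q\le m_n2^{-j}\}$, which covers $(m_n^{1/2},m_n]$. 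On $B_j$ one has $e^{-m_n/(4q)}\le e^{-2^j/4}$, $\log^2 q\le\log^2 m_n$, $q^{-1}\le 2^{j+1}m_n^{-1}$, and, crucially, $\#B_j\le\pi(m_n2^{-j})\le 2C m_n2^{-j}/\log m_n$ by \eqref{pnt} (here $m_n2^{-j}\ge m_n^{1/2}$, so the log in the prime density is of order $\log m_n$). Multiplying these four bounds, the block $B_j$ contributes $O(\log m_n)\,e^{-2^j/4}$, and summing the resulting super-geometric series over $j$ gives $T_n=O(\log m_n)$. Combined with the diagonal bound this yields $\Var\bigl(\sum_{p\le m_n}\log p\,W_p\bigr)=O(m_n\log m_n)$.

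The step I expect to be the main obstacle is precisely the off-diagonal sum. The crude bound $|\Cov(W_p,W_q)|\le m_n/n$ only gives $O(m_n^3/n)$, which is useless in case (B) ($m_n>n^{1/2}$), and even $|\Cov(W_p,W_q)|\le m_n/(pq)$ alone produces $O(m_n\log^2 m_n)$, one logarithmic factor too many; one genuinely has to extract the exponential damping $e^{-m_n/(4q)}$ out of $\max(x,y)^{m_n-1}$ and then recover the missing $\log m_n$ from the density of primes of size comparable to $m_n$ via \eqref{pnt}. Everything else is routine Mertens/PNT bookkeeping.
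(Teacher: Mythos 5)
Your proof is correct, but it follows a genuinely different route from the paper's. The paper splits into the two regimes of the theorem: when $m_n\le n^{1/2}$ it simply bounds $|\Cov(W_p,W_q)|\le m_n\,|n^{-1}\lfloor n/(pq)\rfloor-n^{-2}\lfloor n/p\rfloor\lfloor n/q\rfloor|\le m_n/n$, so the off-diagonal sum is $O(n^{-1}m_n^3)=O(m_n)$; when $m_n>n^{1/2}$ it further splits the primes at $n^{1/2}$, reuses the crude bound below $n^{1/2}$, and above $n^{1/2}$ observes that $\lfloor n/(pq)\rfloor=0$ forces $\Cov(W_p,W_q)\le 0$, so the variance of that block is dominated by its diagonal $\sum\log^2 p=O(m_n\log m_n)$. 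You instead prove the single uniform estimate $|\Cov(W_p,W_q)|\le (m_n/(pq))\,e^{-m_n/(4q)}$ for $p<q\le m_n$ and then beat the extra logarithm in $\sum_{q}q^{-1}\log^2 q$ by combining the exponential damping with the prime density $\pi(x)\sim x/\log x$ in a dyadic decomposition of $(m_n^{1/2},m_n]$; all the individual steps (the inclusion--exclusion for $\P\{W_p=0,W_q=0\}$, the telescoping bound $|y^{m}-x^{m}|\le m|y-x|\max(x,y)^{m-1}$, the uniform lower bound $A_q\ge 1/(2q)$ for $q\le m_n=o(n)$, Mertens, and the block counts $\#B_j$) check out. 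What each approach buys: the paper's argument is shorter and uses only trivial magnitude bounds plus one sign observation, but it is tied to the case distinction $m_n\lessgtr n^{1/2}$; yours is a unified argument valid across both regimes and yields a quantitatively sharper covariance bound, at the cost of the extra dyadic bookkeeping. You correctly diagnose why the two naive bounds ($m_n/n$ and $m_n/(pq)$ alone) fail, which is exactly the obstruction the paper circumvents by its sign trick.
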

\begin{proof}
We start with
\begin{align*}
&\Var \Big(\sum_{p\leq m_n}\log p \cdot \1_{\{\max_{1\leq k\leq m_n}\lambda_p(U_k^{(n)})\geq 1\}}\Big)\leq \sum_{p\leq m_n}\log^2 p\\
&+2\sum_{p<q\leq m_n}\log p\log  q \cdot \Cov \Big(\1_{\{\max_{1\leq k\leq m_n}\lambda_p(U_k^{(n)})\geq 1\}},\1_{\{\max_{1\leq k\leq m_n }\lambda_q(U_k^{(n)})\geq 1\}}\Big).
\end{align*}
In view of \eqref{pnt_integral} with $f(x)=\log^2 x$ the first term is asymptotically equivalent to $m_n\log m_n$.
Further,
\begin{align*}
&\hspace{-0.3cm}\Big|\Cov \Big(\1_{\{\max_{1\leq k\leq m_n }\lambda_p(U_k^{(n)})\geq 1\}},\1_{\{\max_{1\leq k\leq m_n}\lambda_q(U_k^{(n)})\geq 1\}}\Big)\Big|\\
&=\Big|\Cov \Big(\1_{\{\max_{1\leq k\leq m_n }\lambda_p(U_k^{(n)})=0\}},\1_{\{\max_{1\leq k\leq m_n }\lambda_q(U_k^{(n)})=0\}}\Big)\Big|\\
&=\Big|\P^{m_n}\{\lambda_p(U^{(n)})=0,\lambda_q(U^{(n)})=0\}-\P^{m_n}\{\lambda_p(U^{(n)})=0\}\P^{m_n}\{\lambda_q(U^{(n)})=0\}\Big|\\
&\leq m_n\Big|\P\{\lambda_p(U^{(n)})=0,\lambda_q(U^{(n)})=0\}-\P\{\lambda_p(U^{(n)})=0\}\P\{\lambda_q(U^{(n)})=0\}\Big|\\
&=m_n|n^{-1}\lfloor n/(pq)\rfloor-n^{-2}\lfloor n/p \rfloor \lfloor n/q \rfloor|,
\end{align*}
where the equalities
\begin{align}
\P\{\lambda_p(U^{(n)})=0\}&=1-n^{-1}\lfloor n/p \rfloor,\label{eq:divisibility_probabilities1}\\
\P\{\lambda_p(U^{(n)})=0,\lambda_q(U^{(n)})=0\}&=1-n^{-1}\lfloor n/p\rfloor-n^{-1} \lfloor n/q\rfloor+n^{-1}\lfloor n/(pq)\rfloor\label{eq:divisibility_probabilities2}
\end{align}
which hold for prime $p\neq q$ have been utilized. For later use, we note that whenever $q>p>n^{1/2}$ we have
\begin{eqnarray}\label{aux1013}
&&\Cov \Big(\1_{\{\max_{1\leq k\leq m_n }\lambda_p(U_k^{(n)})\geq 1\}},\1_{\{\max_{1\leq k\leq m_n}\lambda_q(U_k^{(n)})\geq 1\}}\Big)\notag\\&=&(1-n^{-1}\lfloor n/p\rfloor-n^{-1} \lfloor n/q\rfloor)^{m_n}\\&-&(1-n^{-1}\lfloor n/p\rfloor-n^{-1} \lfloor n/q\rfloor+n^{-2}\lfloor n/p\rfloor \lfloor n/q\rfloor)^{m_n}\leq 0\notag
\end{eqnarray}
as a consequence of $\lfloor n/(pq)\rfloor=0$. The inequalities $$|n^{-1}\lfloor n/(pq)\rfloor-n^{-2}\lfloor n/p \rfloor \lfloor n/q \rfloor|\leq n^{-1}$$ readily imply that
\begin{multline*}
2\Big|\sum_{p<q\leq m_n}\log p\log  q\, \Cov \Big(\1_{\{\max_{1\leq k\leq m_n}\lambda_p(U_k^{(n)})\geq 1\}}, \1_{\{\max_{1\leq k\leq m_n }\lambda_q(U_k^{(n)})\geq 1\}}\Big)\Big|\\
\leq 2n^{-1} m_n \sum_{p<q\leq m_n}\log p\log q\leq n^{-1} m_n \Big(\sum_{p\leq m_n}\log p\Big)^2 ~\sim~ n^{-1} m_n^3,
\end{multline*}
where the last asymptotic relation is a consequence of \eqref{pnt_integral} with $f(x)=\log x$. Thus, if $m_n\leq n^{1/2}$ and $m_n\to\infty$, the proof of \eqref{eq:clt_intermediate_step1} is complete, for the right-hand side of the last centered formula is $O(m_n)$.

Assume that $m_n>n^{1/2}$ and $m_n=o(n)$. We shall use a representation
\begin{multline*}
\sum_{p\leq m_n}\log p\cdot \1_{\{\max_{1\leq k\leq m_n}\lambda_p(U_k^{(n)})\geq 1\}}=
\sum_{p\leq n^{1/2}}\log p\cdot \1_{\{\max_{1\leq k\leq m_n}\lambda_p(U_k^{(n)})\geq 1\}}\\+
\sum_{n^{1/2}<p\leq m_n}\log p\cdot \1_{\{\max_{1\leq k\leq m_n}\lambda_p(U_k^{(n)})\geq 1\}}.
\end{multline*}
Repeating verbatim the previous argument yields
$$
\Var \left(\sum_{p\leq n^{1/2}}\log p\cdot \1_{\{\max_{1\leq k\leq m_n}\lambda_p(U_k^{(n)})\geq 1\}}\right)=O(n^{1/2}\log n)=O(m_n\log m_n).
$$
Finally, we infer with the help of \eqref{aux1013} that
\begin{multline*}
0\leq \Var\Big(\sum_{n^{1/2}<p\leq m_n}\log p\cdot \1_{\{\max_{1\leq k\leq m_n}\lambda_p(U_k^{(n)})\geq 1\}}\Big)\\ \leq \sum_{n^{1/2}<p\leq m_n}\log^2 p \, \Var (\1_{\{\max_{1\leq k\leq m_n}\lambda_p(U_k^{(n)})\geq 1\}})\leq \sum_{p\leq m_n}\log^2 p=O(m_n\log m_n).
\end{multline*}
\end{proof}

Observe that $a_n/(m_n\log m_n)\to\infty$. In case (A) this is obvious. In case (B) this is a consequence of
\begin{multline*}
a_n/(m_n\log m_n)=2^{-1}(\log n-\log m_n)(3\log m_n-\log n)/\log m_n\\ \geq 2^{-1}(\log n-\log m_n)\to\infty.
\end{multline*}
Thus, using \eqref{eq:clt_intermediate_step1} in combination with Chebyshev's inequality we conclude that \eqref{eq:clt_intermediate} is equivalent to
\begin{multline}\label{eq:clt_intermediate2}
\left(a_n^{-\frac 12}\!\!\sum_{m_n<p\leq n}\!\!\!\log p\, (\1_{\{\max_{1\leq k\leq \lfloor m_n t\rfloor }\lambda_p(U_k^{(n)})\geq 1\}}-\mmp\{\max_{1\leq k\leq \lfloor m_n t\rfloor }\lambda_p(U_k^{(n)})\geq 1\})\right)_{t\geq 0}\\ \tofd (B(t))_{t\geq 0}.
\end{multline}

\begin{lemma}\label{red4}
Assume that $m_n\to\infty$ and $m_n=o(n)$ as $n\to\infty$. Then
\begin{multline}\label{eq:clt_intermediate_step2}
\Var \Big(\sum_{m_n<p\leq n}\log p \Big(\sum_{k=1}^{m_n} \1_{\{\lambda_p(U_k^{(n)})\geq 1\}}-\1_{\{\max_{1\leq k\leq m_n }\lambda_p(U_k^{(n)})\geq 1\}}\Big)\Big)\\
=O(m_n\log m_n), \quad n\to\infty.
\end{multline}
\end{lemma}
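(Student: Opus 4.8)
The first step is to recognize the inner expression as $(N_p-1)_+$ for the divisibility count $N_p:=\sum_{k=1}^{m_n}\1_{\{\lambda_p(U_k^{(n)})\ge 1\}}=\#\{k\le m_n:\ p\mid U_k^{(n)}\}$. Indeed $\1_{\{\max_{1\le k\le m_n}\lambda_p(U_k^{(n)})\ge 1\}}=\1_{\{N_p\ge 1\}}$, so the bracketed quantity equals $W_p:=N_p-\1_{\{N_p\ge 1\}}=(N_p-1)_+$. Here $N_p$ has the $\mathrm{Binomial}(m_n,q_p)$ law with $q_p:=\P\{p\mid U^{(n)}\}=n^{-1}\lfloor n/p\rfloor\le 1/p$, so $\E N_p\le m_n/p<1$ throughout the summation range $m_n<p\le n$. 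Since $\E\sum_{m_n<p\le n}\log p\,W_p$ is in general of order $m_n$, one cannot bound the variance merely by the second moment; genuine cancellation is needed. I would therefore write $\Var\big(\sum_{m_n<p\le n}\log p\,W_p\big)=\sum_{m_n<p\le n}\log^2 p\,\Var(W_p)+\sum_{p\ne q}\log p\log q\,\Cov(W_p,W_q)$ (the second sum over distinct primes $p,q\in(m_n,n]$) and estimate the two parts separately.

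\textbf{Diagonal part.} Using the elementary inequalities $W_p^2=(N_p-1)_+^2\le N_p(N_p-1)$ and $\E N_p(N_p-1)=m_n(m_n-1)q_p^2\le m_n^2p^{-2}$, one gets $\sum_{m_n<p\le n}\log^2 p\,\Var(W_p)\le m_n^2\sum_{p>m_n}p^{-2}\log^2 p$. By \eqref{pnt_integral} with $f(x)=x^{-2}\log^2 x$ and an integration by parts, $\sum_{p>m_n}p^{-2}\log^2 p=O(m_n^{-1}\log m_n)$, so the diagonal contributes $O(m_n\log m_n)$.

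\textbf{Off-diagonal part (the crux).} The dependence between $N_p$ and $N_q$ stems only from a single $U_k^{(n)}$ possibly being divisible by both $p$ and $q$, and it is weak: $|\P\{p\mid U^{(n)},q\mid U^{(n)}\}-q_pq_q|=|q_{pq}-q_pq_q|\le 1/(pq)$, where $q_{pq}:=n^{-1}\lfloor n/(pq)\rfloor$. Fix distinct $p,q\in(m_n,n]$. Enlarging the probability space, for each $k$ I would construct $\widehat\xi_k^{(q)}$ with the $\mathrm{Bernoulli}(q_q)$ distribution which is independent of $\1_{\{p\mid U_k^{(n)}\}}$ and is built from $U_k^{(n)}$ and independent auxiliary randomness only, with $\P\{\widehat\xi_k^{(q)}\ne\1_{\{q\mid U_k^{(n)}\}}\}\le 2|q_{pq}-q_pq_q|\le 2/(pq)$ (maximal coupling of the conditional law of $\1_{\{q\mid U_k^{(n)}\}}$ given $\1_{\{p\mid U_k^{(n)}\}}$ with its unconditional law); since the blocks $U_1^{(n)},\dots,U_{m_n}^{(n)}$ are independent, the vector $(\widehat\xi_k^{(q)})_k$ is then jointly independent of $(\1_{\{p\mid U_k^{(n)}\}})_k$. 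Put $\widehat N_q:=\sum_{k=1}^{m_n}\widehat\xi_k^{(q)}$ and $\widehat W_q:=(\widehat N_q-1)_+$. Then $\widehat W_q$ is independent of $W_p$ and $\widehat W_q\eqdistr W_q$, so $\Cov(W_p,W_q)=\E[W_p(W_q-\widehat W_q)]$; using $1$-Lipschitzness of $x\mapsto(x-1)_+$ and $|N_q-\widehat N_q|\le\Delta_q:=\sum_{k=1}^{m_n}\1_{\{\widehat\xi_k^{(q)}\ne\1_{\{q\mid U_k^{(n)}\}}\}}$ gives $|\Cov(W_p,W_q)|\le\E[W_p\,\Delta_q]$. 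Bounding $W_p\le N_p(N_p-1)=\sum_{j\ne j'}\1_{\{p\mid U_j^{(n)}\}}\1_{\{p\mid U_{j'}^{(n)}\}}$ and splitting $\E\big[W_p\,\1_{\{\widehat\xi_k^{(q)}\ne\1_{\{q\mid U_k^{(n)}\}}\}}\big]$ according to whether $k\in\{j,j'\}$, independence across blocks together with $\P\{\widehat\xi_k^{(q)}\ne\1_{\{q\mid U_k^{(n)}\}}\}\le 2/(pq)$ and $m_nq_p<1$ yields $\E[W_p\,\Delta_q]\le{\rm const}\cdot m_n^2 p^{-2}q^{-1}$; by symmetry in $p,q$, $|\Cov(W_p,W_q)|\le{\rm const}\cdot m_n^2(pq)^{-3/2}$. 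Finally \eqref{pnt_integral} with $f(x)=x^{-3/2}\log x$ gives $\sum_{p\ne q}\log p\log q\,|\Cov(W_p,W_q)|\le{\rm const}\cdot m_n^2\big(\sum_{p>m_n}p^{-3/2}\log p\big)^2={\rm const}\cdot m_n^2\cdot O(m_n^{-1})=O(m_n)$. Adding this to the diagonal bound proves \eqref{eq:clt_intermediate_step2}.

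\textbf{Main obstacle.} The only delicate point is the coupling: one must keep $\widehat\xi_k^{(q)}$ exactly independent of all the $p$-divisibilities while preserving its $\mathrm{Bernoulli}(q_q)$ marginal, since it is precisely this that makes $\E[W_q-\widehat W_q]=0$ and $\Cov(W_p,\widehat W_q)=0$, so that only the small per-block disagreement probability survives, weighted against $W_p\le N_p(N_p-1)$ (whose mean is $O(m_n^2 p^{-2})$). A naive bound of the variance by the second moment fails here because both $\E[W_pW_q]$ and $\E W_p\,\E W_q$ are of order $m_n^2$ and nearly cancel, and it is the coupling that extracts this cancellation quantitatively.
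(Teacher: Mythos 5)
Your proof is correct, and while its skeleton (recognizing the summand as $(N_p-1)_+$, splitting the variance into a diagonal part and a sum of covariances, and bounding the diagonal by $m_n^2\sum_{p>m_n}p^{-2}\log^2 p=O(m_n\log m_n)$) coincides with the paper's, your treatment of the off-diagonal part is genuinely different. The paper computes $\Cov\big((N_p-1)_+,(N_q-1)_+\big)$ \emph{exactly}: it expands it into four covariances of sums and maxima of indicators, inserts the explicit divisibility probabilities, and arrives at the closed form
$m_n\Delta\,(1-(1-a)^{m_n-1})(1-(1-b)^{m_n-1})+\sum_{k=2}^{m_n}\binom{m_n}{k}\Delta^k(1-a)^{m_n-k}(1-b)^{m_n-k}$
with $\Delta=n^{-1}\lfloor n/(pq)\rfloor-n^{-2}\lfloor n/p\rfloor\lfloor n/q\rfloor$, which it then bounds termwise via $|\Delta|\le (pq)^{-1}$ and $1-(1-a)^{m_n-1}\le m_n/p$. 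You instead run a per-block maximal coupling that replaces $N_q$ by an independent binomial copy $\widehat N_q$ disagreeing with $N_q$ on an expected $O(m_n/(pq))$ number of blocks, converting the covariance into the cross term $\E[W_p\Delta_q]$, after which block independence finishes the estimate; your bounds $\E[W_p\Delta_q]\le{\rm const}\cdot m_n^2p^{-2}q^{-1}$ (using $m_n<p$ for the triples with $k\notin\{j,j'\}$), the symmetrized $m_n^2(pq)^{-3/2}$, and the resulting $O(m_n)$ for the covariance sum all check out and match the order of the paper's $O(m_n)+O(1)$. Both arguments ultimately exploit the same smallness $|\Delta|\le(pq)^{-1}$; the paper's identity is sharper and needs no extra randomization, while your coupling is softer and more portable but hinges on exactly the point you flag: $\widehat\xi_k^{(q)}$ must be independent of the \emph{entire} vector of $p$-divisibilities while retaining the ${\rm Bernoulli}(q_q)$ marginal, which your conditional maximal coupling combined with independence across blocks does deliver (the disagreement probability $2|\Delta|$ is the correct total variation cost). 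The lemma follows either way.
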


For the proof we need a technical result.
\begin{lemma}\label{lem:binomial}
Let ${\rm Bin}(m,\theta)$  be a random variable having a binomial distribution with parameters $m\in\N$ and $\theta \in (0,\,1)$, that is,
$$\mmp\{{\rm Bin}(m,\theta)=k\}=\binom{m}{k}\theta^k(1-\theta)^{m-k},\quad k=0,1,\ldots, m.$$ Then
$$\Var (({\rm Bin}(m,\theta)-1)_+)=\Var ({\rm Bin}(m,\theta)-\1_{\{{\rm Bin}(m,\theta)\geq 1\}})\leq (m\theta)^2.$$
\end{lemma}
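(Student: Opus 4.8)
The plan is to reduce the statement to two easy facts about the binomial law: the claimed identity, which is purely algebraic, and a second-moment estimate obtained from a pointwise domination. First I would record that $({\rm Bin}(m,\theta)-1)_+ = {\rm Bin}(m,\theta) - \1_{\{{\rm Bin}(m,\theta)\geq 1\}}$ holds deterministically: on $\{{\rm Bin}(m,\theta)=0\}$ both sides vanish, and on $\{{\rm Bin}(m,\theta)=k\}$ with $k\geq 1$ both equal $k-1$. Hence there is really just one inequality to establish.

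Write $S:={\rm Bin}(m,\theta)$ and $W:=(S-1)_+$. Since $\Var(W)\leq \E W^2$, it suffices to bound $\E W^2$ by $(m\theta)^2$. The key step is the pointwise chain of inequalities $0\leq W\leq S$ (immediate: $W=0$ if $S=0$, and $W=S-1<S$ otherwise), which gives $W^2\leq SW$. Next I would observe the deterministic identity $SW=S(S-1)_+=S(S-1)$, valid because $S(S-1)$ already vanishes when $S\in\{0,1\}$. Therefore $\E W^2\leq \E[S(S-1)]$, and the right-hand side is the second factorial moment of the binomial distribution, equal to $m(m-1)\theta^2\leq (m\theta)^2=(\E S)^2$ (most transparently via $S(S-1)=\sum_{i\neq j}\xi_i\xi_j$ when $S=\sum_{i=1}^m\xi_i$ with $\xi_i$ i.i.d.\ Bernoulli$(\theta)$). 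Combining, $\Var(W)\leq \E W^2\leq (m\theta)^2$.

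There is no genuine obstacle here; the only point requiring care is to use the factorial moment $\E[S(S-1)]=m(m-1)\theta^2$ rather than expanding $\E S^2=\Var S+(\E S)^2=m\theta(1-\theta)+(m\theta)^2$, which would lose the extra factor of $\theta$ and yield only the useless bound $\Var(W)\leq m\theta(1-\theta)+(m\theta)^2$. This matters because in the intended application (Lemma~\ref{red4}) one takes $\theta$ of order $1/p$ with $p>m_n$, so $m\theta<1$ and the quadratic bound $(m\theta)^2$ is strictly better than the trivial contraction estimate $\Var(W)\leq \Var(S)=m\theta(1-\theta)$; it is precisely this improvement that makes the factorial-moment step indispensable.
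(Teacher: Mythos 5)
Your proof is correct, and it takes a genuinely different route from the paper's. The paper computes $\Var({\rm Bin}(m,\theta)-\1_{\{{\rm Bin}(m,\theta)\geq 1\}})$ exactly by expanding it as $\Var({\rm Bin}(m,\theta))-2\Cov({\rm Bin}(m,\theta),\1_{\{{\rm Bin}(m,\theta)\geq 1\}})+\Var(\1_{\{{\rm Bin}(m,\theta)\geq 1\}})=m\theta(1-\theta)-2m\theta(1-\theta)^m+(1-\theta)^m(1-(1-\theta)^m)$, and then applies the elementary inequality $1-(1-\theta)^m\leq m\theta$ twice to reach $(m\theta)^2$. You instead discard the centering via $\Var(W)\leq \E W^2$, use the pointwise domination $W^2\leq SW=S(S-1)$, and invoke the second factorial moment $\E[S(S-1)]=m(m-1)\theta^2$. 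Your argument is shorter and arguably more transparent about \emph{why} the bound is quadratic in $m\theta$ (it is exactly the factorial-moment phenomenon you highlight at the end, which is indeed the point that makes the lemma useful in the regime $\theta\asymp 1/p$ with $p>m_n$); the paper's computation has the mild advantage of recording the exact variance before estimating, but both routes land on essentially the same sharpness, since the paper's penultimate expression is also bounded by $m(m-1)\theta^2$. Either proof is acceptable.
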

\begin{proof}
This follows from
\begin{align*}
&\hspace{-0.3cm}\Var ({\rm Bin}(m,\theta)-\1_{\{{\rm Bin}(m,\theta)\geq 1\}})\\
&=\Var ({\rm Bin}(m,\theta))-2\Cov({\rm Bin}(m,\theta),\1_{\{{\rm Bin}(m,\theta)\geq 1\}})+\Var (\1_{\{{\rm Bin}(m,\theta)\geq 1\}})\\
&=m\theta(1-\theta) -2m\theta(1-\theta)^m
+(1-\theta)^m(1-(1-\theta)^m)\\
&\leq m\theta(1-\theta)-2m\theta(1-\theta)^m+m\theta(1-\theta)^m\\
&=m\theta(1-(1-\theta)^m)-m\theta^2\leq (m\theta)^2.
\end{align*}
Here, we have used twice the inequality $1-(1-\theta)^m\leq \theta m$. 
\end{proof}

\begin{proof}[Proof of Lemma \ref{red4}]
In view of $$\sum_{k=1}^{m_n}\1_{\{\lambda_p(U_k^{(n)})\geq 1\}}-\1_{\{\max_{1\leq k\leq m_n }\lambda_p(U_k^{(n)})\geq 1\}}=\left(\sum_{k=1}^{m_n}\1_{\{\lambda_p(U_k^{(n)})\geq 1\}}-1\right)_+,$$ relation \eqref{eq:clt_intermediate_step2} is equivalent to
$$ \Var \Big(\sum_{m_n<p\leq n}\log p \Big(\sum_{k=1}^{m_n}\1_{\{\lambda_p(U_k^{(n)})\geq 1\}}-1\Big)_+\Big)=O(m_n\log m_n). 
$$
We represent the left-hand side as follows:
\begin{multline*}
\sum_{m_n<p\leq n}\log^2 p\cdot \Var \Big(\Big(\sum_{k=1}^{m_n}\1_{\{\lambda_p(U_k^{(n)})\geq 1\}}-1\Big)_+\Big)+2\sum_{m_n<p<q\leq n}\log p\log q\times \\
\Cov \Big(\Big(\sum_{k=1}^{m_n}\1_{\{\lambda_p(U_k^{(n)})\geq 1\}}-1\Big)_+,\Big(\sum_{k=1}^{m_n}\1_{\{\lambda_q(U_k^{(n)})\geq 1\}}-1\Big)_+\Big)=:A_1(n)+A_2(n).
\end{multline*}
Since the sum $\sum_{k=1}^{m_n}\1_{\{\lambda_p(U_k^{(n)})\geq 1\}}$ has the binomial distribution with parameters $m_n$ and $\P\{\lambda_p(U_k^{(n)})\geq 1\}=n^{-1}\lfloor n/p \rfloor$, an application of Lemma \ref{lem:binomial} gives
\begin{equation}\label{eq:A_1_estimate}
A_1(n)\leq \sum_{p>m_n}\log^2 p\, (n^{-1} m_n \lfloor n/p \rfloor)^2 \leq m_n^2 \sum_{p>m_n}p^{-2} \log^2 p=O(m_n\log m_n),
\end{equation}
where we have used \eqref{pnt_integral} with $f(x)=x^{-2}\log^2 x$ for the last step.  

Passing to the analysis of $A_2(n)$ we have to estimate the covariance: for prime $p\neq q$,
\begin{align*}
C_n(p,q)&:=\Cov \Big(\Big(\sum_{k=1}^{m_n}\1_{\{\lambda_p(U_k^{(n)})\geq 1\}}-1\Big)_+,\Big(\sum_{k=1}^{m_n}\1_{\{\lambda_q(U_k^{(n)})\geq 1\}}-1\Big)_+\Big)\\
&=\sum_{i,j=1}^{m_n}\Cov \Big(\1_{\{\lambda_p(U_i^{(n)})\geq 1\}},\1_{\{\lambda_q(U_j^{(n)})\geq 1\}}\Big)\\
&-\sum_{i=1}^{m_n}\Cov\Big(\1_{\{\max_{1\leq k\leq m_n }\lambda_p(U_k^{(n)})\geq 1\}},\1_{\{\lambda_q(U_i^{(n)})\geq 1\}}\Big)\\
&-\sum_{j=1}^{m_n}\Cov\Big(\1_{\{\max_{1\leq k\leq m_n }\lambda_q(U_k^{(n)})\geq 1\}},\1_{\{\lambda_p(U_j^{(n)})\geq 1\}}\Big)\\
&+\Cov \Big(\1_{\{\max_{1\leq k\leq m_n }\lambda_p(U_k^{(n)})\geq 1\}},\1_{\{\max_{1\leq k\leq m_n }\lambda_q(U_k^{(n)})\geq 1\}}\Big)\\
&=\sum_{i,j=1}^{m_n}\Cov \Big(\1_{\{\lambda_p(U_i^{(n)})\geq 1\}},\1_{\{\lambda_q(U_j^{(n)})\geq 1\}}\Big)\\
&+\sum_{i=1}^{m_n}\Cov\Big(\1_{\{\max_{1\leq k\leq m_n }\lambda_p(U_k^{(n)})=0\}},\1_{\{\lambda_q(U_i^{(n)})\geq 1\}}\Big)\\
&+\sum_{j=1}^{m_n}\Cov\Big(\1_{\{\max_{1\leq k\leq m_n }\lambda_q(U_k^{(n)})=0\}},\1_{\{\lambda_p(U_j^{(n)})\geq 1\}}\Big)\\
&+\Cov \Big(\1_{\{\max_{1\leq k\leq m_n }\lambda_p(U_k^{(n)})=0\}},\1_{\{\max_{1\leq k\leq m_n }\lambda_q(U_k^{(n)})=0\}}\Big).
\end{align*}
For typographical simplicity we shall use the following abbreviations until the end of the proof of \eqref{eq:clt_intermediate_step2}:
$$ a:=n^{-1}\lfloor n/p\rfloor,\quad b:=n^{-1}\lfloor n/q\rfloor,\quad \Delta:=n^{-1}\lfloor n/(pq)\rfloor-n^{-2}\lfloor n/p\rfloor\lfloor n/q\rfloor.$$
Using \eqref{eq:divisibility_probabilities1} and \eqref{eq:divisibility_probabilities2} we obtain
\begin{align*}
C_n(p,q)&=m_n\Delta-m_n\Delta(1-a)^{m_n-1}-m_n\Delta(1-b)^{m_n-1}\\
&+((1-a)(1-b)+\Delta)^{m_n}-(1-a)^{m_n}(1-b)^{m_n}\\
&=m_n\Delta(1-(1-a)^{m_n-1})(1-(1-b)^{m_n-1}))\\
&+\sum_{k=2}^{m_n}\binom{m_n}{k}\Delta^k (1-a)^{m_n-k}(1-b)^{m_n-k}.
\end{align*}
Therefore,
\begin{align}
|A_2(n)|&=\Big| 2\sum_{m_n<p<q\leq n}\log p\log q \cdot C_n(p,q)\Big|\notag\\
& \leq 2m_n\sum_{m_n<p<q\leq n}|\Delta| \log p\log q\cdot (1-(1-a)^{m_n-1})(1-(1-b)^{m_n-1}))\label{eq:A2_estimate}\\
&+2\sum_{k=2}^{m_n}\binom{m_n}{k}\sum_{m_n<p<q\leq n}\log p \log q |\Delta|^k\notag.
\end{align}
The summands are bounded from above, respectively, by 
\begin{equation*}
2m_n^3\sum_{m_n<p<q\leq n}p^{-2}q^{-2}\log p\log q \leq m_n^3 \Big(\sum_{p>m_n} p^{-2}\log p \Big)^2=O(m_n)
\end{equation*}
and
\begin{align*}
&2\sum_{k=2}^{m_n}\binom{m_n}{k}\sum_{m_n<p<q\leq n}p^{-k}q^{-k} \log p \log q \leq \sum_{k=2}^{m_n}\binom{m_n}{k}\Big(\sum_{p>m_n}p^{-k} \log p\Big)^2\\
&\leq {\rm const}\times \sum_{k=2}^{m_n}\binom{m_n}{k}m_n^{2(1-k)}={\rm const}\times m_n^2 ((1+m_n^{-2})^{m_n}-1-m_n^{-1}) 
=O(1),
\end{align*}
where we have used $|\Delta|\leq (pq)^{-1}$, $a<p^{-1}$, $b<q^{-1}$ and the fact that
$$
\sum_{p>m_n}p^{-k} \log p\leq Cm_n^{1-k},
$$
for some constant $C>0$ which does not depend on $k$. The last estimate is justified by Lemma 7.1 in \cite{AlsKabMar:19}.
\end{proof}

Now we conclude with the help of \eqref{eq:clt_intermediate_step2}, $a_n/(m_n\log m_n)\to\infty$ and Chebyshev's inequality that \eqref{eq:clt_intermediate2} is equivalent to \eqref{eq:clt_intermediate3}.

For $1\leq m<n$ and $k\in\N$, put 
$$
\widetilde{U}^{(n)}_k=\prod_{p\leq n}p^{\1_{\{\lambda_p(U_k^{(n)})\geq 1\}}}\quad\text{and}\quad\widetilde{U}_k^{(n,m)}=\prod_{m<p\leq n}p^{\1_{\{\lambda_p(U_k^{(n)})\geq 1\}}}.
$$
\begin{lemma}\label{lem:limit}
Assume that $m_n\to\infty$ and $m_n=o(n)$ as $n\to\infty$. Then
\begin{equation}\label{eq:clt_intermediate3}
\bigg(a_n^{-1/2}\sum_{k=1}^{\lfloor m_n t\rfloor }\Big(\log \widetilde{U}_k^{(n,m_n)}-\E \log \widetilde{U}_k^{(n,m_n)}\Big)\bigg)_{t\geq 0} \\ \tofd (B(t))_{t\geq 0}
\end{equation}
with $a_n$ defined by \eqref{eq:a_definition}.
\end{lemma}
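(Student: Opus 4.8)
The plan is to read \eqref{eq:clt_intermediate3} as a functional central limit theorem for a row-wise independent triangular array and to establish it via the classical Lyapunov theorem. The starting point is the observation that, for each fixed $n$, the summands
$$\xi_{k,n}:=\log\widetilde{U}_k^{(n,m_n)}-\E\log\widetilde{U}_k^{(n,m_n)},\qquad k\in\N,$$
are independent and identically distributed with zero mean, each equal in law to $\log\widetilde{U}^{(n,m_n)}-\E\log\widetilde{U}^{(n,m_n)}$, since $\widetilde{U}_k^{(n,m_n)}$ is a function of $U_k^{(n)}$ alone. Writing $\sigma_n^2:=\Var(\log\widetilde{U}^{(n,m_n)})$, relation \eqref{eq:an} gives $a_n\sim m_n\sigma_n^2$.

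I would first reduce the finite-dimensional convergence to a one-dimensional statement. Fix $d\in\N$ and $0=t_0<t_1<\cdots<t_d$, and consider the increments $S_{n,j}:=a_n^{-1/2}\sum_{k=\lfloor m_n t_{j-1}\rfloor+1}^{\lfloor m_n t_j\rfloor}\xi_{k,n}$, $j=1,\ldots,d$. These depend on pairwise disjoint blocks of the indices $k$, hence $S_{n,1},\ldots,S_{n,d}$ are independent for every $n$; thus it suffices to prove that $a_n^{-1/2}\sum_{k=\lfloor m_n s\rfloor+1}^{\lfloor m_n t\rfloor}\xi_{k,n}\todistr N(0,t-s)$ for each pair $t>s\ge0$, for then $(S_{n,1},\ldots,S_{n,d})$ converges in law to a vector of independent centered Gaussians with variances $t_1-t_0,\ldots,t_d-t_{d-1}$, and applying the linear (cumulative-sum) map yields that $\bigl(a_n^{-1/2}\sum_{k=1}^{\lfloor m_nt_j\rfloor}\xi_{k,n}\bigr)_{j=1,\ldots,d}$ converges to $(B(t_1),\ldots,B(t_d))$. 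A short computation using \eqref{eq:an} shows that the variance of the block sum, $a_n^{-1}(\lfloor m_n t\rfloor-\lfloor m_n s\rfloor)\sigma_n^2$, tends to $t-s$, so the normalization is correct.

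The analytic heart of the argument is the verification of the Lyapunov condition with exponent $4$ for the block sum, namely $\E\xi_{1,n}^4\big/\big((\lfloor m_n t\rfloor-\lfloor m_n s\rfloor)\sigma_n^4\big)\to0$. Lemma \ref{lem:fourth_moment} furnishes $\E\xi_{1,n}^4=O(\log^4 m_n)$, and Theorems \ref{lem:variance} and \ref{lem:variance2} furnish the asymptotics of $\sigma_n^2$. In case (A) we have $\sigma_n^2\sim 2^{-1}\log^2 m_n$, so the Lyapunov ratio is $O(m_n^{-1})\to0$. In case (B) one must be careful, because $(\log n-\log m_n)(3\log m_n-\log n)$ can be of smaller order than $\log^2 n$; the saving observation is that $m_n>n^{1/2}$ for large $n$ forces $3\log m_n-\log n\ge\tfrac12\log n$, whence $\sigma_n^2\ge{\rm const}\cdot\log n\,\log(n/m_n)$ and the Lyapunov ratio is at most ${\rm const}\cdot\log^2 n\big/\big(m_n\log^2(n/m_n)\big)\le{\rm const}\cdot\log^2 n/m_n\to0$, using $m_n>n^{1/2}$ and $\log(n/m_n)\to\infty$. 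Securing this lower bound for $\sigma_n^2$ in case (B) — making sure the normalization does not degenerate against the fourth moment — is the only step demanding real attention; the rest is bookkeeping resting on results already in place.
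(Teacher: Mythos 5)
Your proposal is correct and follows essentially the same route as the paper: both treat the sum as a row-wise independent array, reduce the functional statement to convergence of (co)variances via \eqref{eq:an} together with a classical CLT condition, and verify that condition using exactly the two ingredients already in place, namely the fourth-moment bound of Lemma \ref{lem:fourth_moment} and the variance asymptotics of Theorems \ref{lem:variance} and \ref{lem:variance2}. The only cosmetic difference is that you check Lyapunov's condition with exponent $4$ (including the correct lower bound $\sigma_n^2\gtrsim\log n\,\log(n/m_n)$ in case (B)), whereas the paper verifies the Lindeberg--Feller condition by Cauchy--Schwarz and Chebyshev from the same moment bounds.
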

\begin{proof}
The left-hand side in \eqref{eq:clt_intermediate3} is the sum of independent random variables. Thus, 
the proof of \eqref{eq:clt_intermediate3} boils down to showing convergence of covariances
\begin{equation}\label{eq:covar}
a_n^{-1}\Cov \Big(\sum_{k=1}^{\lfloor m_n t\rfloor }\log \widetilde{U}^{(n,m_n)}_k,\sum_{k=1}^{\lfloor m_n s\rfloor }\log \widetilde{U}^{(n,m_n)}_k\Big)~\to~ \min (s,t) 
\end{equation}
for $s,t\geq 0$ and checking the Lindeberg--Feller condition, see Theorem 4.12 in \cite{Kallenberg:1997},
\begin{multline}\label{LF}
\E(\log \widetilde{U}^{(n,m_n)}_1-\E\log \widetilde{U}^{(n,m_n)}_1)^2\1_{\{|\log \widetilde{U}^{(n,m_n)}_1-\E\log \widetilde{U}^{(n,m_n)}_1|>\varepsilon \sqrt{a_n}\}}\\
=o(a_n/m_n)
\end{multline}
for all $\varepsilon>0$. Formula \eqref{eq:covar} follows from
\begin{multline*}
\Cov \Big(\sum_{k=1}^{\lfloor m_n t\rfloor }\log \widetilde{U}^{(n,m_n)}_k,\sum_{k=1}^{\lfloor m_n s\rfloor }\log \widetilde{U}^{(n,m_n)}_k\Big)=\Var\Big(\sum_{k=1}^{\lfloor m_n \min(t,s)\rfloor }\log \widetilde{U}^{(n,m_n)}_k\Big)\\
=\lfloor m_n \min(t,s)\rfloor \Var (\log \widetilde{U}^{(n,m_n)}_1)~\sim~\min(t,s)a_n. 
\end{multline*}
Further, we use the Cauchy--Schwartz inequality and the Markov inequality to obtain
\begin{align*}
&\hspace{-0.4cm}\E(\log \widetilde{U}^{(n,m_n)}_1-\E\log \widetilde{U}^{(n,m_n)}_1)^2\1_{\{|\log \widetilde{U}^{(n,m_n)}_1-\E\log \widetilde{U}^{(n,m_n)}_1|>\varepsilon \sqrt{a_n}\}}\\
&\leq \Big(\E(\log \widetilde{U}^{(n,m_n)}_1-\E\log \widetilde{U}^{(n,m_n)}_1)^4\Big)^{1/2}\\
&\hspace{2cm}\times \Big(\P\{|\log \widetilde{U}^{(n,m_n)}_1-\E\log \widetilde{U}^{(n,m_n)}_1|>\varepsilon \sqrt{a_n}\}\Big)^{1/2}\\
&\leq \Big(\E(\log \widetilde{U}^{(n,m_n)}_1-\E\log \widetilde{U}^{(n,m_n)}_1)^4\Big)^{1/2}\varepsilon^{-1} a_n^{-1/2}(\Var(\log \widetilde{U}^{(n,m_n)}_1))^{1/2}\\&=O(m_n^{-1/2}\log^2 m_n)=o(1)
\end{align*}
which proves \eqref{LF} because $a_n/m_n\to\infty$. Here, the next to the last equality is justified by Lemma \ref{lem:fourth_moment} and \eqref{eq:an} .

We note in passing that \eqref{LF} holds trivially whenever
\begin{equation}\label{limi}
\lim_{n\to\infty}(a_n/\log^2 n)=\infty
\end{equation}
which is particularly the case when $m_n$ grows faster than $\log^2 n$. Observe that $|\log \widetilde{U}^{(n,m_n)}_1-\me \log \widetilde{U}^{(n,m_n)}_1|\leq 2\log n$ a.s.\ as a consequence of $\log \widetilde{U}^{(n,m_n)}_1\leq \log U_1^{(n)}\leq \log n$ a.s. Thus, under \eqref{limi}, the indicator in \eqref{LF} is equal to $0$ for large $n$, whence \eqref{LF}.
\end{proof}

The proof of Theorem \ref{thm:main_result2} is complete.

\section{Proof of Theorem \ref{thm:main_result1}}\label{sec:proof_2}

\noindent {\sc Proof of part (i).} 
We start by noting that whenever $m_n=o(n^{1/2})$, Theorem \ref{thm:main_result2}  immediately implies Theorem \ref{thm:main_result1} because
\eqref{eq:tau_to_m_convergence} yields
$$
\lim_{n\to\infty}\P\{\log\LCM(B_n(\lfloor m_n t\rfloor))=Y_n(\lfloor m_n t\rfloor)\}=1
$$
for each $t\geq 0$.

In the situation that $m_n\neq o(n^{1/2})$ our proof relies on the following inequality
\begin{equation}\label{eq:lipschitz}
|Y_n(k)-Y_n(l)|\leq |k-l|\log n \quad \text{a.s.}
\end{equation}
To prove it, we assume, without loss of generality, that $k>l$. Using the fact that, for
finite sets $A,B\subset \N$,
$$
\LCM(A\cup B)\leq \LCM(A)\times \LCM(B),
$$
we conclude that
\begin{multline*}
0\leq Y_n(k)-Y_n(l)=\log \LCM(U^{(n)}_1,U^{(n)}_2,\ldots,U^{(n)}_k)-\log \LCM(U^{(n)}_1,U^{(n)}_2,\ldots,U^{(n)}_l)\\
\leq \log \LCM(U^{(n)}_{l+1},U^{(n)}_2,\ldots,U^{(n)}_k)\leq \log \prod_{j=l+1}^k U_j^{(n)}\leq (k-l)\log n.
\end{multline*}

Applying \eqref{eq:lipschitz} with $k=\tau^{(n)}(\lfloor m_n t\rfloor)$ and $l=\lfloor m_n t\rfloor$ (note that $k\geq l$ in this case) and taking expectations we arrive at
\begin{multline*}
\frac{\E (Y_n(\tau^{(n)}(\lfloor m_n t\rfloor))-Y_n(\lfloor m_n t\rfloor))}{\sqrt{m_n}\log m_n}\leq \frac{\E (\tau^{(n)}(\lfloor m_n t\rfloor)-\lfloor m_n t\rfloor)\log n}{\sqrt{m_n}\log m_n}\\
=O\Big(\frac{m_n^{3/2}\log n}{n \log m_n}\Big),
\end{multline*}
where the last equality follows from \eqref{eq:tau_expansion0}. It remains to note that in case (A) $m_n\leq n^{1/2}$ for all sufficiently large $n$ and $m_n\to\infty$, so that the right-hand side of the last centered formula converges to zero in view of
$$\frac{m_n^{3/2}\log n}{n\log m_n}\leq n^{-1/4}\log n.$$
Thus,
$$
0\leq \frac{Y_n(\tau^{(n)}(\lfloor m_n t\rfloor))-Y_n(\lfloor m_n t\rfloor)}{\sqrt{m_n}\log m_n}\toprobab 0
$$
which in combination with part (i) of Theorem \ref{thm:main_result2} 
proves  part (i) of Theorem \ref{thm:main_result1}.

\noindent {\sc Proof of part (ii)}.
We appeal to \eqref{eq:lipschitz} once again but now with $k=\tau^{(n)}(\lfloor m_n t\rfloor)$ and $l=\lfloor \E \tau^{(n)}(\lfloor m_n t\rfloor) \rfloor$. This gives
\begin{align*}
&\hspace{-1cm}\E |Y_n(\tau^{(n)}(\lfloor m_n t\rfloor))-Y_n(\lfloor \E \tau^{(n)}(\lfloor m_n t\rfloor) \rfloor)|\\
&\leq \log n\, \E |\tau^{(n)}(\lfloor m_n t\rfloor)-\lfloor \E \tau^{(n)}(\lfloor m_n t\rfloor) \rfloor|\\
&\leq \log n + \log n (\Var ( \tau^{(n)}(\lfloor m_n t\rfloor)))^{1/2}=O(n^{-1/2} m_n\log n),
\end{align*}
where we have used \eqref{eq:tau_expansion0} and $m_n>n^{1/2}$ for all large enough $n$. Therefore,
$$
a_n^{-\frac 12}\big(Y_n(\tau^{(n)}(\lfloor m_n t\rfloor))-Y_n(\lfloor \E \tau^{(n)}(\lfloor m_n t\rfloor) \rfloor)\big) \toprobab 0,
$$
by Markov's inequality as a consequence of
$$ \frac{m_n^2\log^2n}{n a_n} =  \frac{2 m_n\log^2 n}{n (\log n-\log m_n)(3\log m_n-\log n)}\leq \frac{4 m_n \log n}{n}\frac{1}{\log(n/m_n)}.$$
The first term is bounded because $m_n=O(n(\log n)^{-1})$ by assumption and the second tends to zero in view of $m_n=o(n)$.

It remains to prove the convergence
\begin{equation}\label{eq:CLT_case_B_proof1}
\Big(a_n^{-1/2}\big(Y_n(\lfloor \E \tau^{(n)}(\lfloor m_n t\rfloor) \rfloor)-c_n(-n\log (1-(m_n t)/n)))\big)\Big)_{t\geq 0}\tofd (B(t))_{t\geq 0}.
\end{equation}
Since $\lfloor \E \tau^{(n)}(\lfloor m_n t\rfloor) \rfloor=O(m_n)$, Lemma  \ref{lem:red1} implies that
$$
\E \Big(\sup_{t\in [0,\,T]}\Big(Y_n(\lfloor \E \tau^{(n)}(\lfloor m_n t\rfloor) \rfloor)-Z_n(m_n^{-1}\E \tau^{(n)}(\lfloor m_n t\rfloor))\Big)\Big)=O(m_n^{1/2})
$$
for every fixed $T>0$, where the definition of $Z_n$, 
see \eqref{eq:z_n_definition}, has to be recalled.

Thus, \eqref{eq:CLT_case_B_proof1} follows if we can check
\begin{equation}\label{eq:CLT_case_B_proof2}
\Big(a_n^{-1/2} \big( Z_n(m_n^{-1}\E \tau^{(n)}(\lfloor m_n t\rfloor))-c_n(-n\log (1-(m_n t)/n)))\big) \Big)_{t\geq 0}\tofd (B(t))_{t\geq 0}.
\end{equation}
For fixed $n$, the function $c_n$ defined in \eqref{eq:cn} is increasing and subadditive on $(0,\infty)$, whence 
\begin{multline*}
0\leq c_n(x+y)-c_n(x) \leq c_n(y) \leq \sum_{p\leq n} \log p (1-(1-n^{-1}\lfloor n/p \rfloor)^y)\\
\leq y\sum_{p\leq n}p^{-1}\log p \leq {\rm const}\times y\log n
\end{multline*}
and, see \eqref{eq:tau_expectation},
$$
\lfloor \E \tau^{(n)}(\lfloor m_n t\rfloor)\rfloor =\lfloor n(H_n-
H_{n- \lfloor m_nt\rfloor })\rfloor
=-n\log(1-(m_n t)/n)+O(1). 
$$
Therefore, the equality $\E Z_n(t)=c_n(\lfloor m_n t\rfloor)$ shows that convergence \eqref{eq:CLT_case_B_proof2} is equivalent to
\begin{equation}\label{eq:CLT_case_B_proof3}
\left( a_n^{-\frac 12} \big(Z_n(m_n^{-1}\E \tau^{(n)}(\lfloor m_n t\rfloor))-\E Z_n(m_n^{-1}\E \tau^{(n)}(\lfloor m_n t\rfloor))\big)\right)_{t\geq 0}\tofd (B(t))_{t\geq 0}.
\end{equation}
Using once again the estimate $\lfloor \E \tau^{(n)}(\lfloor m_n t\rfloor) \rfloor=O(m_n)$, we can apply Lemmas \ref{red3} and \ref{red4} to deduce that \eqref{eq:CLT_case_B_proof3} is implied by
\begin{equation}\label{eq:CLT_case_B_proof4}
\Big( a_n^{-1/2} \times \sum_{k=1}^{\lfloor \E \tau^{(n)}(\lfloor m_n t\rfloor)\rfloor}(\log \widetilde{U}_k^{(n,m_n)}-\E\log \widetilde{U}_k^{(n,m_n)})\Big)_{t\geq 0}\tofd (B(t))_{t\geq 0}.
\end{equation}
The latter relation follows from Lemma \ref{lem:limit} and 
\begin{multline*}
\Var \Big(\sum_{k=\lfloor m_nt \rfloor+1}^{\lfloor \E \tau^{(n)}(\lfloor m_n t\rfloor)\rfloor}(\log \widetilde{U}_k^{(n,m_n)}-\E\log \widetilde{U}_k^{(n,m_n)})\Big)\\
\leq (\E \tau^{(n)}(\lfloor m_n t\rfloor)-\lfloor m_nt \rfloor) \Var (\log \widetilde{U}_1^{(n,m_n)})=O(n^{-1} m_n^2 \Var (\log \widetilde{U}_1^{(n,m_n)}))
\end{multline*}
which, in view of Chebyshev's inequality, \eqref{eq:an} and $m_n=o(n)$, 
ensures 
$$
a_n^{-1/2}\times \sum_{k=\lfloor m_nt \rfloor+1}^{\lfloor \E \tau^{(n)}(\lfloor m_n t\rfloor)\rfloor}(\log \widetilde{U}_k^{(n,m_n)}-\E\log \widetilde{U}_k^{(n,m_n)})\toprobab 0.
$$

\section{Appendix}\label{sec:appendix}

We intend to show that the normalization in a limit theorem for $\widehat{Y}_n(\lfloor m_n t\rfloor)$ defined in \eqref{eq:y_n_geom_def} is different from that in Theorem \ref{thm:main_result2} unless $\lim_{n\to\infty}(\log n/\log m)=1$. We confine ourselves with the one-dimensional convergence.
\begin{proposition}\label{prop:geom}
Assume that $m_n\to\infty$ and $m_n=o(n)$ as $n\to\infty$. Then
\begin{equation}\label{eq:geom_lt}
\frac{\widehat{Y}_n(m_n)-\sum_{p\leq n}\log p(1-(1-p^{-1})^{m_n})}{\sqrt{2^{-1}m_n(\log^2 n-\log^2 m_n)}}~\todistr~ B(1),
\end{equation}
where $B(1)$ has a standard normal distribution.
\end{proposition}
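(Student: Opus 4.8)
The random variable $\widehat{Y}_n(m_n)=\sum_{p\le n}\log p\cdot\max_{1\le k\le m_n}\mathcal{G}_k(p)$ is already a sum of independent terms indexed by the primes $p\le n$, so a central limit theorem ought to follow from the Lindeberg--Feller theorem; the snag is that the summands are unbounded. The plan is to follow the scheme of the proof of Theorem~\ref{thm:main_result2}: first replace the multiplicities $\max_{1\le k\le m_n}\mathcal{G}_k(p)$ by the indicators $\1_{\{\max_{1\le k\le m_n}\mathcal{G}_k(p)\ge 1\}}$, i.e.\ approximate $\widehat{Y}_n(m_n)$ by
\[
\widehat{W}_n:=\sum_{p\le n}\log p\cdot\1_{\{\max_{1\le k\le m_n}\mathcal{G}_k(p)\ge 1\}},\qquad \E\widehat{W}_n=\sum_{p\le n}\log p\,(1-(1-p^{-1})^{m_n}),
\]
so that the centering of $\widehat{W}_n$ equals the centering in \eqref{eq:geom_lt}; and then apply a central limit theorem to the sum of the \emph{bounded} independent variables $\log p\cdot\1_{\{\max_{1\le k\le m_n}\mathcal{G}_k(p)\ge 1\}}$. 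Throughout put $b_n:=\sqrt{2^{-1}m_n(\log^2 n-\log^2 m_n)}$ and $q_p:=\mmp\{\max_{1\le k\le m_n}\mathcal{G}_k(p)\ge 1\}=1-(1-p^{-1})^{m_n}$, and note that $m_n=o(n)$ forces $\log n-\log m_n\to\infty$ and hence $\log^2 n-\log^2 m_n\to\infty$.

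The reduction amounts to estimating the discrepancy $D_n:=\widehat{Y}_n(m_n)-\widehat{W}_n=\sum_{p\le n}\log p\,(\max_{1\le k\le m_n}\mathcal{G}_k(p)-1)_+$, itself a sum of independent nonnegative variables, and I would show $\E D_n=O(m_n^{1/2})$ and $\Var D_n=O(m_n^{1/2}\log m_n)$. Both bounds are obtained by splitting the primes at $p=m_n^{1/2}$ exactly as in the proof of Lemma~\ref{lem:red1}: for $p>m_n^{1/2}$ one uses $(\max_{1\le k\le m_n}\mathcal{G}_k(p)-1)_+\le\sum_{k=1}^{m_n}(\mathcal{G}_k(p)-1)_+$ and the geometric tails to get $\E(\cdots)_+\le 2m_np^{-2}$ and $\Var(\cdots)_+\le\E\big(\sum_{k=1}^{m_n}(\mathcal{G}_k(p)-1)_+\big)^2={\rm const}\cdot(m_np^{-2}+m_n^2p^{-4})\le{\rm const}\cdot m_np^{-2}$; for $p\le m_n^{1/2}$ one uses the identity $\max_{1\le k\le m_n}\mathcal{G}_k(p)\eqdistr\lfloor\log^{-1}p\,\max_{1\le k\le m_n}\mathcal{E}_k\rfloor$ with $\E\max_{1\le k\le m_n}\mathcal{E}_k=H_{m_n}$ and $\E(\max_{1\le k\le m_n}\mathcal{E}_k)^2=H_{m_n,2}+H_{m_n}^2$. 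Summing $\log p$ (resp.\ $\log^2 p$) against these estimates and invoking \eqref{pnt} and \eqref{pnt_integral} yields the claimed orders. Since $\log^2 n-\log^2 m_n\ge(\log n-\log m_n)\log m_n$ with $\log n-\log m_n\to\infty$, one has $m_n\log m_n=o\big(m_n(\log^2 n-\log^2 m_n)\big)=o(b_n^2)$, so $\E D_n=o(b_n)$ and $\Var D_n=o(b_n^2)$; Chebyshev's inequality then gives $b_n^{-1}(D_n-\E D_n)\toprobab 0$ and $b_n^{-1}\E D_n\to 0$, whence \eqref{eq:geom_lt} reduces to proving $(\widehat{W}_n-\E\widehat{W}_n)/b_n\todistr B(1)$.

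For this last step I would first compute $\Var\widehat{W}_n=\sum_{p\le n}\log^2 p\,q_p(1-q_p)$ and show it is asymptotically $b_n^2$, splitting the primes at $p=m_n$. For $p>m_n$ one has $q_p=m_n/p+O(m_n^2/p^2)$, so $q_p(1-q_p)=m_n/p+O(m_n^2/p^2)$, and \eqref{pnt_integral} gives $\sum_{m_n<p\le n}\log^2 p\,q_p(1-q_p)=m_n\int_{m_n}^{n}(\log x/x)\,{\rm d}x+O\big(m_n^2\sum_{p>m_n}p^{-2}\log^2 p\big)=2^{-1}m_n(\log^2 n-\log^2 m_n)+O(m_n\log m_n)$. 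For $p\le m_n$ one has $q_p(1-q_p)\le(1-p^{-1})^{m_n}\le e^{-m_n/p}$, and splitting once more at $p=m_n/\log m_n$ (below it $e^{-m_n/p}\le m_n^{-1}$ while $\sum_{p\le m_n}\log^2 p=O(m_n\log m_n)$ by \eqref{pnt_integral}, and the $O(m_n/\log m_n)$ primes in $(m_n/\log m_n,m_n]$ each contribute at most $\log^2 m_n$) gives $\sum_{p\le m_n}\log^2 p\,q_p(1-q_p)=O(m_n\log m_n)=o(b_n^2)$; hence $\Var\widehat{W}_n\sim b_n^2$. It then remains to verify a Lindeberg/Lyapunov condition, and for this it is enough to know $\log n=o(b_n)$. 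This holds because $b_n^2/\log^2 n\ge 2^{-1}m_n(\log n-\log m_n)/\log n$, and, writing $\alpha_n:=\log m_n\to\infty$ and $\beta_n:=\log(n/m_n)\to\infty$ (the latter by $m_n=o(n)$), the ratio $(\log n-\log m_n)/\log n=\beta_n/(\alpha_n+\beta_n)$ is at least $\min\{1/2,(2\alpha_n)^{-1}\}$ once $\beta_n\ge 1$, so $b_n^2/\log^2 n\ge m_n/(4\log m_n)\to\infty$. Consequently, since every centered summand $\log p\,(\1_{\{\max_{1\le k\le m_n}\mathcal{G}_k(p)\ge 1\}}-q_p)$ has modulus at most $\log p\le\log n$, the third‑moment Lyapunov ratio satisfies $b_n^{-3}\sum_{p\le n}\E|\log p\,(\1_{\{\max_{1\le k\le m_n}\mathcal{G}_k(p)\ge 1\}}-q_p)|^3\le b_n^{-3}\sum_{p\le n}\log^3 p\,q_p(1-q_p)\le b_n^{-3}\log n\cdot\Var\widehat{W}_n\sim\log n/b_n\to 0$, and Theorem~4.12 in \cite{Kallenberg:1997} together with $\Var\widehat{W}_n\sim b_n^2$ yields $(\widehat{W}_n-\E\widehat{W}_n)/b_n\todistr B(1)$. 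This, combined with the reduction above, completes the proof.

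The only place I expect to need genuine care is the variance bookkeeping: one must verify both that $D_n$ is negligible and that $\Var\widehat{W}_n$ has the precise first‑order asymptotics $2^{-1}m_n(\log^2 n-\log^2 m_n)$, and these rest on two elementary but non‑obvious facts valid under the sole hypotheses $m_n\to\infty$ and $m_n=o(n)$, namely $m_n\log m_n=o\big(m_n(\log^2 n-\log^2 m_n)\big)$ and $b_n/\log n\to\infty$. In particular the contribution of the small primes $p\le m_n$ to $\Var\widehat{W}_n$ must be controlled via the exponential decay of $1-q_p=(1-p^{-1})^{m_n}$, not through the crude bound $1-(1-x)^{m_n}\le m_nx$, which would be too lossy here. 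Everything else is routine and parallels the machinery already developed for Theorem~\ref{thm:main_result2}.
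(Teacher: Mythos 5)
Your proposal is correct and follows essentially the same route as the paper: reduce $\widehat{Y}_n(m_n)$ to the indicator sum $\sum_{p\le n}\log p\cdot\1_{\{\max_k\mathcal{G}_k(p)\ge1\}}$ via a Lemma~\ref{lem:red1}-type first-moment bound $O(m_n^{1/2})$, show the variance of that sum is $\sim 2^{-1}m_n(\log^2n-\log^2m_n)$ by splitting the primes at $m_n$, and conclude with Lindeberg--Feller using $\log n=o(b_n)$. The only (harmless) differences are cosmetic: the paper dispenses with your variance bound on $D_n$ by applying Markov to the nonnegative difference, and it handles the primes $p\le m_n$ in the variance with the trivial bound $q_p(1-q_p)\le1$ rather than your finer splitting at $m_n/\log m_n$.
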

\begin{proof}
Repeating verbatim the argument used in the proof of Lemma \ref{lem:red1} we obtain a counterpart of the limit relation stated in that lemma 
$$\E \sum_{p\leq n} \log p \left(\max_{1\leq k\leq \lfloor m_n t\rfloor}\mathcal{G}_k(p)-\1_{\{\max_{1\leq k\leq \lfloor m_n t\rfloor}\mathcal{G}_k(p)\geq 1\}}\right)=O(m_n^{1/2}).$$
Thus, it is enough to prove \eqref{eq:geom_lt} with $\widehat{Y}_n(m_n)$ replaced by
$$\widehat{Z}_n(1)=\sum_{p\leq n}\log p \cdot\1_{\{\max_{1\leq k\leq m_n }\mathcal{G}_k(p)\geq 1\}}.$$ The centering in \eqref{eq:geom_lt} is just the expectation of $\widehat{Z}_n(1)$. To calculate the variance we argue as follows
\begin{align*}
\Var(Z_n(1))&=\sum_{p\leq n}\log^2 p (1-(1-p^{-1})^{m_n})(1-p^{-1})^{m_n}\\
&=\sum_{m_n<p\leq n}\log^2 p(1-(1-p^{-1})^{m_n})(1-p^{-1})^{m_n}+O(m_n\log m_n)\\
&=\sum_{m_n<p\leq n}\log^2 p(1-(1-p^{-1})^{m_n})+O(m_n\log m_n),
\end{align*}
where we have used \eqref{pnt_integral} to obtain $$\sum_{p\leq m_n}\log^2 p (1-(1-p^{-1})^{m_n})(1-p^{-1})^{m_n}\leq \sum_{p\leq m_n}\log^2 p=O(m_n\log m_n)$$ and $$\sum_{m_n<p\leq n}\log^2 p(1-(1-p^{-1})^{m_n})^2\leq m_n^2\sum_{m_n<p}p^{-2} \log^2 p =O(m_n\log m_n).$$
Further,
\begin{multline*}
m_n\sum_{m_n<p\leq n}p^{-1} \log^2 p -2^{-1} m_n(m_n-1)\sum_{m_n<p\leq n}p^{-2} \log^2 p\\
\leq \sum_{m_n<p\leq n}\log^2 p (1-(1-p^{-1})^{m_n})\leq m_n\sum_{m_n<p\leq n}p^{-1} \log^2 p
\end{multline*}
and $$m_n^2\sum_{m_n<p\leq n}p^{-2} \log^2 p=O(m_n\log m_n)$$ by \eqref{eq:A_1_estimate}. It remains to note that $$m_n\sum_{m_n<p\leq n}p^{-1} \log^2 p~\sim~2^{-1}m_n(\log^2 n-\log^2 m_n) 
$$
by \eqref{pnt_integral} and $$ (m_n\log m_n)/ (m_n(\log^2 n-\log^2 m_n)) \leq 1/ \log (n/m_n)~\to~ 0. 
$$ Put $b_n:=2^{-1}m_n(\log^2 n-\log^2 m_n)$ and $$V_p(n):=\1_{\{\max_{1\leq k\leq m_n}\mathcal{G}_k(p)\geq 1\}}-\mmp\left\{\max_{1\leq k\leq m_n} \mathcal{G}_k(p)\geq 1\right\}$$ for $n\in\mn$ and $p\in\mathcal{P}$. The Lindeberg--Feller condition, see Theorem 4.12 in \cite{Kallenberg:1997},
\begin{equation*}
\sum_{p\leq n}\log^2 p\cdot \me V^2_p (n)\1_{\{\log p|V_p(n)|>\varepsilon \sqrt{b_n}\}}=o(b_n)
\end{equation*}
for all $\varepsilon>0$ is trivial because $\log p\,|V_p(n)| \leq \log n$ for $p\leq n$ and $m_n(\log^2 n-\log^2 m_n)$ grows faster than $\log^2 n$. The latter follows from    
$$ m_n(\log^2 n-\log^2 m_n)/\log^2 n\geq m_n\log(n/m_n)/\log n\geq \log (n/m_n) \to\infty$$ in the case $m_n\geq \lfloor \log n\rfloor$ and $m_n=o(n)$ and $$m_n(\log^2 n-\log^2 m_n)/\log^2 n\geq m_n\log(n/m_n)/\log n\sim m_n \to\infty$$ in the case $m_n< \lfloor \log n\rfloor$ and $m_n\to\infty$.
\end{proof}
Now we use Theorem \ref{thm:main_result2} and Proposition \ref{prop:geom} to conclude that $\lim_{n\to\infty}(b_n/a_n)=1$ if, and only if, $\lim_{n\to\infty}(\log n/\log m_n)=1$. Recall that $a_n$ is defined in \eqref{eq:a_definition}.  If, for instance, $m_n\leq n^{1/2}$ and $m_n\to\infty$, then $a_n=2^{-1}m_n\log^2 m_n$ so that $b_n/a_n\geq 3$. In particular, $\lim_{n\to\infty}(b_n/a_n)=\infty$ whenever $\lim_{n\to\infty}(\log n/\log m)=\infty$.

Here is the promised sketch of the proof of Proposition \ref{prop:fixed_m}.
\begin{proof}[Proof of Proposition \ref{prop:fixed_m}]
It is enough to prove the joint convergence
\begin{multline*}
\Big(\Big(\log \LCM (U_1^{(n)},U_2^{(n)},\ldots, U_m^{(n)})-\sum_{k=1}^m \log U_k^{(n)}\Big)_{m\in\N}, (n^{-1} U_m^{(n)})_{m\in\N}\Big)\todistri\\
\Big(\Big(\sum_{p}\log p\cdot\Big(\max_{1\leq k\leq m}\mathcal{G}_k(p)-\sum_{k=1}^{m}\mathcal{G}_k(p)\Big)\Big)_{m\in\N},(U_m)_{m\in\N}\Big).
\end{multline*}
Note that, for every fixed $M\in\N$,
\begin{align*}
&\hspace{-2cm}\log \LCM (U_1^{(n)},U_2^{(n)},\ldots,U_m^{(n)})-\sum_{k=1}^{m}\log U_k^{(n)}\\
&=\sum_{p\leq n}\log p\cdot \Big(\max_{1\leq k\leq m}\lambda_p(U_{k}^{(n)})-\sum_{k=1}^{m}\lambda_p(U_{k}^{(n)})\Big)\\
&=\sum_{p\leq M}\cdots + \sum_{M<p\leq n}\cdots=:W_1(n,m,M)+W_2(n,m,M).
\end{align*}
By the continuous mapping theorem applied to part (ii) of Lemma \ref{lem:conv_to_geom},
\begin{multline*}
\Big((W_1(n,m,M))_{m\in\N}, (n^{-1} U_m^{(n)})_{m\in\N}\Big)\todistri\\
\Big(\Big(\sum_{p\leq M}\log p\cdot\Big(\max_{1\leq k\leq m}\mathcal{G}_k(p)-\sum_{k=1}^{m}\mathcal{G}_k(p)\Big)\Big)_{m\in\N},(U_m)_{m\in\N}\Big)
\end{multline*}
for every fixed $M\in\N$. Further, using Theorem 2.3(i) in \cite{BosMarRas:19} with $f(x)=x$ yields
\begin{multline*}
\lim_{M\to\infty}\sum_{p\leq M}\log p\cdot\Big(\max_{1\leq k\leq m}\mathcal{G}_k(p)-\sum_{k=1}^{m}\mathcal{G}_k(p)\Big)\\
=\sum_{p}\log p\cdot\Big(\max_{1\leq k\leq m}\mathcal{G}_k(p)-\sum_{k=1}^{m}\mathcal{G}_k(p)\Big)\quad\text{a.s.}
\end{multline*}
It remains to invoke Theorem 3.2 in \cite{Billingsley:1999} in conjunction with the relation
$$
\lim_{M\to\infty}\limsup_{n\to\infty}\P\{|W_2(n,m,M)|>\varepsilon\}=0
$$
for every fixed $\varepsilon>0$ and $m\in\N$. For the latter, see formula (22) in \cite{BosMarRas:19}.
\end{proof}

\section*{Acknowledgments}
The work of AI and AM has received funding from the Ulam Program of the Polish National Agency for Academic Exchange (NAWA), projects no. PPN/ULM/ 2019/1/00010/DEC/1 and PPN/ULM/2019/1/00004/DEC/1, respectively. DB was  partially supported by the National Science Center, Poland (grant number  2019/33/ B/ST1/00207).

\end{document}